\documentclass[12pt,reqno]{amsart}
\usepackage[margin=1in]{geometry}
\usepackage{amsmath,amssymb,amsthm,graphicx,amsxtra, setspace}
\usepackage[utf8]{inputenc}
\usepackage{mathrsfs}
\usepackage{hyperref}
\usepackage{upgreek}
\usepackage{mathtools}
\usepackage[dvipsnames]{xcolor}
\usepackage[mathcal]{euscript}
\usepackage{amsmath,units}
\usepackage{pgfplots}
\usepackage{tikz}
\usepackage{verbatim}
\allowdisplaybreaks

\usepackage{fontenc}
\usepackage{textcomp}
\usepackage{marvosym}
\usepackage{eurosym}
\usepackage{dsfont}

\usepackage[pagewise]{lineno}

\DeclareMathAlphabet{\mathpzc}{OT1}{pzc}{m}{it}

\usepackage[cyr]{aeguill}

\colorlet{darkblue}{blue!50!black}

\hypersetup{
	colorlinks,%
	citecolor=blue,%
	filecolor=red,%
	linkcolor=red,%
	urlcolor=blue,%
	pdfnewwindow=true,%
	pdfstartview={FitH}
}

% THEOREM Environments ---------------------------------------------------
%\setlength{\textheight}{19.5 cm} \setlength{\textwi\d th}{12.5 cm}
\newtheorem{theorem}{Theorem}[section]
\newtheorem{lemma}[theorem]{Lemma}
\newtheorem{proposition}[theorem]{Proposition}

\newtheorem{corollary}[theorem]{Corollary}
\newtheorem{definition}[theorem]{Definition}
\newtheorem{example}[theorem]{Example}
\newtheorem{remark}[theorem]{Remark}

\allowdisplaybreaks

\let\originalleft\left
\let\originalright\right
\renewcommand{\left}{\mathopen{}\mathclose\bgroup\originalleft}
\renewcommand{\right}{\aftergroup\egroup\originalright}

% MATH -----------------------------------------------------------
\newcommand{\Tr}{\mathop{\mathrm{Tr}}}
\renewcommand{\d}{\/\mathrm{d}\/}

\def\w{\textbf{W}^{\varepsilon}_{{\theta}^{\varepsilon}}}

\def\e{\varepsilon}

\def\L{\mathbb{L}}
\def\sgn{\mathrm{sgn}}
\def\A{\mathrm{A}}
\def\I{\mathrm{I}}

\def\C{\mathrm{C}}
\def\f{\boldsymbol{f}}
\def\J{\mathrm{J}}
\def\B{\mathrm{B}}
\def\D{\mathrm{D}}
\def\y{\boldsymbol{y}}

\def\Y{\mathrm{Y}}
\def\Z{\mathrm{Z}}
\def\E{\mathbb{E}}
\def\X{\mathbf{X}}
\def\Y{\mathbf{Y}}
\def\Z{\mathbf{Z}}
\def\U{\mathbb{U}}
\def\x{\boldsymbol{x}}

\def\g{\boldsymbol{g}}

\def\z{\boldsymbol{z}}
\def\v{\boldsymbol{v}}
\def\V{\mathbb{v}}
\def\w{\boldsymbol{w}}
\def\W{\mathrm{W}}

\def\Q{\mathrm{Q}}

\def\N{\mathbb{N}}

\def\no{\nonumber}
\def\V{\mathbb{V}}
\def\wi{\widetilde}
\def\Q{\mathrm{Q}}
\def\u{\mathrm{U}}
\def\P{\mathbb{P}}
\def\u{\boldsymbol{u}}
\def\H{\mathbb{H}}

\def\0{\boldsymbol{0}}

\newcommand{\eps}{\varepsilon}

\newcommand{\R}{\mathbb{R}}

\renewcommand{\d}{\/\mathrm{d}\/}

% ----------------------------------------------------------------
\newcommand{\Addresses}{{% additional braces for segregating \footnotesize
		\footnote{
			%	\footnotesize
			
			\noindent \textsuperscript{1,2,3}Department of Mathematics, Indian Institute of Technology Roorkee-IIT Roorkee,
			Haridwar Highway, Roorkee, Uttarakhand 247667, INDIA.\par\nopagebreak
			\noindent  \textit{e-mail:} \texttt{Manil T. Mohan: maniltmohan@ma.iitr.ac.in, maniltmohan@gmail.com.}
			
			\textit{e-mail:} \texttt{Sagar Gautam: sagar\_g@ma.iitr.ac.in.}
			
			\noindent \textsuperscript{*}Corresponding author.
			
			\textit{Key words:} extended Convective Brinkman-Forchheimer equations, monotone operators, approximate controllability, transition semigroup,  irreducibility.
			
			Mathematics Subject Classification (2020): Primary 93B05, 35R15; Secondary 60H30, 37L40, 76D03.
			
			}}}

\begin{document}
	%	\linenumbers
	
	\title[Irreducibility of the transition semigroup for CBFeD  equations]{Approximate controllability and Irreducibility of the transition semigroup associated with Convective Brinkman-Forchheimer extended Darcy Equations
		\Addresses}
	\author[S. Gautam and M. T. Mohan]
	{Sagar Gautam\textsuperscript{1} and Manil T. Mohan\textsuperscript{2*}}
	
	\maketitle
	
		\begin{abstract}
		In this article, the following controlled convective Brinkman-Forchheimer extended Darcy (CBFeD) system is considered in a $d$-dimensional torus $\mathbb{T}^d$:
		\begin{align*}
			\frac{\partial\boldsymbol{y}}{\partial t}-\mu \Delta\boldsymbol{y}+(\boldsymbol{y}\cdot\nabla)\boldsymbol{y}+\alpha\boldsymbol{y}+\beta\vert \boldsymbol{y}\vert^{r-1}\boldsymbol{y}+\gamma\vert \boldsymbol{y}\vert ^{q-1}\boldsymbol{y}+\nabla p=\boldsymbol{g}+\boldsymbol{u},\ \nabla\cdot\boldsymbol{y}=0, 
		\end{align*}
		where $d\in\{2,3\}$, $\mu,\alpha,\beta>0$, $\gamma\in\mathbb{R}$, $r,q\in[1,\infty)$ with $r>q\geq 1$ and $\boldsymbol{u}$ is the control. For the super critical ($r>3$) and critical  ($r=3$ with $2\beta\mu>1$) cases, we  first show the approximate controllability of the above system in the usual energy space (divergence-free $\mathbb{L}^2(\mathbb{T}^d)$ space). As an application of the approximate controllability result, we establish the  irreducibility of the transition semigroup associated with stochastic CBFeD system perturbed by non-degenerate Gaussian noise  in the usual energy space by exploiting the regularity of solutions, smooth approximation of the multi-valued map $\mathrm{sgn}(\cdot)$ a density argument and monotonicity properties of the linear and nonlinear operators. 
	\end{abstract}

\section{Introduction}\label{thm}\setcounter{equation}{0}
Mathematical modeling and analysis for the study of fluids are topics of great interest, both for our understanding of the phenomena related to fluids and for applications. The investigation of heat and fluid movement in porous media is of significant importance  to numerous scientific and engineering domains. Many models of porous media, especially those which describe the flow of fluids, are based on \emph{Darcy's law}. It is an empirical law which describes a linear relationship between flow rate and pressure gradient ($\nabla p$), that  is, $$\y_d=-\frac{k}{\nu}\nabla p,$$  where $\y_d$ is the Darcy velocity,  $k$ is the permeability of the fluid, $\nu>0$ is the dynamic viscosity and $p$ is the pressure in the fluid flow (cf. \cite{MTT}). Many investigations demonstrate that as flow rate increases, the relationship changes from linear to nonlinear (see, for instance, oil reservoirs and petroleum development, as well as certain phenomena like radial flow patterns).  In order to describe the high flow rate in porous media, Forchheimer introduced a quadratic term involving velocity into the Darcy law. This correction yields the \emph{Darcy-Forchheimer law}, which takes the form $$\nabla p=-\frac{\nu}{k}\y_f-\gamma\rho_f|\y_f|\y_f,$$ where $\y_f$ is the Forchheimer velocity, $\gamma>0$ is the Forchheimer coefficient and $\rho_f$ is the density (cf. \cite{MTT}). Based upon the \emph{Darcy-Forchheimer law}, the following classical configuration of the convective Brinkman-Forchheimer extended Darcy model was originally derived in the framework of thermal dispersion in a porous medium using the method of volume averaging of the velocity and temperature deviations in the pores:
\begin{equation*}
	\left\{
	\begin{aligned}
		\frac{\partial \y_f}{\partial t}- \mu\Delta\y_f+(\y_f\cdot\nabla)\y_f+a_0\y_f+a_1|\y_f|\y_f+\nabla p&=\g, \\ \nabla\cdot\y&=0.
	\end{aligned}
	\right.
\end{equation*}	
However,  the quadratic nonlinearity in the Forchheimer equation can be further generalized to incorporate some additional nonlinear terms from the mathematical point of view. In fact, at higher flow rates through porous media, the most practical case is described by a \emph{linear and cubic Darcy-Forchheimer law} (see \cite{MTT}), $$\nabla p=-\frac{\nu}{k}\y_f-\gamma\rho_f|\y_f|^2\y_f.$$ On taking into account these nonlinear corrections of Darcy's law, the author in \cite{asch} discussed the following convective Brinkman-Forchheimer extended Dracy model:
\begin{equation*}
	\left\{
	\begin{aligned}
		\frac{\partial \y_f}{\partial t}-\mu\Delta\y_f+ (\y_f\cdot\nabla)\y_f+a_0\y_f+a_1|\y_f|\y_f+a_2|\y_f|^2\y_f+\nabla p&=\g, \\ \nabla\cdot\y&=0.
	\end{aligned}
	\right.
\end{equation*}	
In this work, we consider a further generalization of the convective Brinkman-Forchheimer extended Dracy equations by taking into account a \emph{pumping term} which also have a similar nonlinearity but with the opposite sign. 

\subsection{The Model}
Let us consider a $d$-dimensional torus $\mathbb{T}^d=\left(\frac{\R}{\mathrm{L}\mathbb{Z}}\right)^d$ where $d\in\{2,3\}$ and $L>0$. The convective Brinkman-Forchheimer extended Darcy (CBFeD)  equations describe the motion of  incompressible fluid flows in a saturated porous medium and are  given by
\begin{equation}\label{1}
	\left\{
	\begin{aligned}
		\frac{\partial \y}{\partial t}-\mu\Delta\y+\alpha\y+(\y\cdot\nabla)\y+\beta|\y|^{r-1}\y +\gamma|\y|^{q-1}\y+\nabla p&=\g, \ \text{ in } \ \mathbb{T}^d\times(0,\infty), \\ \nabla\cdot\y&=0, \ \text{ in } \ \mathbb{T}^d\times(0,\infty), \\
		\y(0)&=\y_0 \ \text{ in } \ \mathbb{T}^d,
	\end{aligned}
	\right.
\end{equation}	
where $\y(x,t):\mathbb{T}^d\times(0,\infty)\to\R^d$ represents the velocity field at time $t$ and position $x$, $p(x,t):\mathbb{T}^d\times(0,\infty)\to\R$ denotes the pressure field, $\g(x,t):\mathbb{T}^d\times(0,\infty)\to\R^d$ is an external forcing. Moreover, $\y(\cdot,\cdot)$, $p(\cdot,\cdot)$ and $\g(\cdot,\cdot)$ satisfy the following periodic conditions:
\begin{align}\label{2}
	\y(x+\mathrm{L}e_{i},\cdot) = \y(x,\cdot), \ p(x+\mathrm{L}e_{i},\cdot) = p(x,\cdot) \ \text{ and } \ \g(x+\mathrm{L}e_{i},\cdot) = \g(x,\cdot),
\end{align}
for every $x\in\R^{d}$ and $i=1,\ldots,d,$ where $\{e_{1},\dots,e_{d}\}$ is the canonical basis of $\R^{d}.$ The constant $\mu>0$ denotes the \emph{Brinkman coefficient} (effective viscosity) and the constants $\alpha$ and $\beta$ are due to Darcy-Forchheimer law which are termed as \emph{Darcy} (permeability of porous medium) and \emph{Forchheimer} (proportional to the porosity of the material) coefficients, respectively. The nonlinear term $\gamma|\y|^{q-1}\y$ appearing in \eqref{1} functions as \emph{damping} for $\gamma>0$ and \emph{pumping} for $\gamma<0$.    The absorption exponent $r\in[1,\infty)$ and  $r=3$ is known as the \emph{critical exponent}. For $\gamma=0$, one obtains the convective Brinkman-Forchheimer (CBF) equations (\cite{KWH}).  The critical homogeneous CBF equations (\eqref{1} with  $r=3$ and $\g=\mathbf{0}$)  have the same scaling as Navier-Stokes equations (NSE) only when $\alpha=0$ (\cite{KWH}).  We refer the case $r<3$ as \emph{subcritical} and $r>3$ as \emph{supercritical} (or fast growing nonlinearities). The model is accurate when the flow velocity is too large for Darcy's law to be valid, and apart from that  the porosity is not too small (\cite{MTT}).  If one considers \eqref{1} with $\alpha=\beta=\gamma=0$, then we obtain the \emph{classical NSE}, and if $\alpha, \beta,\gamma>0$, then it can be considered as \emph{damped NSE}. 

\subsection{Literature review} We present a summary of the literature-based research on  the model \eqref{1}, approximation controllability and  irreducibility. 
\subsubsection{The deterministic problem.} The problem \eqref{1} has important applications not only from the physical but also from mathematics perspective.  Due to Leary and Hopf (cf. \cite{Hopf,Leray}), the existence of at least one weak solution satisfying the energy inequality of 3D NSE is known. But the uniqueness is still a challenging open problem for the mathematical community. Therefore,  several mathematicians came up with various modifications of 3D NSE (cf. \cite{ZCQJ,KT2,ZZXW}, etc.). The authors in \cite{SNA1,SNA} introduced NSE modified by an absorption term $\beta|\y|^{r-1}\y$ with $r\geq 1$ and proved the existence of weak solution for any $d\geq2$ and uniqueness for $d=2$. In the literature, modifications of the classical NSE by the damping term $\alpha\y+\beta|\y|^{r-1}\y$ is known as the \emph{convective Brinkman-Forchheimer equations} (cf. \cite{KWH}, etc.). In \cite{MTT}, the authors considered the classical 3D NSE with damping $\beta|\y|^{r-1}\y$ as well as pumping $\gamma|\y|^{q-1}\y$ (without the linear damping $\alpha\y$) and established the existence of weak solutions for $r>q$ and uniqueness for $r>3$. Moreover, they demonstrated the global existence of a unique strong solution for $r>3$ with the initial data in $\mathbb{H}^1$. Similar to 3D NSE, the existence of a unique global (in time) weak solution of 3D CBFeD with $r\in[1,3)$ (for any $\beta,\mu>0$) and $r=3$ (for $2\beta\mu<1$) is also an open problem. 

\subsubsection{The stochastic problem.} 
Let us now discuss the stochastic counterpart of  the system \eqref{1} and related models. The existence of a pathwise unique strong solution for the stochastic tamed NSE (driven by Gaussian), in the whole space as well as in the periodic boundary case, was established in \cite{MRXZ1}. They also proved the existence of a  unique  invariant measure for the corresponding transition semigroup.  Recently, the authors in \cite{ZBGD} improved their results  for a slightly simplified system. In \cite{WL}, it is demonstrated that strong solutions exist and are unique for a wide class of SPDE, where the coefficients satisfy Lyapunov and local monotonicity conditions. The author used the stochastic tamed 3D Navier-Stokes equations as an example.  All the above  works established the existence and uniqueness of strong solutions in the following regularity class (see Subsection \ref{sub2.1}):
$$\C([0,T];\V)\cap\mathrm{L}^2(0,T;\D(\A)),\ \P\text{-a.s.},$$
by assuming $\mu=1$. It appears to us that the model covered in these papers ia a special case of our model with $r=3$, $\mu=\beta=1$ and $\alpha=\gamma=0$, so that the condition $2\beta\mu>1$ is automatically holds (cf. the estimates \eqref{vreg9}, \eqref{vreg10} and \cite[Lemma 2.3]{MRXZ1}).

 Let us now look into the results available in the literature for stochastic CBF equations and related models in bounded domains and torus. By exploiting a monotonicity
 property of the linear and nonlinear operators as well as a stochastic generalization of the
 Minty-Browder technique, the author in \cite{MT4,MT2} established the existence and uniqueness of
 a global strong solution $$\y\in\C([0,T];\H)\cap\mathrm{L}^2(0,T;\V)\cap\mathrm{L}^{r+1}(0,T;\wi\L^{r+1}),\ \P\text{-a.s.},$$ for $\y_0\in\H$,  satisfying the energy equality (It\^o's formula) for stochastic CBF equations (in bounded and torus) driven by multiplicative Gaussian and pure jump noises, respectively, for $d=2,3$ and $r\in[3,\infty)$ $(2\beta\mu\geq1)$ for $d=r=3$. Under suitable assumptions on the initial
data ($\y_0\in\V$) and noise coefficients, the author has also showed the following regularity result: 
\begin{align}\label{13}
	\y\in\C([0,T];\V)\cap\mathrm{L}^2(0,T;\D(\A))\cap\mathrm{L}^{r+1}(0,T;\wi{\L}^{p(r+1)}),\ \P\text{-a.s.},
	\end{align}
where $p\in[2,\infty)$ for $d=2$ and $p=3$ for $d=3$. The author in \cite{MT10} established, using the contraction mapping principle, the existence and uniqueness of local and global pathwise mild solutions for stochastic CBF equations perturbed by an additive L\'evy noise in $\R^d$, for $d=2,3$. For 2D and 3D  stochastic CBF equations perturbed by a multiplicative L\'evy noise, the existence of a weak martingale solution is proved in \cite{MT6}. This is achieved by using a version of the Skorokhod embedding theorem for nonmetric   spaces (for $d=2,3$ and $r\in[1,\infty)$), a compactness method, and the classical Faedo-Galerkin   approximation. Recently the author in \cite{MT8} established the   asymptotic irreducibility, asymptotic strong Feller property and ergodicity  by an application of  the asymptotic log harmonic inequality. Furthermore, for the CBFeD equations \eqref{1}, the author in   \cite{MT7}  showed that the solutions 2D and 3D stochastic CBFeD equations driven by Brownian motion can be approximated by 2D and 3D stochastic CBFeD equations forced by pure jump noise/random kicks on the state space $\D([0,T];\H),$ where $\D([0,T];\H)$ is the space of c\`adl\`ag paths from $[0,T]$ to $\H$. 

\subsubsection{The control problem.}
%\textcolor{red}{Within the mathematical scientific community, it is widely acknowledged that one of the most significant mathematical issues is the rigorous understanding of turbulence and related hydrodynamics questions. The \emph{control of turbulence in a fluid flow} is another related problem, albeit from a more practical standpoint (see \cite{cft,fmrt,RT1} for a comprehensive literature on the control of turbulence). In the latter case, we take into account a flow in a specific physical domain that has a known initial configuration. We then address the problem of figuring out the optimal  course of action to take on the system in order to minimize turbulence within the flow, using a variety of devices such as body forces, boundary values, temperature, etc. \cite{FART,SSS}.}  
Controllability theory for evolution partial differential equations first emerged in the 1960s. Egorov \cite{jve}, Russell \cite{dr1}, and Fattorini \cite{hf} established the basis for this theory. Specifically, these papers developed the moment method, which reduces the exact controllability problem's solution to problems in the theory of exponential series, as well as introduced the duality principle, which reduces the controllability problem for an evolution equation to the adjoint equation's observability problem. In the late 1980's, Lions presented the subject of fluid flow controllability (see \cite{JL}), which refers to how the NSE system can be brought to a desired plausible state, for instance, vanishing velocity, by controlling the flow on a portion of the boundary. He posed the question of whether, for any finite energy initial data, the solution of  NSE system starts at the initial data and then reaches rest. The question was fairly impressive as the answer was not even known in the case of the heat equation. The authors in \cite{avf1,gLLr} demonstrated the first breakthroughs in the heat equation utilising the Carleman estimate associated with elliptic and parabolic operators. 
%The controllability of 2D-Euler's equations has been proven in \cite{jmc8} in suitable spaces. Later, this result was extended to 3D Euler's equations in \cite{og1,og2}.

For many years, researchers have been studying the approximate controllability of NSE with homogeneous Dirichlet boundary conditions on a bounded domain using controls acting on an arbitrarily chosen subdomain or subboundary. The approximate controllability (global) for 2D-NSE with Navier-slip boundary conditions was discussed in \cite{jmc5} by using the so-called \emph{return method}, while the small time exact controllability for 2D-NSE has been proved in \cite{jmc6}, where the domain is a manifold without boundary. Likewise, the authors in \cite{avfoyu}, proved small-time global exact null controllability when the control is supported on the whole boundary (see also \cite{oyu,oyu1}). Furthermore, the authors in \cite{JLEZ} demonstrated that in 2D and 3D-NSE exact controllability could be achieved in a bounded domain employing finite-dimesional Galerkin approximations. 
%The authors in \cite{tss1} proved the local exact internal controllability of steady state solutions for the Navier-Stokes equations in three-dimensional bounded domains with the Navier slip boundary conditions by applying the Carleman approximation for the backward Stokes equation (also see \cite{tss2} for the 2D case ). 
By using low-mode forcing, the authors in \cite{agr} proved the $\mathrm{L}^2$-approximate controllability for 2D-NSE, while for 3D-NSE it has been established in \cite{shir}. For  3D-NSE, the author in \cite{khoff} proved the local approximate controllability on the torus. Further in \cite{shir1}, the author proved the exact controllabilty in projections as well as uniform approximate controllability for 3D-NSE. The global approximate null controllability of 3D NSE by means of a boundary control was proved in \cite{SGOY}. 
%The authors in \cite{CF} prove the approximate controllability for \emph{cut-off} NSE by using the fixed argument of the corresponding linearised system (Stokes system) and unique continuation property. 
Recently, the authors in \cite{sKM1} established feedback stabilization  for the CBFeD system preserving the invariance of a given convex set. Furthermore, in \cite{pkmtm}, the authors examined the local exact controllability to trajectories for CBF equations.

\subsubsection{Irreducibility}
Control theory is a useful tool to study the properties of stochastic dynamical systems. One of the important properties of stochastic dynamic systems is the concept of \emph{irreducibility} of transition semigroup associated to the system. An additional convincing reason for the study of the irreducibility property is its relevance in the analysis of the uniqueness and ergodicity of invariant measures. It is well known that the strong Feller property together with irreducibility ensure the uniqueness of invariant measures for the transition semigroup (see \cite{pjz,gdp2}).  See, for instance, the classical work \cite{jLD} and the books \cite{pjz1,gdp2} for stochastic infinite-dimensional systems. The analysis of irreducibility for semilinear SPDEs with additive or multiplicative noises has been widely explored in the literature. The author in \cite{vbd} proved the irreducibility of the transition semigroup associated with the two-phase Stefan problem (perturbed by a colored noise). The proof relies on a general result of approximate controllability for \emph{maximal monotone} systems (see \cite[Chapter 2]{VB2} for a general introduction on maximal monotone operators). Additionally, \cite{vbd1} demonstrated the irreducibility of a stochastic obstacle problem by employing a similar methodology as in \cite{vbd}. Later, the authors in \cite{VB9} extend this result to a more general class of stochastic differential equations by using the theory of \emph{$m$-accretive operators}. For non-linear stochastic partial differential equations (e.g., stochastic Burgers equations, stochastic porous media equations, and stochastic fast diffusion equations), the author in \cite{SQ} provided sufficient conditions (non-degenracy conditions) such that the Markov semigroup is irreducible and strong Feller. In the context of NSE, the author in \cite{ffL} showed that the 3D-NSE with random forcing is irreducible under the full noise assumption (that is, noise interacts with all modes), and initial data belongs to the Sobolev space $\mathbb{H}^1$.  For more details on ergodicity for 2D and 3D NSE perturbed by degenerate and non-degenerate noises, interested readers are referred to see \cite{AD,ZDYX,WEJC,FFBM,NGHJC,MHJC,MTSKSS,gdp2,GDPAD,MR1,MRLX}, etc. and references therein.

 \subsection{Difficulties, novelties and approaches}
 The main advantage for considering the CBFeD system \eqref{1} in a $d$-dimensional torus is as follows: In the torus $\mathbb{T}^d$, the Helmholtz-Hodge projection $\mathcal{P}$ and $-\Delta$ commutes (cf. \cite[Theorem 2.22]{JCR}). So, the equality 
 \begin{align}\label{3}
 	&\int_{\mathbb{T}^d}(-\Delta \boldsymbol{y}(x))\cdot|\boldsymbol{y}(x)|^{r-1}\boldsymbol{y}(x)\d x\nonumber\\&=\int_{\mathbb{T}^d}|\nabla \boldsymbol{y}(x)|^2|\boldsymbol{y}(x)|^{r-1}\d x +4\left[\frac{r-1}{(r+1)^2}\right]\int_{\mathbb{T}^d}|\nabla|\boldsymbol{y}(x)|^{\frac{r+1}{2}}|^2\d x,
 \end{align}
 is quite useful in obtaining regularity results. It is also noticed in the literature that the above equality may not be useful in  domains other than the whole domain or a $d$-dimensional torus (see \cite{KT2},  etc. for a detailed discussion).   Even though the equations are defined on a torus, we are not assuming the zero mean condition for the velocity field unlike the case of NSE, since the damping term $\beta|\y|^{r-1}\y$ and the pumping term  $\gamma|\y|^{q-1}\y$ ($1<q<r$) do not preserve this property (see \cite{MTT}). This yields that $0$ is an eigenvalue of $-\Delta$, and hence $-\Delta$ is not invertible. As a result, we cannot use the well-known Poincar\'e inequality, and we have to work with the full $\mathbb{H}^1$-norm.
     
 As the Navier-Stokes  system is dissipative and non-reversible, it is widely known that one cannot expect exact controllability of NSE with an arbitrary target function \cite{jmc5,JL2}. This also holds true for the CBFeD system. On the other hand, we do not know the answer for the approximate controllability of the system \eqref{1} as long as the control is acting in an interior domain  (\cite{SGOY}). 	We first show the  approximate controllability of the system \eqref{1}  in the energy space (divergence-free $\mathbb{L}^2$-space, denoted by $\H$, see Subsection \ref{sub2.1})  by using a distributed control and adopting ideas from the work \cite{vbd} (also \cite{vbd1,VB9}). The $m$-accretivity of the linear and nonlinear operators play a crucial role in establishing the approximate controllability results.  We use a  density argument, a smooth approximation of  the multi-valued map $\sgn(\cdot)$  and finite time extinction property (see Lemma \ref{exact} and \ref{lem3.3}) to obtain the required results.   To the best of our knowledge, in the context of convective Brinkman-Forchheimer equations, the approximate controllability results are not that much explored in the literature. The only known work is \cite{MT9}, where the author established the approximate controllability result in $\H$ with respect to the initial data (viewed as an initial controller) by using the backward uniqueness.
 
Our second aim of this work is to prove the irreducibility of the transition semigroup associated with the stochastic counterpart of  the CBFeD system \eqref{1} in the energy space $\H$ by an application of the approximate controllability results for the system \eqref{1}. But it is not immediate as established in the works \cite{vbd,vbd1,VB9}, etc., where the irreducibility results are obtained in $\mathrm{H}^{-1}$. Moreover, the authors  in \cite{akmtm} established irreducibility results in $\D(\A^{\alpha}),$ where $\A$ is the Stokes operator (see Subsection \ref{sub2.1}),   for $\alpha\in[\frac{1}{4},\frac{1}{2})$ ($r=1,2$) and  for $\alpha\in[\frac{1}{3},\frac{1}{2})$ ($r=3$)    for 2D stochastic CBF equations perturbed by non-degenerate noise.  But we prove the irreducibility results in $\H$ and in order to establish this, we assume that our noise is non-degenerate (see \eqref{423} and Example \ref{exmp4.1} below) and for $\y_0\in\V$, we use the  regularity result given in \eqref{13} for the system \eqref{1}. This method of proving irreducibility appears to be novel in the literature. Note that our results are true for $d=2,3$ with $r\in(3,\infty)$ and $2\beta\mu>1$ for $r=3$  as well as for critical and supercritical CBF equations. 
 	
 \subsection{Organization of the paper} The rest of the paper is organized as follows: In the upcoming section, we present a functional setting needed for the theory of CBFeD equations \eqref{1}, while also examining commonly employed concepts such as the definitions and characteristics of linear, bilinear, and nonlinear operators. Moreover, we also provide the properties of quasi-$m$-accretivity of single-valued and multi-valued operators (Proposition \ref{prop3.1}-\ref{prop3.2}). Section \ref{apcon} explores the topic of approximate controllability specifically related to the CBFeD system \eqref{pf1}. In order to establish this, our initial step involves demonstrating the finite time extinction property of the system \eqref{pf1.1} (Proposition \ref{exact}), with the initial and final data confined within $\D(\A)$. We then extend this result to the whole of $\H$ by taking advantage of a density argument (Proposition \ref{lem3.3}). Ultimately, through the utilization of the finite time extinction property and another density result (Lemma \ref{dense}), we establish the proof of approximate controllability in $\H$ for the CBFeD system \eqref{pf1}. Next, we discuss the irreducibility of the transition semigroup associated with the CBFeD system, which is another significant component of this work. We first perform the stochastic setup of the CBFeD system in Section \ref{stochastic}, where we also provide the relevant details about the noise, the trace-class operator, and the probabilistic notion of a strong solution (see to Definition \ref{def4.2}). In order to achieve irreducibility, it is imperative to obtain certain regularity results for the stochastic CBFeD system \eqref{scbf}. These results are demonstrated in Theorem \ref{regularity}, employing the It\^o formula and Burkholder-Davis-Gundy inequality. Subsequently, in Proposition \ref{engest}, we ascertain the energy estimates of the random system \eqref{scbf1} through the utilization of the solvability of the Ornstein-Uhlenbeck process satisfying \eqref{oup}. The proof of our major finding (Theorem \ref{mainresult}), which states that the  system \eqref{scbf} is irreducible in $\H$, is the focus of the final section. We eventually conclude the proof of the main result by using a density argument, regularity of the solutions of the stochastic system \eqref{scbf}, and approximate controllability result (see Section \ref{apcon}).   As an immediate consequence of the irreducibility result, we obtain the  accessibility of the stochastic CBFeD system \eqref{scbf} (Corollary \ref{cor5.2}).

\section{Functional Settings and Preliminaries}\label{fn}\setcounter{equation}{0}
The aim of this section is to discuss some basic functional setting and preliminaries needed to establish the main results of this work. As discussed earlier, we consider  $\mathbb{T}^d=\left(\frac{\R}{\mathrm{L}\mathbb{Z}}\right)^d$ where $d\in\{2,3\},$ a $d$-dimensional torus.  
\subsection{Function spaces}\label{sub2.1} Let \ $\C_{\mathrm{p}}^{\infty}(\mathbb{T}^d;\R^d)$ denote the space of all infinitely differentiable functions ($\mathbb{R}^d$-valued) satisfying periodic boundary conditions of the form \eqref{2}. The Sobolev space  $\H_{\mathrm{p}}^s(\mathbb{T}^d):=\mathrm{H}_{\mathrm{p}}^s(\mathbb{T}^d;\mathbb{R}^d)$ is the completion of $\C_{\mathrm{p}}^{\infty}(\mathbb{T}^d;\R^d)$  with respect to the $\H^s$-norm and the norm on the space $\H_{\mathrm{p}}^s(\mathbb{T}^d)$ is given by $$\|\y\|_{{\H}^s_{\mathrm{p}}}:=\left(\sum_{0\leq|\boldsymbol\alpha|\leq s}\|\D^{\boldsymbol\alpha}\y\|_{\mathbb{L}^2(\mathbb{T}^d)}^2\right)^{1/2}.$$ 	
It is known from \cite{JCR1} that the Sobolev space of periodic functions $\H_{\mathrm{p}}^s(\mathbb{T}^d)$ is same as the space
$$\H_{\mathrm{f}}^s(\mathbb{T}^d)=\left\{\y:\y=\sum_{k\in\mathbb{Z}^d}\y_{k}\mathrm{e}^{2\pi i k\cdot x /  \mathrm{L}},\ \overline{\y}_{k}=\y_{-k}, \  \|\y\|_{{\H}^s_\mathrm{f}}:=\left(\sum_{k\in\mathbb{Z}^d}(1+|k|^{2s})|\y_{k}|^2\right)^{1/2}<\infty\right\}.$$ We infer from \cite[Proposition 5.38]{JCR1} that the norms $\|\cdot\|_{{\H}^s_{\mathrm{p}}}$ and $\|\cdot\|_{{\H}^s_{\mathrm{f}}}$ are equivalent. Let us define 
\begin{align*} 
	\mathcal{V}:=\{\y\in\C_{\mathrm{p}}^{\infty}(\mathbb{T}^d;\R^d):\nabla\cdot\y=0\}.
\end{align*}
We define the spaces $\H$ and $\widetilde{\L}^{p}$ as the closure of $\mathcal{V}$ in the Lebesgue spaces $\mathrm{L}^2(\mathbb{T}^d;\R^d)$ and $\mathrm{L}^p(\mathbb{T}^d;\R^d)$ for $p\in(2,\infty)$, respectively. We also define the space $\V$ as the closure of $\mathcal{V}$ in the Sobolev space $\mathrm{H}^1(\mathbb{T}^d;\R^d)$. Then, we characterize the spaces $\H$, $\widetilde{\L}^p$ and $\V$ with the norms  $$\|\y\|_{\H}^2:=\int_{\mathbb{T}^d}|\y(x)|^2\d x,\quad \|\y\|_{\widetilde{\L}^p}^p:=\int_{\mathbb{T}^d}|\y(x)|^p\d x\ \text{ and }\ \|\y\|_{\V}^2:=\int_{\mathbb{T}^d}(|\y(x)|^2+|\nabla\y(x)|^2)\d x,$$ respectively. 
Let $(\cdot,\cdot)$ denote the inner product in the Hilbert space $\H$ and $\langle \cdot,\cdot\rangle $ represent the induced duality between the spaces $\V$  and its dual $\V'$ as well as $\widetilde{\L}^p$ and its dual $\widetilde{\L}^{p'}$, where $\frac{1}{p}+\frac{1}{p'}=1$. Note that $\H$ can be identified with its own dual $\H'$. From \cite[Subsection 2.1]{FKS}, we have that the sum space $\V'+\widetilde{\L}^{p'}$ is well defined and  is a Banach space with respect to the norm (see \cite{MT2} also)
\begin{align*}
	\|\y\|_{\V'+\widetilde{\L}^{p'}}&:=\inf\{\|\y_1\|_{\V'}+\|\y_2\|_{\wi\L^{p'}}:\y=\y_1+\y_2, \y_1\in\V' \ \text{and} \ \y_2\in\wi\L^{p'}\}\nonumber\\&=
	\sup\left\{\frac{|\langle\y_1+\y_2,\f\rangle|}{\|\f\|_{\V\cap\widetilde{\L}^p}}:\boldsymbol{0}\neq\f\in\V\cap\widetilde{\L}^p\right\},
\end{align*}
where $\|\cdot\|_{\V\cap\widetilde{\L}^p}:=\max\{\|\cdot\|_{\V}, \|\cdot\|_{\wi\L^p}\}$ is a norm on the Banach space $\V\cap\widetilde{\L}^p$. Also the norm $\max\{\|\y\|_{\V}, \|\y\|_{\wi\L^p}\}$ is equivalent to the norms  $\|\y\|_{\V}+\|\y\|_{\widetilde{\L}^{p}}$ and $\sqrt{\|\y\|_{\V}^2+\|\y\|_{\widetilde{\L}^{p}}^2}$ on the space $\V\cap\widetilde{\L}^p$. Furthermore, we have
$
(\V'+\widetilde{\L}^{p'})'=	\V\cap\widetilde{\L}^p \  \text{and} \ (\V\cap\widetilde{\L}^p)'=\V'+\widetilde{\L}^{p'},
$
Moreover, we have the continuous embeddings $$\V\cap\widetilde{\L}^p\hookrightarrow\V\hookrightarrow\H\cong\H'\hookrightarrow\V'\hookrightarrow\V'+\widetilde{\L}^{p'},$$ where the embedding $\V\hookrightarrow\H$ is compact. 

\subsection{Linear operator}
Let $\mathcal{P}_p: \L^p(\mathbb{T}^d) \to\wi\L^p,$ $p\in[1,\infty)$ be the Helmholtz-Hodge (or Leray) projection operator (cf.  \cite{JBPCK,DFHM}, etc.).	Note that $\mathcal{P}_p$ is a bounded linear operator and for $p=2$,  $\mathcal{P}:=\mathcal{P}_2$ is an orthogonal projection (see \cite[Section 2.1]{JCR}). We define the Stokes operator 
\begin{equation*}
	\left\{
	\begin{aligned}
		\A\y&:=-\mathcal{P}\Delta\y=-\Delta\y,\;\y\in\D(\A),\\
		\D(\A)&:=\V\cap{\H}^{2}_\mathrm{p}(\mathbb{T}^d). 
	\end{aligned}
	\right.
\end{equation*}
For the Fourier expansion $\y=\sum\limits_{k\in\mathbb{Z}^d} e^{2\pi i k\cdot x} \y_{k} ,$ we calculate by using Parseval's identity
\begin{align*}
	\|\y\|_{\H}^2=\sum\limits_{k\in\mathbb{Z}^d} |\y_{k}|^2 \  \text{and} \ \|\A\y\|_{\H}^2=(2\pi)^4\sum_{k\in\mathbb{Z}^d}|k|^{4}|\y_{k}|^2.
\end{align*}
Therefore, we have 
\begin{align*}
	\|\y\|_{\H^2_\mathrm{p}}^2=\sum_{k\in\mathbb{Z}^d}|\y_{k}|^2+\sum_{k\in\mathbb{Z}^d}|k|^{4}| \y_{k}|^2=\|\y\|_{\H}^2+\frac{1}{(2\pi)^4}\|\A\y\|_{\H}^2\leq\|\y\|_{\H}^2+\|\A\y\|_{\H}^2.
\end{align*}
Moreover, by the definition of $\|\cdot\|_{\H^2_\mathrm{p}}$, we have $	\|\y\|_{\H^2_\mathrm{p}}^2\geq\|\y\|_{\H}^2+\|\A\y\|_{\H}^2$ and hence it is immediate that both the norms are equivalent and  $\D(\I+\A)=\H^2_\mathrm{p}(\mathbb{T}^d)$. 

\subsection{Bilinear operator}
Let us define the \textit{trilinear form} $b(\cdot,\cdot,\cdot):\V\times\V\times\V\to\R$ by $$b(\y,\z,\w)=\int_{\mathbb{T}^d}(\y(x)\cdot\nabla)\z(x)\cdot\w(x)\d x=\sum_{i,j=1}^d\int_{\mathbb{T}^d}\y_i(x)\frac{\partial \z_j(x)}{\partial x_i}\w_j(x)\d x.$$ If $\y, \z$ are such that the linear map $b(\y, \z, \cdot) $ is continuous on $\V$, the corresponding element of $\V'$ is denoted by $\mathcal{B}(\y, \z)$. We also denote $\mathcal{B}(\y) = \mathcal{B}(\y, \y):=\mathcal{P}[(\y\cdot\nabla)\y]$.
An integration by parts yields 
\begin{equation*}
	\left\{
	\begin{aligned}
		b(\y,\z,\w) &=  -b(\y,\w,\z),\ &&\text{ for all }\ \y,\z,\w\in \V,\\
		b(\y,\z,\z) &= 0,\ &&\text{ for all }\ \y,\z \in\V.
	\end{aligned}
	\right.\end{equation*}
By an application of H\"older's inequality, we calculate
\begin{align*}
	|b(\y,\z,\w)|=|b(\y,\w,\z)|\leq \|\y\|_{\widetilde{\L}^{r+1}}\|\z\|_{\widetilde{\L}^{\frac{2(r+1)}{r-1}}}\|\w\|_{\V},
\end{align*}
for all $\y\in\V\cap\widetilde{\L}^{r+1}$, $\z\in\V\cap\widetilde{\L}^{\frac{2(r+1)}{r-1}}$ and $\w\in\V$, so that we get 
\begin{align}\label{2p9}
	\|\mathcal{B}(\y,\z)\|_{\V'}\leq \|\y\|_{\widetilde{\L}^{r+1}}\|\z\|_{\widetilde{\L}^{\frac{2(r+1)}{r-1}}}.
\end{align}
Hence, the trilinear map $b : \V\times\V\times\V\to \R$ has a unique extension to a bounded trilinear map from $(\V\cap\widetilde{\L}^{r+1})\times(\V\cap\widetilde{\L}^{\frac{2(r+1)}{r-1}})\times\V$ to $\R$. It can also be seen that $\mathcal{B}$ maps $ \V\cap\widetilde{\L}^{r+1}$  into $\V'+\widetilde{\L}^{\frac{r+1}{r}}$ and using interpolation inequality, we get for $r>3$
\begin{align*}
	\left|\langle \mathcal{B}(\y,\y),\z\rangle \right|=\left|b(\y,\z,\y)\right|\leq \|\y\|_{\widetilde{\L}^{r+1}}\|\y\|_{\widetilde{\L}^{\frac{2(r+1)}{r-1}}}\|\z\|_{\V}\leq\|\y\|_{\widetilde{\L}^{r+1}}^{\frac{r+1}{r-1}}\|\y\|_{\H}^{\frac{r-3}{r-1}}\|\z\|_{\V},
\end{align*}
for all $\z\in\V\cap\widetilde{\L}^{r+1}$. Thus, from here we deduce that
\begin{align*}
\|\mathcal{B}(\y)\|_{\V'+\wi\L^{\frac{r+1}{r}}}\leq\|\y\|_{\widetilde{\L}^{r+1}}^{\frac{r+1}{r-1}}\|\y\|_{\H}^{\frac{r-3}{r-1}},
\end{align*}
for all $r>3$. By using \eqref{2p9} and interpolation inequality, we calculate for $r>3$ (see \cite[Subsection 2.2]{MT2}) 
\begin{align}\label{bilin}
\|\mathcal{B}(\y)-\mathcal{B}(\z)\|_{\V'+\widetilde{\L}^{\frac{r+1}{r}}}\leq \left(\|\y\|_{\H}^{\frac{r-3}{r-1}}\|\y\|_{\widetilde{\L}^{r+1}}^{\frac{2}{r-1}}+\|\z\|_{\H}^{\frac{r-3}{r-1}}\|\z\|_{\widetilde{\L}^{r+1}}^{\frac{2}{r-1}}\right)\|\y-\z\|_{\widetilde{\L}^{r+1}},
\end{align}
for all $\y,\z\in\V\cap\widetilde{\L}^{r+1}$. Hence, the map $\mathcal{B}(\cdot):\V\cap\widetilde{\L}^{r+1}\to\V'+\widetilde{\L}^{\frac{r+1}{r}}$ is a locally Lipschitz operator.     

\subsection{Nonlinear operator}\label{nonlin}
Let us define an operator $\mathcal{C}_1(\y):=\mathcal{P}(|\y|^{r-1}\y)$ for $\y\in\V\cap\L^{r+1}$.  Since the projection operator $\mathcal{P}$ is bounded from $\H^1$ into itself (cf. \cite[Remark 1.6]{Te}), the operator $\mathcal{C}_1(\cdot):\V\cap\widetilde{\L}^{r+1}\to\V'+\widetilde{\L}^{\frac{r+1}{r}}$ is well-defined and we have $\langle\mathcal{C}_1(\y),\y\rangle =\|\y\|_{\widetilde{\L}^{r+1}}^{r+1}.$ 
For all $\y,\z\in\V\cap{\wi \L}^{r+1}$, we have (see \cite[Subsection 2.4]{MT4})
\begin{align}\label{C1}
	\langle\mathcal{C}_1(\y)-\mathcal{C}_1(\z),\y-\z\rangle&\geq \frac{1}{2}\||\y|^{\frac{r-1}{2}}(\y-\z)\|_{\H}^2+\frac{1}{2}\||\z|^{\frac{r-1}{2}}(\y-\z)\|_{\H}^2\geq \frac{1}{2^{r-1}}\|\y-\z\|_{\wi\L^{r+1}}^{r+1},
\end{align}
for all $r\in[1,\infty)$. Moreover, we define $\mathcal{C}_2(\y):=\mathcal{P}(|\y|^{q-1}\y)$ for $1\leq q<r$. The nonlinear operator $\mathcal{C}_2(\cdot)$ satisfies similar properties as $\mathcal{C}_1(\cdot)$. By the application of Taylor's formula, we find 
\begin{align*}
	|\langle\mathcal{C}_1(\y)-\mathcal{C}_1(\z),\w\rangle|&\leq r\sup\limits_{0<\theta<1}\|\theta\y+(1-\theta)\z\|_{\wi\L^{r+1}}^{r-1}\|\y-\z\|_{\wi\L^{r+1}}\|\w\|_{\wi\L^{r+1}}\nonumber\\&\leq r \left(\|\y\|_{\wi\L^{r+1}}+\|\z\|_{\wi\L^{r+1}}\right)^{r-1} \|\y-\z\|_{\wi\L^{r+1}}\|\w\|_{\wi\L^{r+1}},
\end{align*}
for all $\y,\z\in\V\cap\wi\L^{r+1}$ and for $r\geq1$. From here, we conclude that
\begin{align}\label{clin1}
	\|\mathcal{C}_1(\y)-\mathcal{C}_1(\z)\|_{\V'+\wi\L^{\frac{r+1}{r}}}\leq r2^{r-2}\left(\|\y\|_{\wi\L^{r+1}}^{r-1}+\|\z\|_{\wi\L^{r+1}}^{r-1}\right)\|\y-\z\|_{\wi\L^{r+1}},
\end{align}
so that the operator $\mathcal{C}_1(\cdot)$ is locally Lipschitz. 

The following properties of linear and nonlinear operators will be exploited in the rest of the paper. 
\begin{proposition}\label{prop3.1}
	For $d=2,3$ with $r>3$, define the operator $\Gamma(\cdot):\mathrm{D}(\mathcal{M})\to\H$ by
	\begin{align*}
		\Gamma(\cdot)=\mu\A+\alpha\mathrm{I}+\mathcal{B}(\cdot)+\beta\mathcal{C}_1(\cdot)+\gamma\mathcal{C}_2(\cdot), 
	\end{align*}
	where $\mathrm{D}(\Gamma)=\{\y\in\V\cap\wi\L^{r+1}:\A\y\in\H\}.$ Then, $\D(\Gamma)=\D(\A)$ and $\Gamma$ is quasi-$m$-accretive in $\H$ for sufficiently large $\kappa\geq\eta_1+\eta_2+\eta_3$ such that 
	\begin{align}\label{mono}
	((\Gamma+\kappa\I)(\y_1)-(\Gamma+\kappa\I)(\y_2),\y_1-\y_2)\geq0,
	\end{align} 
 for all $\y_1,\y_2\in\D(\A)$, where $\eta_1:=\frac{r-q}{r-1}\left[\frac{2^{q}q|\gamma|(q-1)}{\beta(r-1)}\right]^{\frac{q-1}{r-q}},$ $\eta_2:=\frac{r-q}{r-1}\left[\frac{2^{q+1}q|\gamma|(q-1)}{\beta(r-1)}\right]^{\frac{q-1}{r-q}}$ and $\eta_3:=\frac{r-3}{2\mu(r-1)}\left[\frac{4}{\varepsilon\beta\mu (r-1)}\right]^{\frac{2}{r-3}}.$
\end{proposition}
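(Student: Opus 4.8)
The plan is to verify the three assertions in turn: the identification of the domain, the accretivity inequality \eqref{mono}, and the range condition that promotes accretivity to full $m$-accretivity.

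First I would establish $\D(\Gamma)=\D(\A)$. Since $d\in\{2,3\}$, one has the Sobolev embedding $\H^2_{\mathrm p}(\mathbb{T}^d)\hookrightarrow\mathrm{L}^\infty(\mathbb{T}^d;\R^d)\hookrightarrow\wi\L^{r+1}$, so every $\y\in\D(\A)=\V\cap\H^2_{\mathrm p}(\mathbb{T}^d)$ lies in $\V\cap\wi\L^{r+1}$ with $\A\y=-\Delta\y\in\H$, giving $\D(\A)\subseteq\D(\Gamma)$. Conversely, if $\y\in\V\cap\wi\L^{r+1}$ with $\A\y\in\H$, then expanding $\y=\sum_{k}\y_k\mathrm e^{2\pi ik\cdot x/\mathrm L}$ the conditions $\y\in\V$ and $-\Delta\y\in\L^2$ force $\sum_k(1+|k|^2)|\y_k|^2<\infty$ and $\sum_k|k|^4|\y_k|^2<\infty$, hence $\sum_k(1+|k|^4)|\y_k|^2<\infty$ and $\y\in\H^2_{\mathrm p}(\mathbb{T}^d)$, so $\D(\Gamma)\subseteq\D(\A)$. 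The same embedding $\y\in\mathrm{L}^\infty$ together with $\nabla\y\in\H$ shows that $\mathcal{B}(\y)=\mathcal{P}[(\y\cdot\nabla)\y]$, $\mathcal{C}_1(\y)$ and $\mathcal{C}_2(\y)$ all lie in $\H$, so $\Gamma:\D(\A)\to\H$ is well defined.

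Next I would prove the accretivity inequality. Writing $\w:=\y_1-\y_2$ for $\y_1,\y_2\in\D(\A)$, the linear part yields $\mu\|\nabla\w\|_{\H}^2+\alpha\|\w\|_{\H}^2$. Using the cancellation $b(\y_1,\w,\w)=0$ I would rewrite $(\mathcal{B}(\y_1)-\mathcal{B}(\y_2),\w)=b(\w,\y_2,\w)=-b(\w,\w,\y_2)$ and bound it by $\int_{\mathbb{T}^d}|\y_2|\,|\w|\,|\nabla\w|\d x$; a first Young inequality absorbs $\tfrac{\mu}{2}\|\nabla\w\|_{\H}^2$ and leaves $\tfrac{1}{2\mu}\int_{\mathbb{T}^d}|\y_2|^2|\w|^2\d x$, and a second Young inequality with conjugate exponents $\tfrac{r-1}{2},\tfrac{r-1}{r-3}$ (this is where $r>3$ enters) splits it into a small multiple of $\||\y_2|^{(r-1)/2}\w\|_{\H}^2$ plus the residual $\eta_3\|\w\|_{\H}^2$. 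The monotonicity bound \eqref{C1} for $\mathcal{C}_1$ supplies the good terms $\tfrac{\beta}{2}\big(\||\y_1|^{(r-1)/2}\w\|_{\H}^2+\||\y_2|^{(r-1)/2}\w\|_{\H}^2\big)$ that absorb this convective remainder. For the $\gamma$-term I would use a pointwise convexity estimate of the form $\langle\mathcal{C}_2(\y_1)-\mathcal{C}_2(\y_2),\w\rangle\le C_q\int_{\mathbb{T}^d}(|\y_1|^{q-1}+|\y_2|^{q-1})|\w|^2\d x$ and, since $q<r$, a Young inequality with conjugate exponents $\tfrac{r-1}{q-1},\tfrac{r-1}{r-q}$ to dominate each summand by the remaining share of $\tfrac{\beta}{2}\||\y_i|^{(r-1)/2}\w\|_{\H}^2$ plus residuals $\eta_1\|\w\|_{\H}^2$ and $\eta_2\|\w\|_{\H}^2$; the two differ precisely because part of the $\y_2$-budget has already been spent by the convective estimate, which explains the extra factor in $\eta_2$. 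Collecting terms, all weighted norms $\||\y_i|^{(r-1)/2}\w\|_{\H}^2$ cancel, leaving $(\Gamma(\y_1)-\Gamma(\y_2),\w)\ge\tfrac{\mu}{2}\|\nabla\w\|_{\H}^2+(\alpha-\eta_1-\eta_2-\eta_3)\|\w\|_{\H}^2$, so adding $\kappa\|\w\|_{\H}^2$ with $\kappa\ge\eta_1+\eta_2+\eta_3$ gives \eqref{mono}.

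It remains to establish the range condition $R(\Gamma+\kappa\I+\lambda\I)=\H$ for some $\lambda>0$, and this is the step I expect to be the main obstacle. My plan is to solve, for given $\f\in\H$, the stationary problem $\mu\A\y+\alpha\y+\mathcal{B}(\y)+\beta\mathcal{C}_1(\y)+\gamma\mathcal{C}_2(\y)+(\kappa+\lambda)\y=\f$ via a Faedo--Galerkin scheme in the eigenbasis of $\A$; the finite-dimensional problems are solvable because the projected operator is continuous, monotone (by the computation above) and coercive on $\V\cap\wi\L^{r+1}$. The delicate part is the a priori bound that places the limit in $\D(\A)$ rather than only in $\V\cap\wi\L^{r+1}$: testing against $\A\y$ and invoking the commutator identity \eqref{3} (valid because $\mathcal{P}$ and $-\Delta$ commute on $\mathbb{T}^d$), the term $\beta(\mathcal{C}_1(\y),\A\y)$ produces the nonnegative quantity $\beta\int_{\mathbb{T}^d}|\nabla\y|^2|\y|^{r-1}\d x+\tfrac{4\beta(r-1)}{(r+1)^2}\int_{\mathbb{T}^d}\big|\nabla|\y|^{(r+1)/2}\big|^2\d x$, which, after the convective and pumping terms are absorbed into $\mu\|\A\y\|_{\H}^2$ and into this quantity exactly as before, yields a uniform bound on $\|\A\y\|_{\H}$. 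Passing to the limit by Minty's monotonicity trick, using the compact embedding $\V\hookrightarrow\H$ to identify the nonlinear limits, shows $\A\y\in\H$, hence $\y\in\D(\A)$ and $\f\in R(\Gamma+(\kappa+\lambda)\I)$; combined with the accretivity already proved, this gives that $\Gamma+\kappa\I$ is $m$-accretive. The real difficulty lies in keeping the Galerkin a priori estimates uniform while simultaneously controlling $\mathcal{B}(\y)$ and $\gamma\mathcal{C}_2(\y)$ in $\H$, so that the identity \eqref{3} supplies enough coercivity in the supercritical regime $r>3$ to close the $\D(\A)$-bound.
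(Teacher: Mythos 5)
The paper does not actually prove Proposition \ref{prop3.1}: its ``proof'' is a one-line citation to \cite[Proposition A.1]{sKM1}, so there is no internal argument to compare yours against line by line. Your blind reconstruction is correct and, judging by the form of the constants in the statement, essentially coincides with what the cited proof must do. The domain identification via $\H^2_{\mathrm{p}}(\mathbb{T}^d)\hookrightarrow\mathrm{L}^{\infty}$ (valid for $d\le 3$) together with the Fourier characterization of $\D(\A)$ is right. Your monotonicity computation reproduces the stated constants exactly: the Young splitting of $\frac{1}{2\mu}\||\y_2|\w\|_{\H}^2$ with exponents $\frac{r-1}{2},\frac{r-1}{r-3}$ yields precisely $\eta_3=\frac{r-3}{2\mu(r-1)}\left[\frac{4}{\varepsilon\beta\mu(r-1)}\right]^{\frac{2}{r-3}}$ (the otherwise unexplained $\varepsilon$ in the statement is the free parameter of this Young step), and the asymmetric absorption budgets for $\||\y_1|^{\frac{r-1}{2}}\w\|_{\H}^2$ and $\||\y_2|^{\frac{r-1}{2}}\w\|_{\H}^2$ account for the factor $2^{q}$ versus $2^{q+1}$ distinguishing $\eta_1$ from $\eta_2$, exactly as you observe. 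For the range condition, your Galerkin-plus-Minty scheme, with the $\D(\A)$ a priori bound powered by the commutator identity \eqref{3}, is the standard route; it mirrors the estimates this paper itself carries out in the parabolic setting (compare \eqref{vreg1.0}--\eqref{vreg3} in the proof of Theorem \ref{regularity}, where the convective and pumping terms are absorbed into $\mu\|\A\X\|_{\H}^2$ and $\beta\||\X|^{\frac{r-1}{2}}\nabla\X\|_{\H}^2$ in the same way). Two routine completions would make the sketch airtight: in the limit passage, the uniform $\D(\A)$ bound plus compactness of $\V\hookrightarrow\H$ gives a.e.\ convergence, which combined with the uniform $\wi\L^{r+1}$ bound already identifies the weak limits of $\mathcal{C}_1(\y_n)$ and $\mathcal{C}_2(\y_n)$ (so Minty is optional); and one should record that surjectivity of $\lambda\I+(\Gamma+\kappa\I)$ for a single $\lambda>0$, together with the monotonicity you proved, is what upgrades $\Gamma+\kappa\I$ to maximal monotone, hence $\Gamma$ to quasi-$m$-accretive. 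Neither point is a gap.
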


\begin{proof}
	See \cite[Proposition A.1]{sKM1}.
\end{proof}

\begin{proposition}\label{prop3.2}
	Define the multivalued operator  $\mathfrak{G}:\mathrm{D}(\mathfrak{A})\to\H$ by 
	\begin{equation*}
	\mathfrak{G}(\cdot) = \mu\mathrm{A}+\alpha\I +\mathcal{B}(\cdot)+\beta\mathcal{C}_1(\cdot)+\gamma\mathcal{C}_2(\cdot)+\rho\mathrm{sgn}(\cdot-\y_1)+
	\kappa\I,
	\end{equation*}
	with the domain $\mathrm{D}(\mathfrak{G})=\{\y\in\H: \varnothing \neq \mathfrak{G}(\y)\subset\H\}$,  where  $\kappa$ is the same as in Proposition \ref{prop3.1}. Then $\mathrm{D}(\mathfrak{G}) = \mathrm{D}(\mathrm{A})\cap\mathrm{D}(\Phi)$, where $\Phi(\y)=\mathrm{sgn}(\y-\y_1)=\partial(\|\y-\y_1\|_{\H})$, $\partial$ stands for the subdifferential, and $\mathfrak{G}$ is a maximal monotone operator in $\H\times\H,$ 
\end{proposition}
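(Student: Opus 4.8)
The plan is to exhibit $\mathfrak{G}$ as the sum of two maximal monotone operators on $\H$, namely $\mathfrak{G} = (\Gamma + \kappa\I) + \rho\,\Phi$, and to invoke the perturbation theory for such operators. Proposition \ref{prop3.1} already provides that $\Gamma + \kappa\I$ is $m$-accretive on $\H$ with $\mathrm{D}(\Gamma) = \mathrm{D}(\A)$; since in a Hilbert space accretivity is monotonicity and $m$-accretivity is maximal monotonicity, $\Gamma + \kappa\I$ is maximal monotone. This is the analytically heavy ingredient, and it is in hand. For the second summand I would record that $\varphi(\y) := \|\y - \y_1\|_{\H}$ is a proper, convex, finite and continuous (hence lower semicontinuous) functional on $\H$; by Rockafellar's theorem its subdifferential $\Phi = \partial\varphi = \mathrm{sgn}(\cdot - \y_1)$ is maximal monotone, and since $\varphi$ is finite and continuous everywhere one has $\mathrm{D}(\Phi) = \H$, with $\|\xi\|_{\H} \le 1$ for every $\xi \in \Phi(\y)$. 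Thus $\rho\,\Phi$ is an everywhere-defined, bounded maximal monotone operator.

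Two preliminary consequences are then immediate. First, for $\y \in \H$ the set $\mathfrak{G}(\y)$ is nonempty and contained in $\H$ exactly when $(\Gamma + \kappa\I)(\y) \in \H$, i.e. $\y \in \mathrm{D}(\Gamma) = \mathrm{D}(\A)$, the requirement $\Phi(\y) \subset \H$ being automatic; hence $\mathrm{D}(\mathfrak{G}) = \mathrm{D}(\A) \cap \mathrm{D}(\Phi)$, which coincides with $\mathrm{D}(\A)$ because $\mathrm{D}(\Phi) = \H$. Second, monotonicity of $\mathfrak{G}$ follows by adding \eqref{mono} for $\Gamma + \kappa\I$ to the monotonicity inequality for the subdifferential $\Phi$, using $\rho \ge 0$.

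For maximality I would appeal to the sum theorem for maximal monotone operators (see \cite[Chapter 2]{VB2}): if $A$ and $B$ are maximal monotone on $\H$ and $\mathrm{D}(A) \cap \mathrm{int}\,\mathrm{D}(B) \neq \emptyset$, then $A + B$ is maximal monotone. With $A = \Gamma + \kappa\I$ and $B = \rho\,\Phi$ one has $\mathrm{int}\,\mathrm{D}(B) = \mathrm{int}\,\H = \H$, so $\mathrm{D}(A) \cap \mathrm{int}\,\mathrm{D}(B) = \mathrm{D}(\A) \neq \emptyset$ and the hypothesis is satisfied; this yields at once that $\mathfrak{G}$ is maximal monotone in $\H \times \H$. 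Equivalently, by Minty's theorem, maximality amounts to the range condition $\mathrm{R}(\I + \mathfrak{G}) = \H$.

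Should a self-contained argument be preferred to the abstract sum theorem, I would instead establish $\mathrm{R}(\I + \mathfrak{G}) = \H$ constructively. Replacing $\Phi$ by its Yosida approximation $\Phi_\lambda$ — a single-valued, Lipschitz, everywhere-defined monotone regularisation of $\mathrm{sgn}(\cdot - \y_1)$ — the operator $(\Gamma + \kappa\I) + \rho\,\Phi_\lambda$ is $m$-accretive, so for each $\h \in \H$ the equation $\y_\lambda + (\Gamma + \kappa\I)(\y_\lambda) + \rho\,\Phi_\lambda(\y_\lambda) = \h$ admits a solution $\y_\lambda \in \mathrm{D}(\A)$. After deriving $\lambda$-uniform bounds for $\y_\lambda$ in $\V \cap \widetilde{\L}^{r+1}$ and for $\Phi_\lambda(\y_\lambda)$ in $\H$ (the latter bounded by $1$), I would extract weak limits and let $\lambda \to 0$. \emph{The main obstacle is precisely this limit passage}: identifying the weak limits of $\mathcal{B}(\y_\lambda)$, $\mathcal{C}_1(\y_\lambda)$ and $\mathcal{C}_2(\y_\lambda)$ with $\mathcal{B}(\y)$, $\mathcal{C}_1(\y)$ and $\mathcal{C}_2(\y)$ requires the compact embedding $\V \hookrightarrow \H$ together with a Minty--Browder monotonicity argument, while recovering $\lim_{\lambda \to 0} \Phi_\lambda(\y_\lambda) \in \Phi(\y)$ uses the demiclosedness of the maximal monotone map $\Phi$ and the convergence $(\I + \lambda\Phi)^{-1}\y_\lambda \to \y$ in $\H$. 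Either route delivers the stated conclusion.
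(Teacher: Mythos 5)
Your proposal is correct. The paper gives no inline argument for this proposition---it defers entirely to \cite[Proposition A.3]{sKM1}---and your proof supplies exactly the standard machinery such a result rests on: splitting $\mathfrak{G}=(\Gamma+\kappa\I)+\rho\,\partial\left(\|\cdot-\y_1\|_{\H}\right)$, obtaining maximal monotonicity of the first summand from the quasi-$m$-accretivity in Proposition \ref{prop3.1} (via Minty's equivalence in Hilbert space), of the second from Rockafellar's theorem for subdifferentials of continuous convex functions (which also gives $\mathrm{D}(\Phi)=\H$ and hence the domain identity $\mathrm{D}(\mathfrak{G})=\mathrm{D}(\A)\cap\mathrm{D}(\Phi)=\mathrm{D}(\A)$), and concluding with the sum theorem for maximal monotone operators, whose interiority hypothesis is trivially satisfied since $\mathrm{int}\,\mathrm{D}(\Phi)=\H$; the Yosida-approximation argument you sketch as a self-contained alternative is likewise sound.
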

\begin{proof}
	See \cite[Proposition A.3]{sKM1}.
\end{proof}
\section{Approximate controllability}\label{apcon}\setcounter{equation}{0}
The aim of this section is to prove the approximate controllability result in $\H$ for CBFeD equations.  The problem of approximate controllability is a relaxed version of the exact controllability. Instead of requiring that the state of the system reaches the target exactly, one may wonder if, at least, one can get arbitrarily close to the target (cf. \cite{fab}). Mathematically speaking, this can be written as follows: we are given $T>0$ fixed and we are given an element $\y^T$ of a (suitable) function space. Think of $\y^T$ as being the target. Can we find a control $\v$ such that at time $T$ there is a state $\y(\cdot,T,\v)$ which is as ``close'' as we wish of $\y^T$?

Let us consider the following CBFeD equations with control:
\begin{equation}\label{pf1}
	\left\{
	\begin{aligned}
		\frac{\d \y(t)}{\d t}+\mu\A\y(t)+ \alpha\y(t)+\mathcal{B}(\y(t))+ \beta\mathcal{C}_1(\y(t))+\gamma\mathcal{C}_2(\y(t))&= \mathrm{B}\u(t),  \ \text{for a.e.} \ t>0,\\ 
		\y(0)&=\y_0,
	\end{aligned}
	\right.
\end{equation}	
where the initial data $\y_0\in\H$ and $\mathrm{B}:\U\to\H$ is a bounded linear operator from Hilbert space $\U$ to $\H$ and $\u\in\mathrm{L}^2(0,T;\U)$ is a distributed control. Let  $\y_1\in\H$ be fixed.  Our aim is to find an appropriate controller such that in a given time, we can steer the system \eqref{pf1}, from the initial state $\y_0$ to a neighborhood of the target $\y_1$, that is, to prove that the system is \emph{approximately controllable}.  Mathematically speaking, the system \eqref{pf1} is said to be \emph{approximately controllable}, if for arbitrary  $\y_0,\y_1\in\H$, there exists a control $\u\in\mathrm{L}^2(0,T;\U)$ such that for a given $\varepsilon>0, \ T>0$, we have 
\begin{align*}
	\|\y(T)-\y_1\|_{\H}\leq\varepsilon.
\end{align*}
Therefore, our goal is to show that 
\begin{proposition}\label{prop31}
Let $\y_0,\y_1\in\H$. Then, for all $\e>0$, there exists a control $\u\in\C([0,T];\U)$ such that $$\|\y(T)-\y_1\|_{\H}\leq \e.$$ 	That is, the system \eqref{pf1} is approximately controllable.
\end{proposition}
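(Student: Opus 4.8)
The plan is to adapt the Barbu-type feedback argument of \cite{vbd,vbd1,VB9} to the CBFeD setting: I would construct a control that forces the finite-time extinction of $\y-\y_1$, and then remove the two obstructions to admissibility (namely, that the target must lie in $\D(\A)$ for the feedback forcing to belong to $\H$, and that the multivalued $\sgn(\cdot)$ produces a discontinuous control) by a density argument together with a smooth approximation of $\sgn(\cdot)$.

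First I would reduce to a smooth target. Since $\D(\A)$ is dense in $\H$, given $\e>0$ I fix $\widehat{\y}_1\in\D(\A)$ with $\|\widehat{\y}_1-\y_1\|_{\H}<\e/2$; this is precisely what guarantees $\Gamma(\widehat{\y}_1)\in\H$, so that the feedback forcing below is an admissible element of $\H$. Keeping $\y_0\in\H$, I then consider the closed-loop system obtained by choosing the control so that
\begin{align*}
\frac{\d\y}{\d t}+\Gamma(\y)+\rho\,\sgn(\y-\widehat{\y}_1)\ni\Gamma(\widehat{\y}_1),\qquad \y(0)=\y_0,
\end{align*}
that is, $\mathrm{B}\u=\Gamma(\widehat{\y}_1)-\rho\,\sgn(\y-\widehat{\y}_1)$, whose well-posedness is governed by the maximal monotone operator $\mathfrak{G}$ of Proposition \ref{prop3.2}.

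The heart of the matter is the finite-time extinction estimate. Setting $\z=\y-\widehat{\y}_1$ and testing the equation for $\z$ with $\z$ in $\H$, the quasi-$m$-accretivity \eqref{mono} of Proposition \ref{prop3.1} gives $(\Gamma(\y)-\Gamma(\widehat{\y}_1),\z)\geq-\kappa\|\z\|_{\H}^2$, while $(\sgn(\z),\z)=\|\z\|_{\H}$, so that
\begin{align*}
\frac{\d}{\d t}\|\z(t)\|_{\H}\leq\kappa\|\z(t)\|_{\H}-\rho.
\end{align*}
A comparison with the associated scalar ODE then shows that, for $\rho$ chosen large enough (depending only on $\|\y_0-\widehat{\y}_1\|_{\H}$, $\kappa$ and $T$), one has $\z(t)\equiv\0$, i.e.\ $\y(t)=\widehat{\y}_1$, for all $t\geq T^{*}$ with $T^{*}\leq T$. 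This is exactly the content of Proposition \ref{exact} for data in $\D(\A)$ and of Proposition \ref{lem3.3} after the density extension allowing $\y_0\in\H$.

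Finally, to produce an admissible \emph{continuous} control I would replace the multivalued $\sgn(\cdot)$ by a smooth approximation $\sgn_\delta(\cdot)$ (for instance $\z\mapsto\z/\sqrt{\|\z\|_{\H}^2+\delta^2}$), solve the regularized closed-loop system to obtain $\u_\delta\in\C([0,T];\U)$ (the map $t\mapsto\sgn_\delta(\y(t)-\widehat{\y}_1)$ being now continuous in $\H$), and invoke the density result Lemma \ref{dense} together with the same monotonicity estimate to show that the corresponding state satisfies $\|\y_\delta(T)-\widehat{\y}_1\|_{\H}<\e/2$ for $\delta$ small. The triangle inequality $\|\y_\delta(T)-\y_1\|_{\H}\leq\|\y_\delta(T)-\widehat{\y}_1\|_{\H}+\|\widehat{\y}_1-\y_1\|_{\H}<\e$ then closes the argument. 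I expect the main obstacle to be precisely this last step: the regularized term $\sgn_\delta$ furnishes strictly weaker dissipation near $\z=\0$, so the extinction mechanism only drives the state into a small ball about $\widehat{\y}_1$ rather than onto it, and one must quantify this defect uniformly in $\delta$ while simultaneously retaining continuity of $\u_\delta$ and the $\e/2$ bound. This gap between the exact extinction of the multivalued system and what the smoothed, continuous control can achieve is the structural reason the conclusion is \emph{approximate} rather than exact controllability.
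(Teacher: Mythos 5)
Your core construction matches the paper's own proof: the feedback $-\rho\,\sgn(\cdot-\widehat{\y}_1)$, the quasi-$m$-accretivity estimate \eqref{mono} yielding $\frac{\d}{\d t}\|\z(t)\|_{\H}\leq\kappa\|\z(t)\|_{\H}-\rho$ and hence finite-time extinction for $\rho$ large, and the reduction to data/targets in $\D(\A)$ followed by a density extension are exactly Lemmas \ref{exact} and \ref{lem3.3}. Your bookkeeping variant of putting $\Gamma(\widehat{\y}_1)$ on the right-hand side (instead of requiring $\rho>\|\Gamma(\y_1)\|_{\H}$ as the paper does) is equivalent, and approximating the target first, so that only steering to $\D(\A)$-targets is ever needed, is a legitimate reorganization of the paper's two steps.

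The genuine gap is in your final step, and it is conceptual. First, solving the $\sgn_\delta$-regularized closed-loop system does \emph{not} produce a control $\u_\delta\in\C([0,T];\U)$: the feedback $\Gamma(\widehat{\y}_1)-\rho\,\sgn_\delta(\y_\delta-\widehat{\y}_1)$ is a continuous $\H$-valued function, but nothing places it in the range of $\mathrm{B}$, let alone in $\{\mathrm{B}\u:\u\in\C([0,T];\U)\}$; continuity in $\H$ and admissibility are unrelated constraints. Second, and this is where your diagnosis of the ``structural reason'' goes wrong, no smoothing of $\sgn$ is needed to manufacture the admissible control: Lemma \ref{dense} gives density of $\{\mathrm{B}\u:\u\in\C([0,T];\U)\}$ in $\mathrm{L}^2(0,T;\H)$, and $\mathrm{L}^2$-density is indifferent to whether the approximated function is continuous. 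The paper therefore applies Lemma \ref{dense} directly to the (possibly discontinuous) $\mathrm{L}^{\infty}(0,T;\H)$ feedback control $\v$ that achieves \emph{exact} extinction, obtains $\u_\e\in\C([0,T];\U)$ with $\|\mathrm{B}\u_\e-\v\|_{\mathrm{L}^2(0,T;\H)}\leq\e$, and a single monotonicity-plus-Gronwall comparison of the two trajectories then gives $\|\y_\e(T)-\y_1\|_{\H}\lesssim e^{2\kappa T}\sqrt{\e}$. So the conclusion is approximate, rather than exact, solely because of the $\mathrm{B}\u$-range constraint and the $\mathrm{L}^2$ control-approximation error propagated through Gronwall — not because of any loss of dissipation near $\z=\0$. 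In the paper the smooth approximation \eqref{smap} plays only an auxiliary role: it justifies the energy identity \eqref{pf1.3} and the convergence $\v^j\to\v$ of the feedback controls inside Lemma \ref{lem3.3}. Your smoothed route could in fact be completed (with your choice of $\sgn_\delta$ one has $(\sgn_\delta(\z),\z)\geq\|\z\|_{\H}-\delta$, so the state is driven into an $O(\delta)$-ball, after which Lemma \ref{dense} and Gronwall apply verbatim), but the ``main obstacle'' you flag and leave unresolved is a self-created one that the paper's ordering of the steps eliminates entirely.
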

We divide the proof of Proposition \ref{prop31} into the following parts: 
\begin{lemma}\label{exact}
	Assume that $\y_0,\y_1\in\D(\A)$. Then there exists a control $\v\in\mathrm{L}^{\infty}(0,T;\H)$ such that the solution $\y(\cdot)$ to the problem 
	\begin{equation}\label{pf1.1}
		\left\{
		\begin{aligned}
\frac{\d \y(t)}{\d t}+ \mu\A\y(t)+\alpha\y(t)+\mathcal{B}(\y(t))+\beta\mathcal{C}_1(\y(t))+ \gamma\mathcal{C}_2(\y(t))&= \v(t),  \ \text{for a.e.} \ t\in[0,T],\\ 
			\y(0)&=\y_0,
		\end{aligned}
		\right.
	\end{equation}	
satisfies $\y\in\mathrm{W}^{1,\infty}(0,T;\H)\cap\mathrm{L}^{\infty}(0,T;\D(\A))\cap\mathrm{C}([0,T];\V)$ and $\y(T)=\y_1$.
\end{lemma}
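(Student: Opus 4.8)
The plan is to achieve the exact steering by a dissipative feedback built from the multivalued signum map and then to extract the required open-loop control from the resulting closed-loop trajectory. Fix $\rho>0$ (to be chosen large later) and consider the closed-loop Cauchy problem
\begin{equation*}
	\frac{\d\y(t)}{\d t}+\Gamma(\y(t))+\rho\,\sgn(\y(t)-\y_1)\ni\Gamma(\y_1),\qquad \y(0)=\y_0,
\end{equation*}
where $\Gamma$ is the operator from Proposition \ref{prop3.1}. Note that $\Gamma(\y_1)\in\H$: since $\y_1\in\D(\A)$ and $d\le 3$, the Stokes term lies in $\H$, the convective term does too via $\H^2_{\mathrm p}(\mathbb{T}^d)\hookrightarrow\L^\infty$, and $|\y_1|^{r-1}\y_1,\,|\y_1|^{q-1}\y_1\in\L^\infty\subset\mathbb{L}^2$.

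First I would settle existence and regularity. Rewriting the problem as $\frac{\d\y}{\d t}+\mathfrak{G}(\y)\ni\Gamma(\y_1)+\kappa\y$, where $\mathfrak{G}$ is the maximal monotone operator of Proposition \ref{prop3.2}, the right-hand side is a time-constant element of $\H$ plus the globally Lipschitz map $\y\mapsto\kappa\y$. The nonlinear semigroup theory for maximal monotone operators under Lipschitz perturbations then provides, for $\y_0\in\D(\mathfrak{G})=\D(\A)$, a unique strong solution with $\y\in\mathrm{W}^{1,\infty}(0,T;\H)$; boundedness of $\A\y(t)$ in $\H$ on $[0,T]$ upgrades this to $\y\in\mathrm{L}^{\infty}(0,T;\D(\A))$, and interpolation between $\D(\A)$ and $\H$ (recall $\D(\A^{1/2})=\V$) yields $\y\in\C([0,T];\V)$.

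The heart of the argument is a finite-time extinction estimate for $\z:=\y-\y_1$, which solves $\frac{\d\z}{\d t}+\Gamma(\y_1+\z)-\Gamma(\y_1)+\rho\,\eta=\0$ with $\eta(t)\in\sgn(\z(t))$ and $\z(0)=\y_0-\y_1$. Pairing with $\z$ and using the quasi-accretivity inequality $(\Gamma(\w_1)-\Gamma(\w_2),\w_1-\w_2)\ge-\kappa\|\w_1-\w_2\|_{\H}^2$ coming from \eqref{mono}, together with $(\eta,\z)=\|\z\|_{\H}\ge0$, I get $\frac{\d}{\d t}\|\z\|_{\H}^2\le 2\kappa\|\z\|_{\H}^2$ and hence the a priori bound $\|\z(t)\|_{\H}\le\|\y_0-\y_1\|_{\H}\mathrm{e}^{\kappa T}=:M$ on $[0,T]$, which is crucially independent of $\rho$. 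Since $\z\in\mathrm{W}^{1,\infty}(0,T;\H)$, the map $t\mapsto\|\z(t)\|_{\H}$ is Lipschitz; at a.e.\ $t$ where $\z(t)\neq\0$ the chain rule and $(\eta,\z/\|\z\|_{\H})=1$ give
\begin{equation*}
	\frac{\d}{\d t}\|\z(t)\|_{\H}\le\kappa\|\z(t)\|_{\H}-\rho\le\kappa M-\rho.
\end{equation*}
Choosing $\rho>\kappa M+\|\y_0-\y_1\|_{\H}/T$ makes $\|\z\|_{\H}$ strictly decrease while nonzero, so it vanishes before time $T$; because $\y_1$ is an equilibrium of the closed loop ($\0\in\sgn(\0)$ allows $\z\equiv\0$), the trajectory remains at $\y_1$ afterwards and $\y(T)=\y_1$. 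It then remains to set $\v(t):=\Gamma(\y_1)-\rho\,\eta(t)$; as $\|\eta(t)\|_{\H}\le1$, we have $\v\in\mathrm{L}^{\infty}(0,T;\H)$, and by construction $\y$ solves \eqref{pf1.1} with control $\v$ and hits the target.

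The hard part will be the rigorous treatment of the extinction step in the presence of the multivalued term: justifying the chain rule for $t\mapsto\|\z(t)\|_{\H}$ near the extinction time (including where $\z$ vanishes), confirming that the a priori bound $M$ is genuinely $\rho$-independent so that $\rho$ may be taken as large as needed, and verifying that $\y_1$ is absorbing for the closed loop so that no re-excitation occurs after extinction.
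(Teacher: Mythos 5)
Your proof is correct and takes essentially the same route as the paper's: a multivalued signum feedback $-\rho\,\mathrm{sgn}(\cdot-\y_1)$, existence and regularity of the closed-loop strong solution via the quasi-$m$-accretivity results (Propositions \ref{prop3.1}--\ref{prop3.2}), a finite-time extinction estimate valid for $\rho$ sufficiently large, and extraction of the open-loop control as the single-valued section. The only substantive difference is that you add the constant forcing $\Gamma(\y_1)$ to the closed loop, making $\y_1$ an exact equilibrium and yielding the control $\v=\Gamma(\y_1)-\rho\eta$, whereas the paper forces by $\boldsymbol{0}$, absorbs the resulting term $(\Gamma(\y_1),\y-\y_1)$ into the extinction estimate through the requirement $\rho>\|\Gamma(\y_1)\|_{\H}$, and takes $\v=-\rho\,\mathrm{sgn}(\y-\y_1)$.
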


\begin{proof}
We first consider the following  Cauchy problem: 
\begin{equation}\label{pf1.2}
	\left\{
	\begin{aligned}
		\frac{\d \y(t)}{\d t}+\mu\A\y(t)+\alpha\y(t)+\mathcal{B}(\y(t))+\beta\mathcal{C}_1(\y(t))+ \gamma\mathcal{C}_2(\y(t))+\rho(\mathrm{sgn}(\y(t)-\y_1))&\ni\0, \ \\
		\y(0)&=\y_0,
	\end{aligned}
	\right.
\end{equation}
 for a.e. $ t\in[0,T]$, where $\mathrm{sgn}(\cdot)$ is a multivalued mapping defined as 
\begin{align}\label{310}
	\mathrm{sgn}(\y)=
	\begin{cases}
		\frac{\y}{\|\y\|_{\H}}, &\text{ if } \ \y\neq\mathbf{0},\\
		\{\y\in\H:\|\y\|_{\H}\leq 1\}, &\text{ if } \ \y=\mathbf{0}.
	\end{cases}
\end{align}
	From Propositions \ref{prop3.1} and \ref{prop3.2}, it is clear that the operator $\Gamma(\cdot)+\rho(\mathrm{sgn}(\cdot-\y_1))$ is \emph{quasi $m$-accretive} in $\H\times\H$. Then, from \cite[Theorem 1.3]{sKM}, we infer that the Cauchy problem \eqref{pf1.2} has a \emph{unique strong solution}
\begin{align*}
	\y\in\W^{1,\infty}(0,T;\H)\cap\mathrm{L}^{\infty}(0,T;\D(\A))\cap \C([0,T];\V).
\end{align*}
Since, a strong solution exists, $\|\cdot\|_{\H}^2$ is absolutely continuous, and multiplying \eqref{pf1.2} scalarly by $\y(\cdot)-\y_1$, we get 
\begin{equation}\label{pf1.3}
	\frac{1}{2}\frac{\d }{\d t}\|\y(t)-\y_1\|_{\H}^2 + (\Gamma(\y(t)),\y(t)-\y_1)+\rho\|\y(t)-\y_1\|_{\H}=0,
\end{equation}
for a.e. $t\in[0,T]$. One can justify the above calculation by using a smooth approximation of the multi-valued map $\sgn(\cdot)$  (\cite[pp. 110, 213]{VB2}, see \eqref{smap} below). Using the quasi $m$-accretivity of $\Gamma(\cdot)$ (see \eqref{mono}), one gets
\begin{align*}
	(\Gamma(\y),\y-\y_1)&=(\Gamma(\y)-\Gamma(\y_1),\y-\y_1)+(\Gamma(\y_1),\y-\y_1)\no\\&\geq-\kappa\|\y_1-\y_1\|_{\H}^2+(\Gamma(\y_1),\y-\y_1),
\end{align*}
where $\kappa>0$ is a sufficiently large constant specified in Proposition \ref{prop3.1}. This gives together with \eqref{pf1.3}
\begin{align*}
	\frac{1}{2}\frac{\d }{\d t}\|\y(t)-\y_1\|_{\H}^2+ \rho\|\y(t)-\y_1\|_{\H} \leq\kappa\|\y_1(t)-\y_1\|_{\H}^2+\|\Gamma(\y_1)\|_{\H}\|\y(t)-\y_1\|_{\H},
\end{align*}
which implies 
\begin{align*}
	\frac{\d}{\d t}(e^{-\kappa t}\|\y(t)-\y_1\|_{\H})+ (\rho-\|\Gamma(\y_1)\|_{\H})e^{-\kappa t}\leq0,
\end{align*}
for a.e. $t\in[0,T]$. Integration  from $0$ to $t$ yields 
\begin{align*}
	e^{-\kappa t}\|\y(t)-\y_1\|_{\H}+(\rho-\|\Gamma(\y_1)\|_{\H})\left(\frac{1-e^{-\kappa t}}{\kappa} \right)\leq\|\y_0-\y_1\|_{\H},
\end{align*}
for all $t\in[0,T].$ This shows that for $\rho>\|\Gamma(\y_1)\|_{\H},$ we have $\|\y(t)-\y_1\|_{\H}=0$ for all
\begin{align*}
	t\geq T_0:=-\frac{1}{\kappa} \log\left(1-\frac{\kappa\|\y_0-\y_1\|_{\H}}{\rho-\|\Gamma(\y_1)\|_{\H}}\right).
\end{align*}
For sufficiently large $\rho$, one can take $T_0$ just equal to an a-priori fixed $T$. Thus $\v(\cdot):=-\rho(\mathrm{sgn}(\y(\cdot)-\y_1))$ (precisely, the single valued section) is the desired controller for which the solution $\y(\cdot)$ of 
\begin{equation}\label{pf1.4}
	\left\{
	\begin{aligned}
		\frac{\d \y(t)}{\d t}+\mu\A\y(t)+\alpha\y(t)+\mathcal{B}(\y(t))+\beta\mathcal{C}_1(\y(t))+ \gamma\mathcal{C}_2(\y(t))&=\v(t), \ \text{a.e.} \ t\in[0,T], \\
		\y(0)&=\y_0,
	\end{aligned}
	\right.
\end{equation}
satisfies $\y(T)=\y_1$. Moreover, it is clear that this control belongs to the constraint set $$\{\v\in\mathrm{L}^{\infty}(0,T;\H): \ \|\v\|_{\mathrm{L}^{\infty}(0,T;\H)}\leq\rho\},$$ which completes the proof. 
\end{proof}

\begin{lemma}\label{lem3.3}
	Assume that $\y_0,\y_1\in\H$. Then there exists a control $\v\in\mathrm{L}^{\infty}(0,T;\H)$ such that the  solution $\y(\cdot)$ to the problem \eqref{pf1.1}  satisfies  $\y(T)=\y_1$.
\end{lemma}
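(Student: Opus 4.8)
The plan is to upgrade Lemma \ref{exact} from $\D(\A)$-data to $\H$-data by a density argument, exactly along the lines announced in the introduction. Since $\D(\A)$ is dense in $\H$, I would first fix sequences $\y_0^n,\y_1^n\in\D(\A)$ with $\y_0^n\to\y_0$ and $\y_1^n\to\y_1$ in $\H$. For each $n$, Lemma \ref{exact} produces a control $\v_n\in\mathrm{L}^\infty(0,T;\H)$ of the feedback form $\v_n=-\rho_n\,\sgn(\y_n-\y_1^n)$ together with a strong solution $\y_n\in\mathrm{W}^{1,\infty}(0,T;\H)\cap\mathrm{L}^\infty(0,T;\D(\A))\cap\C([0,T];\V)$ of \eqref{pf1.4} with $\y_n(0)=\y_0^n$ and $\y_n(T)=\y_1^n$. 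The goal is then to pass to the limit $n\to\infty$ and produce a single control $\v\in\mathrm{L}^\infty(0,T;\H)$ for which the corresponding solution $\y$ of \eqref{pf1.1} satisfies $\y(T)=\y_1$.

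The key estimates come from the quasi-$m$-accretivity of $\Gamma$ recorded in \eqref{mono}. Testing the difference of two such trajectories $\y_n-\y_m$ against itself and invoking \eqref{mono} together with the fact that $\sgn(\cdot-\y_1^n)$ is the subdifferential of the convex, $1$-Lipschitz map $\|\cdot-\y_1^n\|_{\H}$, one controls $\tfrac{\d}{\d t}\|\y_n-\y_m\|_{\H}^2$ by $\|\y_n-\y_m\|_{\H}^2$ plus terms measuring the mismatch of the data $\y_0^n-\y_0^m$ and $\y_1^n-\y_1^m$. Combined with the monotone decay of $t\mapsto\|\y_n(t)-\y_1^n\|_{\H}$ obtained in the proof of Lemma \ref{exact} (which yields a uniform $\mathrm{L}^\infty(0,T;\H)$ bound on $\y_n$ that is independent of the feedback amplitude $\rho_n$), this should make $\{\y_n\}$ convergent in $\C([0,T];\H)$ to a limit $\y$, whence passing to the limit in $\y_n(T)=\y_1^n$ immediately gives $\y(T)=\y_1$. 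Identifying the limiting equation then proceeds through the weak formulation: the uniform energy bounds furnish weak limits of $\A\y_n$, $\mathcal B(\y_n)$, $\mathcal C_1(\y_n)$, $\mathcal C_2(\y_n)$ in the appropriate dual spaces, and Minty's monotonicity trick (using \eqref{mono} and the accretivity estimate \eqref{C1}) identifies these limits with the corresponding operators evaluated at $\y$.

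The hard part will be the control itself. Because $\y_1\notin\D(\A)$ in general, any approximating sequence $\y_1^n\in\D(\A)$ with $\y_1^n\to\y_1$ in $\H$ forces $\|\A\y_1^n\|_{\H}\to\infty$, hence $\rho_n\sim\|\Gamma(\y_1^n)\|_{\H}\to\infty$, so the controls $\v_n$ are \emph{not} uniformly bounded in $\mathrm{L}^\infty(0,T;\H)$ and one cannot simply extract a weak-$*$ limit. The delicate point is therefore to produce a genuinely bounded limit control: I would recover it from the limiting dynamics, writing $\v$ as the single-valued section of the maximal monotone graph $\mathfrak G$ of Proposition \ref{prop3.2} evaluated along $\y$, and use the demiclosedness of that graph to justify that $\y$ together with $\v\in\mathrm{L}^\infty(0,T;\H)$ solves \eqref{pf1.1} with $\y(T)=\y_1$. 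Controlling the amplitude of this limiting section, that is, showing that the diverging feedback gains $\rho_n$ act only in a manner that survives the limit as a bounded forcing, is the crux of the argument and is precisely where the finite-time extinction structure and the monotonicity of the linear and nonlinear operators must be exploited most carefully.
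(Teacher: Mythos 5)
Your proposal follows the right general theme (density plus Lemma \ref{exact}), but it diverges from the paper's proof at a structural point, and the divergence is exactly where your argument breaks down. The paper never approximates both endpoints simultaneously. It splits the passage to $\H$-data into two \emph{sequential} steps: in Step 1 only the initial datum is approximated ($\y_0^j\in\D(\A)$, $\y_0^j\to\y_0$ in $\H$) while the target $\y_1$ is kept \emph{fixed} in $\D(\A)$. Because the target is fixed, the feedback amplitude $\rho>\|\Gamma(\y_1)\|_{\H}$ is one and the same constant for every $j$, so the controls $\v^j=-\rho\,\sgn(\y^j-\y_1)$ are automatically uniformly bounded in $\mathrm{L}^{\infty}(0,T;\H)$; Banach-Alaoglu applies, the trajectories converge in $\C([0,T];\H)$ by Gronwall and the monotonicity of $\sgn(\cdot-\y_1)$ (which is available precisely because all feedback terms share the \emph{same} target), and the controls are then shown to converge strongly in $\mathrm{L}^2(0,T;\H)$ using the smooth approximation \eqref{smap} of $\sgn$. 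Only afterwards, in Step 2, is the target relaxed to $\y_1\in\H$, by approximating $\y_1^j\to\y_1$, invoking Step 1 for each $j$, and concluding with a triangle inequality. This ordering is not cosmetic: it is the device that keeps the control amplitudes uniform throughout the only limit passage that requires compactness of the controls.

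Your simultaneous approximation creates the obstruction you yourself flag ($\rho_n\sim\|\Gamma(\y_1^n)\|_{\H}\to\infty$), and the repair you sketch cannot close the gap. First, even your Cauchy estimate for $\y_n-\y_m$ does not go through as claimed: the feedback terms $\rho_n\,\sgn(\cdot-\y_1^n)$ and $\rho_m\,\sgn(\cdot-\y_1^m)$ have different targets \emph{and} different amplitudes, so monotonicity only removes part of the cross term; what survives is of size $|\rho_n-\rho_m|$ plus $\rho_m$ times the mismatch of the two signum fields, and neither is small when $\rho_n\to\infty$. Second, recovering $\v$ as a bounded single-valued section of the maximal monotone graph of Proposition \ref{prop3.2} along the limit trajectory is impossible when $\y_1\notin\D(\A)$: the exact-hitting mechanism underlying both Lemma \ref{exact} and your plan is finite-time extinction, i.e.\ the state reaches $\y_1$ and remains there, and on any interval where $\y(t)\equiv\y_1$ the control must equal $\Gamma(\y_1)$, which is not an element of $\H$ for $\y_1\notin\D(\A)$; no section with $\|\v(t)\|_{\H}\leq\rho$ can do this. (Note also that the graphs $\mathfrak{G}$ in your scheme vary with $n$, since the signum term is anchored at $\y_1^n$, so even a graph-convergence argument would need extra work.) This is precisely why the paper confines the feedback construction to targets in $\D(\A)$ and extends to targets in $\H$ last, by a soft limiting argument on the reached states rather than by extracting a limit of the diverging feedback controls.
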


	\begin{proof}
		We prove this result in the following steps:
		\vskip 0.1 cm
		\noindent
		\textbf{Step 1:} $\y_0\in\H$ \emph{and} $\y_1\in\D(\A)$.
		Let us first assume that $\y_0\in\H$ and $\y_1\in\D(\A)$. Then there exists a sequence $\{\y_0^j\}_{j\in\mathbb{N}}\in\D(\A)$ such that $\y_0^j\to\y_0$ in $\H$ as $j\to\infty$. For each $j$, the solution $\y^j(\cdot)$ of the equation
		\begin{equation}\label{sgn}
			\left\{
			\begin{aligned}
				\frac{\d \y^{j}(t)}{\d t}+\Gamma(\y^j(t))+\rho \ \mathrm{sgn}(\y^{j}(t)-\y_1)&\ni\boldsymbol{0}, \ \text{a.e.} \ t\in[0,T], \\
				\y^{j}(0)&=\y_0^{j},
			\end{aligned}
			\right.
		\end{equation}
		where $\Gamma(\cdot):=\mu\A+\alpha\I+\mathcal{B}(\cdot)+\beta\mathcal{C}_1(\cdot)+\gamma\mathcal{C}_2(\cdot)$, satisfies $\y^j(T)=\y_1$. From Lemma \ref{exact}, the above Cauchy problem has unique strong solution $$\y^j\in\W^{1,\infty}(0,T;\H)\cap\mathrm{L}^{\infty}(0,T;\D(\A))\cap \C([0,T];\V).$$On taking inner product with $\y^j(\cdot)-\y_1$ in \eqref{sgn}, we obtain 
		\begin{align*}
			\frac{1}{2}\frac{\d}{\d t}\|\y^j(t)-\y_1\|_{\H}^2+(\Gamma(\y^j(t))-\Gamma(\y_1),\y^j(t)-\y_1) &= -\rho(\mathrm{sgn}(\y^{j}(t)-\y_1),\y^j(t)-\y_1)\nonumber\\&\quad+(\Gamma(\y_1),\y^j(t)-\y_1),
		\end{align*}
		for a.e. $t\in[0,T]$. From Proposition \ref{prop3.1}(cf. \eqref{mono}) and by using the definition of $\mathrm{sgn}(\cdot)$ function, we write
		\begin{align*}
			\frac{1}{2}\frac{\d}{\d t}\|\y^j(t)-\y_1\|_{\H}^2-\kappa\|\y^j(t)-\y_1\|_{\H}^2&\leq -\rho\|\y^{j}(t)-\y_1\|_{\H}+(\Gamma(\y_1),\y^j(t)-\y_1)\nonumber\\&\leq
			-\rho\|\y^{j}(t)-\y_1\|_{\H}+\|\Gamma(\y_1)\|_{\H}\|\y^j(t)-\y_1)\|_{\H},
		\end{align*}
		for a.e. $t\in[0,T]$. Since $\rho$ is sufficiently large, so, we can write
		\begin{align*}
			\frac{1}{2}\frac{\d}{\d t}\|\y^j(t)-\y_1\|_{\H}^2+(\rho-\|\Gamma(\y_1)\|_{\H})\|\y^{j}(t)-\y_1\|_{\H} \leq\kappa\|\y^j(t)-\y_1\|_{\H}^2,
		\end{align*}
		for a.e. $t\in[0,T]$. Thus, by Gronwall's inequality, we get
		\begin{align*}
			\|\y^j(t)-\y_1\|_{\H}^2\leq e^{-\kappa(t-T)}\|\y^j(T)-\y_1\|_{\H}^2,
		\end{align*}
		for all $t\in[0,T]$. This yields that $\y^j(t)=\y_1$ for all $t\geq T$. Let us now set $\v^j(\cdot):=-\rho \ \mathrm{sgn}(\y^j(\cdot)-\y_1)$. Let $\y(\cdot)$ be the solution of the following system:
		\begin{equation}\label{sgn2}
			\left\{
			\begin{aligned}
				\frac{\d \y(t)}{\d t}+\Gamma(\y(t))+\rho \ \mathrm{sgn}(\y(t)-\y_1)&\ni\boldsymbol{0}, \ \text{ for a.e. } t\in[0,T],\\
				\y(0)&=\y_0\in\H.
			\end{aligned}
			\right.
		\end{equation}
		Note that for any $\y_0\in\H$, the system \eqref{sgn2} has unique weak solution (cf. \cite[Theorem 3.3]{sKM1}). On subtracting \eqref{sgn2} from \eqref{sgn} and taking the inner product with $\y^j(\cdot)-\y$, we get 
		\begin{align*}
			&\frac{1}{2}\frac{\d}{\d t}\|\y^j(t)-\y(t)\|_{\H}^2+ (\Gamma(\y^j(t))-\Gamma(\y(t)),\y^j(t)-\y(t))\leq0,
		\end{align*}
		for a.e. $t\in[0,T]$, where we use the monotonicity of $\mathrm{sgn}(\cdot-\y_1)$. 
		From Proposition \ref{prop3.1} (cf. \eqref{mono}), we write
		\begin{align*}
			\frac{\d}{\d t}\|\y^j(t)-\y(t)\|_{\H}^2\leq2\kappa\|\y^j(t)-\y(t)\|_{\H}^2,
		\end{align*}
		for a.e. $t\in[0,T]$. On applying Gronwall's inequality and using the fact that $\y_0^j\to\y_0$ in $\H$ as $j\to\infty$, we deduce 
		\begin{align}\label{315}
			\y^j\to\y \text{ in } \mathrm{C}([0,T];\H).
		\end{align}
		Since the sequence $\{\v^j(\cdot)\}_{k\in\mathbb{N}}$ is bounded in $\mathrm{L}^{\infty}(0,T;\H)$, an application of the Banach-Alaoglu theorem yields the existence of a subsequence (still denoted by the same) such that $\v^j\xrightarrow{w^*}\xi$ in $\mathrm{L}^{\infty}(0,T;\H)$. We consider  the following smooth approximation of the signum function $\mathrm{sgn}(\y)$ defined in \eqref{310} (\cite[pp. 213]{VB2}) for the feedback controller $\v^j(\cdot)=-\rho(\mathrm{sgn}(\y^j(\cdot)-\y_1))$:
		\begin{align}\label{smap}
			\vartheta_{\eps}(\y):=\begin{cases}
				\frac{1}{\e}\y , \  &\text{for }  \ \|\y\|_{\H}\leq\e,\\
				\frac{\y}{\|\y\|_{\H}}, \  &\text{for } \ \|\y\|_{\H}>\e.
			\end{cases}
		\end{align}
		It is clear that as $\eps\to 0$, $\vartheta_{\eps}(\y)\to \zeta$ in $\mathbb{L}^{\infty}(\mathbb{T}^d)$, where $\zeta\in \mathrm{sgn}(\y(x))$ for a.e $x\in\mathbb{T}^d$. 	We now show that $\v^j\to\v$ in $\mathrm{L}^2(0,T;\H)$, as $j\to\infty$, where $\v(\cdot)=-\rho(\mathrm{sgn}(\y(\cdot)-\y_1))$. By using \eqref{pf1.4}, we calculate 
		\begin{align}\label{sigapp}
			\|\v^j-\v\|_{\H}^2&=(\v^j-\v,\v^j-\v)
			\no\\&=-\rho\left\langle\mathrm{sgn}(\y^j-\y_1)-\mathrm{sgn}(\y-\y_1),\frac{\d}{\d t}(\y^j-\y)\right\rangle \no\\&\quad -\rho\left<\mathrm{sgn}(\y^j-\y_1)-\mathrm{sgn}(\y-\y_1), \Gamma(\y^j)-\Gamma(\y)\right>\no\\&=-\rho\lim\limits_{\eps\to0} \underbrace{\left<\vartheta_{\eps}(\y^j-\y_1)-\vartheta_{\eps}(\y-\y_1),\frac{\d}{\d t} (\y^j-\y)\right>}_{I_{\e}^j}\no\\&\quad-\rho\lim\limits_{\eps\to0}\underbrace{\left<\vartheta_{\eps}(\y^j-\y_1) -\vartheta_{\eps}(\y-\y_1), \Gamma(\y^j)-\Gamma(\y)\right>}_{J_{\e}^j}.
		\end{align}
	We consider the following two cases:\\
	\vskip 0.1mm
	\noindent
	\emph{When $\|\y^j(\cdot)-\y_1\|_{\H}\leq\e$:} The convergence given in  \eqref{315} implies that if $\|\y^j(\cdot)-\y_1\|_{\H}\leq\e$,  for sufficiently large $j\in\N$, then $\|\y(\cdot)-\y_1\|_{\H}\leq\e$. Therefore,  for sufficiently large $j\in\N$, we have 
		\begin{align*}
			-\rho I_{\e}^j-\rho J_{\e}^j&=-\frac{\rho}{\e}\left<\y^j-\y,\frac{\d}{\d t}(\y^j-\y)\right> -\frac{\rho}{\e}\left<\y^j-\y,\Gamma(\y^j)-\Gamma(\y)\right>\nonumber\\&\leq-\frac{\rho}{2\e}\frac{\d}{\d t}\|\y^j-\y\|_{\H}^2,
		\end{align*}
	where we have used the  monotonicity of $\Gamma(\cdot)$. 	Integrating over $t\in[0,T]$,  we get
		\begin{align*}
			-&\rho\int_0^TI_{\e}^j(t)\d t-\rho\int_0^TJ_{\e}^j(t)\d t\nonumber\\&\leq-\frac{\rho}{2\e}\|\y^j(T)-\y(T)\|_{\H}^2 +\frac{\rho}{2\e}\|\y^j(0)-\y(0)\|_{\H}^2\nonumber\\&\leq \frac{\rho}{2\e}\|\y^j(0)-\y(0)\|_{\H}^2\leq\frac{\rho}{\e}\left(\|\y^j(0)-\y_1\|^2_{\H}+\|\y_1-\y(0)\|_{\H}^2\right)\nonumber\\&\leq 2\rho\e,
		\end{align*}
so that the existence of the limit in \eqref{sigapp} provides 
	\begin{align*}
		-\rho \lim\limits_{\e\to 0}\left[\int_0^TI_{\e}^j(t)\d t+\int_0^TJ_{\e}^j(t)\d t\right]\leq 0\Rightarrow  \int_0^T \|\v^j(t)-\v(t)\|_{\H}^2\d t\leq 0, 
	\end{align*}
	for sufficiently large $j\in\N$. 
		Thus, one can conclude that $\v^j\to\v$ in $\mathrm{L}^2(0,T;\H)$ as $j\to\infty$. 
	\\
	\vskip 0.1mm
	\noindent
	\emph{When $\|\y^j(\cdot)-\y_1\|_{\H}>\e$:}
		For the case $\|\y^j(\cdot)-\y_1\|_{\H}>\e$, one can calculate from \eqref{sigapp}
		\begin{align}\label{d1d2}
			-\rho I_{\e}^j-\rho J_{\e}^j&=-\rho\underbrace{\left<\frac{\y^j-\y_1}{\|\y^j-\y_1\|_{\H}} -\frac{\y-\y_1}{\|\y-\y_1\|_{\H}},\frac{\d}{\d t}(\y^j-\y)\right>}_{D^j_1} \nonumber\\&\quad-\rho\underbrace{\left<\frac{\y^j-\y_1}{\|\y^j-\y_1\|_{\H}} -\frac{\y-\y_1}{\|\y-\y_1\|_{\H}},\frac{\d}{\d t}(\y^j-\y)\right>}_{D^j_2}.
		\end{align}
		Let us now calculate $D^j_1$ as
		\begin{align*}
			D^j_1&=\left<\frac{\y^j-\y_1}{\|\y^j-\y_1\|_{\H}},\frac{\d}{\d t}(\y^j-\y)\right> -\left<\frac{\y-\y_1}{\|\y-\y_1\|_{\H}},\frac{\d}{\d t}(\y^j-\y)\right>\nonumber\\&=
			\frac{1}{\|\y^j-\y_1\|_{\H}}\left<\y^j-\y,\frac{\d}{\d t}(\y^j-\y)\right>+ \frac{1}{\|\y^j-\y_1\|_{\H}}\left<\y-\y_1,\frac{\d}{\d t}(\y^j-\y)\right>\nonumber\\&\quad+ \frac{1}{\|\y-\y_1\|_{\H}}\left<\y^j-\y,\frac{\d}{\d t}(\y^j-\y)\right>-
			\frac{1}{\|\y-\y_1\|_{\H}}\left<\y^j-\y_1,\frac{\d}{\d t}(\y^j-\y)\right>%\nonumber\\&=	\frac{1}{2\|\y^j-\y_1\|_{\H}}\frac{\d}{\d t}\|\y^j-\y\|_{\H}^2+ \frac{1}{2\|\y-\y_1\|_{\H}}\frac{\d}{\d t}\|\y^j-\y\|_{\H}^2\nonumber\\&\quad+ \frac{1}{\|\y^j-\y_1\|_{\H}}\left<\y-\y_1,\frac{\d}{\d t}(\y^j-\y)\right>-		\frac{1}{\|\y-\y_1\|_{\H}}\left<\y^j-\y_1,\frac{\d}{\d t}(\y^j-\y)\right>
			\nonumber\\&=
			\frac{1}{2}\left(\frac{1}{\|\y-\y_1\|_{\H}}+ \frac{1}{\|\y^j-\y_1\|_{\H}}\right)\frac{\d}{\d t} \|\y^j-\y\|_{\H}^2\nonumber\\&\quad+\left<\frac{\y-\y_1}{\|\y^j-\y_1\|_{\H}}-\frac{\y^j-\y_1}{\|\y-\y_1\|_{\H}},\frac{\d}{\d t}(\y^j-\y)\right>.
		\end{align*}
		Similarly, we estimate 
		\begin{align*}
			D^j_2&=\left(\frac{1}{\|\y-\y_1\|_{\H}}+ \frac{1}{\|\y^j-\y_1\|_{\H}}\right) \langle\y^j-\y,\Gamma(\y^j)-\Gamma(\y)\rangle\nonumber\\&\quad+ \left<\frac{\y-\y_1}{\|\y^j-\y_1\|_{\H}}-\frac{\y^j-\y_1}{\|\y-\y_1\|_{\H}},\Gamma(\y^j)-\Gamma(\y)\right>.
		\end{align*} 
	Therefore, combining the above estimates, we have 
		\begin{align}\label{3.12}
			D^j_1+D^j_2&=\bigg(\frac{1}{\|\y-\y_1\|_{\H}}+\frac{1}{\|\y^j-\y_1\|_{\H}}\bigg) \bigg[\frac{1}{2}\frac{\d}{\d t}\|\y^j-\y_1\|_{\H}^2+ \langle\y^j-\y,\Gamma(\y^j)-\Gamma(\y)\rangle\bigg]\nonumber\\&\quad
             +\left<\frac{\y-\y_1}{\|\y^j-\y_1\|_{\H}}-\frac{\y^j-\y_1}{\|\y-\y_1\|_{\H}},\frac{\d}{\d t} (\y^j-\y)+\Gamma(\y^j)-\Gamma(\y)\right>.
		\end{align}
	Now, for $\|\y^j-\y_1\|_{\H}>\e$, we  infer from \eqref{sgn} and \eqref{sgn2} that  for a.e. $t\in[0,T],$ 
	\begin{align*}
		\frac{\d\y^j}{\d t}+\Gamma(\y^j)+\rho\frac{\y^j-\y_1}{\|\y^j-\y_1\|_{\H}}=\boldsymbol{0} \ \text{ and }\ 
 	\frac{\d\y}{\d t}+\Gamma(\y)+\rho\frac{\y-\y_1}{\|\y-\y_1\|_{\H}}=\boldsymbol{0},
 \end{align*}
in $\V'$. Subtracting the above two  systems, we obtain for a.e. $ t\in[0,T],$ 
\begin{align}\label{dd1}
	\frac{\d}{\d t}(\y^j-\y)+\Gamma(\y^j)-\Gamma(\y)= -\rho\bigg[\frac{\y^j-\y_1}{\|\y^j-\y_1\|_{\H}}-\frac{\y-\y_1}{\|\y-\y_1\|_{\H}}\bigg],
\end{align}
in $\V'$. 	On taking the  inner product with $\y^j-\y$, we deduce 
	\begin{align}\label{dd2}
\frac{1}{2}\frac{\d}{\d t}\|\y^j-\y\|_{\H}^2+\langle\y^j-\y,\Gamma(\y^j)-\Gamma(\y)\rangle=-\rho  \left(\frac{\y^j-\y_1}{\|\y^j-\y_1\|_{\H}}-\frac{\y-\y_1}{\|\y-\y_1\|_{\H}},\y^j-\y\right).
	\end{align}	 
for a.e. $t\in[0,T]$. Substituting  \eqref{dd1}-\eqref{dd2} in \eqref{3.12}, we arrive at
\begin{align*}
	D^j_1+D^j_2=&-\rho\bigg(\frac{1}{\|\y-\y_1\|_{\H}}+\frac{1}{\|\y^j-\y_1\|_{\H}}\bigg) \bigg[ \left(\frac{\y^j-\y_1}{\|\y^j-\y_1\|_{\H}}-\frac{\y-\y_1}{\|\y-\y_1\|_{\H}},\y^j-\y\right)\bigg]\nonumber\\&\quad
	-\rho\left(\frac{\y^j-\y_1}{\|\y^j-\y_1\|_{\H}}-\frac{\y-\y_1}{\|\y-\y_1\|_{\H}},\frac{\y-\y_1}{\|\y^j-\y_1\|_{\H}}-\frac{\y^j-\y_1}{\|\y-\y_1\|_{\H}}\right).
\end{align*}
Thus, from \eqref{d1d2}, we have
\begin{align*}
-\rho I_{\e}^j-\rho J_{\e}^j&=\rho^2\bigg(\frac{1}{\|\y-\y_1\|_{\H}}+\frac{1}{\|\y^j-\y_1\|_{\H}}\bigg) \bigg[\left(\frac{\y^j-\y_1}{\|\y^j-\y_1\|_{\H}}-\frac{\y-\y_1}{\|\y-\y_1\|_{\H}},\y^j-\y\right)\bigg]\nonumber\\&\quad
+\rho^2\left(\frac{\y^j-\y_1}{\|\y^j-\y_1\|_{\H}}-\frac{\y-\y_1}{\|\y-\y_1\|_{\H}},\frac{\y-\y_1}{\|\y^j-\y_1\|_{\H}}-\frac{\y^j-\y_1}{\|\y-\y_1\|_{\H}}\right)
%\nonumber\\&=\rho^2\left(\frac{\y^j-\y_1}{\|\y^j-\y_1\|_{\H}}-\frac{\y-\y_1}{\|\y-\y_1\|_{\H}},\frac{\y^j-\y}{\|\y-\y_1\|_{\H}}+\frac{\y^j-\y}{\|\y^j-\y_1\|_{\H}}\right)\nonumber\\&\quad+\rho^2\left(\frac{\y^j-\y_1}{\|\y^j-\y_1\|_{\H}}-\frac{\y-\y_1}{\|\y-\y_1\|_{\H}},\frac{\y-\y_1}{\|\y^j-\y_1\|_{\H}}-\frac{\y^j-\y_1}{\|\y-\y_1\|_{\H}}\right)
\nonumber\\&=
\rho^2\left(\frac{\y^j-\y_1}{\|\y^j-\y_1\|_{\H}}-\frac{\y-\y_1}{\|\y-\y_1\|_{\H}},\frac{\y^j-\y_1}{\|\y^j-\y_1\|_{\H}}-\frac{\y-\y_1}{\|\y-\y_1\|_{\H}}\right)\nonumber\\&=
2\rho^2\bigg(1-\left(\frac{\y^j-\y_1}{\|\y^j-\y_1\|_{\H}},\frac{\y-\y_1}{\|\y-\y_1\|_{\H}}\right)\bigg).
\end{align*}
Integrating the above expression over $[0,T]$ and then taking limit $\varepsilon\to 0$ in the above expression, we find 
  \begin{align*}
-\rho\lim\limits_{\e\to0} \int_0^T( I_{\e}^j(t)+J_{\e}^j(t))\d t=& 2\rho^2\int_0^T\bigg(1-\left(\frac{\y^j(t)-\y_1}{\|\y^j(t)-\y_1\|_{\H}},\frac{\y(t)-\y_1}{\|\y(t)-\y_1\|_{\H}}\right)\bigg)\d t.
  \end{align*}
    Using \eqref{sigapp} and  making the use of the strong convergence given in \eqref{315}, we have 
    \begin{align*}
    	\lim\limits_{j\to\infty}\int_0^T \|\v^j(t)-\v(t)\|_{\H}^2\d t= 0. 
    	\end{align*}
 Thus in both cases, we infer that $\v^j\to\v$ in $\mathrm{L}^2(0,T;\H)$ as $j\to\infty$. Finally, by the uniqueness of the weak limit, we conclude that $\xi=\v$.
  
  We have shown that (see Lemma \ref{exact}) the solution $\y^j(\cdot):=\y^j(\cdot,\y_0^j)$ of the system 
		\begin{equation}\label{pf1.5}
			\left\{
			\begin{aligned}
				\frac{\d \y^j(t)}{\d t}+\mu\A\y^j(t)+\alpha\y^j(t)+ \mathcal{B}(\y^j(t))+\beta\mathcal{C}_1(\y^j(t))+ \gamma\mathcal{C}_2(\y^j(t))&= \v^j(t), \ \text{a.e.} \ t\in[0,T], \\
				\y^j(0)&=\y_0^j,
			\end{aligned}
			\right.
		\end{equation}
		with $\y_0^j\in\D(\A)$ and $\y_1\in\D(\A)$ satisfies 
		\begin{align}\label{317}
			\y^j(T,\y_0^j)-\y_1=0. 
		\end{align} 
		Let $\y(\cdot,\y_0)$ denote the solution of \eqref{pf1.4} corresponding to the initial data $\y_0\in\H$ and forcing $\v\in\mathrm{L}^{\infty}(0,T;\H)$. Then, by using \eqref{315} and \eqref{317}, we have 
		\begin{align*}
			\|\y(T,\y_0)-\y_1\|_{\H}\leq\|\y(T,\y_0)-\y^j(T,\y_0^j)\|_{\H}+\|\y^j(T,\y_0^j)-\y_1\|_{\H}\to 0,
		\end{align*}
		as $j\to\infty$, 	which implies that 
		\begin{align}\label{finite}
			\y(T,\y_0)=\y_1 \  \text{ in } \   \H. 
		\end{align}
		Thus, by the continuity of the map $t\mapsto\y^j(t)=\y^j(t,\y_0^j)$ (from the strong convergence \eqref{315}), we are able to extend the exact controllability result to all $\y_0\in\H$. 
		\vskip 0.1 cm
		\noindent
		\textbf{Step 2:} \emph{$\y_0\in\H$ and $\y_1\in\H$.} Let us now suppose that $\y_0\in\H$ and $\y_1\in\H$.
		Further, for $\y_1\in\H$, there exists $\y_1^{j}\in\D(\A)$ such that $\y_1^j\to\y_1$ in $\H$ as $j\to\infty$.	
		Now, as $\y_1^j\in\D(\A)$, then for a given $\y_0\in\H,$ there exists a control $\v$ such that $\y(T)=\y_1^j$ in $\H$, where $\y(\cdot)$ is the weak solution of \eqref{sgn2}. Then, we conclude that
		\begin{align*}
			\|\y(T)-\y_1\|_{\H}\leq\|\y(T)-\y_1^j\|_{\H}+\|\y_1^j-\y_1\|_{\H}\to 0\ \text{ as }\ j\to\infty,
		\end{align*}
		which implies that $\y(T)=\y_1$ in $\H$ for $\y_1\in\H$. Finally, we are able to extend the exact controllability result to all $\y_1\in\H$.
	\end{proof}

\begin{lemma}\label{dense}
	The set $\{\mathrm{B}\u: \u\in\mathrm{C}([0,T];\U)\}$ is dense in $\mathrm{L}^2(0,T;\H)$.
\end{lemma}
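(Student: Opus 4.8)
The plan is to realize an arbitrary target in $\mathrm{L}^2(0,T;\H)$ as an $\mathrm{L}^2$-limit of admissible controls of the form $\mathrm{B}\u(\cdot)$ with $\u$ continuous, so that the abstract control $\v\in\mathrm{L}^{\infty}(0,T;\H)$ produced in Lemmas \ref{exact} and \ref{lem3.3} can be matched, up to arbitrarily small $\mathrm{L}^2$-error, by a control entering the original system \eqref{pf1} through $\mathrm{B}$. The argument rests on two density facts: first, that continuous $\H$-valued functions are dense in $\mathrm{L}^2(0,T;\H)$; and second, that the range of $\mathrm{B}$ is dense in $\H$, which is the standing non-degeneracy requirement on the control operator. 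This second input is genuinely necessary, since $\{\mathrm{B}\u\}\subset\mathrm{L}^2(0,T;\overline{\mathrm{Range}(\mathrm{B})})$ forces $\overline{\mathrm{Range}(\mathrm{B})}=\H$ for the claimed density to hold; in the simplest case $\mathrm{B}=\mathrm{I}$ it is vacuous and the lemma collapses to the first fact.

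First I would reduce the problem. Since $\mathrm{C}([0,T];\H)$ is dense in $\mathrm{L}^2(0,T;\H)$ (approximate by $\H$-valued step functions, then replace each indicator by a continuous piecewise-linear ramp), it suffices to approximate a fixed $\v\in\mathrm{C}([0,T];\H)$ in the $\mathrm{L}^2(0,T;\H)$-norm by functions $\mathrm{B}\u$ with $\u\in\mathrm{C}([0,T];\U)$. Fix $\e>0$. By uniform continuity of $\v$ on the compact interval $[0,T]$, I would choose a partition $0=t_0<t_1<\cdots<t_N=T$ fine enough that $\|\v(t)-\v(s)\|_{\H}<\e$ whenever $|t-s|$ does not exceed the mesh. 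Using density of $\mathrm{Range}(\mathrm{B})$, pick $\u_i\in\U$ at the nodes with $\|\mathrm{B}\u_i-\v(t_i)\|_{\H}<\e$, and define $\u(\cdot)$ by linear interpolation of the $\u_i$ between consecutive nodes, so that $\u\in\mathrm{C}([0,T];\U)$.

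On each subinterval $[t_i,t_{i+1}]$, linearity of $\mathrm{B}$ gives $\mathrm{B}\u(t)=\lambda\,\mathrm{B}\u_i+(1-\lambda)\mathrm{B}\u_{i+1}$ with $\lambda=\lambda(t)\in[0,1]$, and I would split
\begin{align*}
\|\mathrm{B}\u(t)-\v(t)\|_{\H}&\leq \lambda\|\mathrm{B}\u_i-\v(t_i)\|_{\H}+(1-\lambda)\|\mathrm{B}\u_{i+1}-\v(t_{i+1})\|_{\H}\\
&\quad+\lambda\|\v(t_i)-\v(t)\|_{\H}+(1-\lambda)\|\v(t_{i+1})-\v(t)\|_{\H}<2\e,
\end{align*}
the first two terms being controlled by the nodal estimate and the last two by uniform continuity. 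Integrating in time yields $\|\mathrm{B}\u-\v\|_{\mathrm{L}^2(0,T;\H)}\leq 2\e\sqrt{T}$, and sending $\e\to0$ completes the approximation.

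The only real subtlety — and the step I would treat most carefully — is producing a control that is genuinely continuous in time rather than merely a step function: a naive piecewise-constant choice $\u(t)\equiv\u_i$ on $[t_i,t_{i+1})$ approximates equally well in $\mathrm{L}^2$ but lies outside $\mathrm{C}([0,T];\U)$. The linear interpolation circumvents this precisely because $\mathrm{B}$ is linear, so $\mathrm{B}\u$ interpolates the $\mathrm{B}\u_i$ and the convex-combination estimate above remains uniform. Everything else is routine measure theory; the dense-range property of $\mathrm{B}$ is the one structural hypothesis without which the statement would fail.
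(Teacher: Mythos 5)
Your proof is correct, but it does not follow the paper's route, because the paper has no route of its own: the proof of Lemma \ref{dense} is a bare citation of \cite[Lemma 2.4]{vbd}, where the statement is proved under hypotheses on the control operator that include exactly the non-degeneracy you isolate. Your observation that dense range of $\mathrm{B}$ is indispensable is right and worth stressing: in Section \ref{apcon} the paper only assumes $\mathrm{B}\in\mathcal{L}(\U,\H)$, and the lemma as literally stated fails for, e.g., $\mathrm{B}=0$; the missing hypothesis is restored in the paper's actual application, where $\mathrm{B}=\sqrt{\Q}$ with $\ker\Q=\{\boldsymbol{0}\}$, so that each eigenfunction satisfies $\boldsymbol{e}_k=\sqrt{\Q}\left(\boldsymbol{e}_k/\sqrt{\mu_k}\right)$ and the range of $\mathrm{B}$ is dense in $\H$. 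Granting that input, your two-step argument (density of $\mathrm{C}([0,T];\H)$ in $\mathrm{L}^2(0,T;\H)$, then nodal approximation of a continuous target with linear interpolation of preimages) is complete, and the convex-combination estimate on each subinterval is accurate, giving the explicit bound $2\e\sqrt{T}$. The proof behind the citation is the shorter duality argument: if $\f\in\mathrm{L}^2(0,T;\H)$ is orthogonal to every $\mathrm{B}\u$ with $\u\in\mathrm{C}([0,T];\U)$, then testing with $\u(t)=\varphi(t)\u_0$ for scalar continuous $\varphi$ and $\u_0\in\U$ yields $(\f(t),\mathrm{B}\u_0)=0$ for a.e. $t\in[0,T]$, hence $\mathrm{B}^*\f(t)=\boldsymbol{0}$ a.e. (using separability of $\U$ to fix a common null set), and density of the range of $\mathrm{B}$ is equivalent to injectivity of $\mathrm{B}^*$, so $\f=\boldsymbol{0}$. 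What your constructive version buys is independence from the adjoint formalism and a quantitative rate; what the duality version buys is brevity and no interpolation bookkeeping. Either way, you identified the one structural hypothesis on which the lemma genuinely rests, which the paper leaves implicit in the citation.
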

\begin{proof}
	See \cite[Lemma 2.4]{vbd}. 
\end{proof}

\begin{proof}[Proof of Proposition \ref{prop31}]\label{apcon1}
	Since the operator  $\Gamma(\cdot):=\mu\A+\alpha\I+\mathcal{B}(\cdot)+\beta\mathcal{C}_1(\cdot)+\gamma\mathcal{C}_2(\cdot)$ is a quasi $m$-accretive operator in $\H$, by Lemma \ref{lem3.3}, there exists a control $\v\in\mathrm{L}^{\infty}(0,T;\H)\subset\mathrm{L}^{2}(0,T;\H)$ such that $\y(T)=\y_1$ and 
\begin{equation}\label{pf1.1111}
	\left\{
	\begin{aligned}
		\frac{\d \y(t)}{\d t} +\Gamma(\y(t))&= \v(t), \text{ for a.e. } \ t\in[0,T],\\ 
		\y(0)&=\y_0\in\H.
	\end{aligned}
	\right.
\end{equation}	
By Lemma \ref{dense}, for any $\varepsilon>0$, there exists $\u_{\e}\in\C([0,T];\U)$ such that $$\|\mathrm{B}\u_{\e}-\v\|_{\mathrm{L}^2(0,T;\H)}\leq \e.$$  Let $\y_\e\in\mathrm{C}([0,T;\H])\cap\mathrm{L}^2(0,T;\V)\cap\mathrm{L}^{r+1}(0,T;\wi\L^{r+1})$ be the solution of 
\begin{equation}\label{pf1.11}
	\left\{
	\begin{aligned}
		\frac{\d \y_\e(t)}{\d t} +\Gamma(\y_\e(t))&= \mathrm{B}\u_\e(t), \text{ for a.e. } \ t\in[0,T],\\ 
		\y_\e(0)&=\y_0\in\H.
	\end{aligned}
	\right.
\end{equation}	
Subtracting \eqref{pf1.11} from \eqref{pf1}, we obtain  for a.e. $t\in[0,T]$ in $\V'$, 
\begin{align*}
	\frac{\d}{\d t}(\y_\e(t)-\y(t))+\Gamma(\y_\e(t))-\Gamma(\y(t))=\mathrm{B}\u_\e(t)-\v(t). 
\end{align*}
Taking the inner product with $\y_\e(\cdot)-\y(\cdot)$ and using the $m$-accretivity of $\Gamma(\cdot)$, we deduce 
\begin{align*}
	\frac{1}{2}\frac{\d}{\d t}\|\y_\e(t)-\y(t)\|_{\H}^2\leq \kappa\|\y_\e(t)-\y(t)\|_{\H}^2+(\mathrm{B}\u_\e(t)-\v(t),\y_\e(t)-\y(t)),
\end{align*}
for a.e. $t\in[0,T]$ and it implies that 
\begin{align*}
	\frac{\d}{\d t}\|\y_\e(t)-\y(t)\|_{\H}\leq\kappa\|\y_\e(t)-\y(t)\|_{\H}+\|\mathrm{B}\u_\e(t)-\v(t)\|_{\H}.
\end{align*}
An application of Gronwall's inequality yields 
\begin{align*}
	\|\y_\e(T)-\y(T)\|_{\H}\leq\int_0^T e^{\kappa(T-t)}\|\mathrm{B}\u_\e(t)-\v(t)\|_{\H}\d t\leq\frac{e^{2\kappa T}}{2\kappa}\sqrt{\e}. 
\end{align*}
The above estimate implies 
\begin{align*}
	\|\y_\e(T)-\y_1\|_{\H}\leq\|\y_\e(T)-\y(T)\|_{\H}+\|\y(T)-\y_1\|_{\H}\leq\frac{e^{2\kappa T}}{2\kappa}\sqrt{\e}, 
\end{align*}
which completes the proof of approximate controllability of the system \eqref{pf1}. 
\end{proof}

\section{Stochastic Convective Brinkmann-Forchheiemer Extended Darcy Equations}\label{stochastic}\setcounter{equation}{0}
Let $(\Omega,\mathscr{F},\P)$ be a complete probability space equipped with an increasing family of sub-sigma fields $\{\mathscr{F}_t\}_{t\geq0}$ of $\mathscr{F}$ satisfying the usual conditions (that is, a normal filtration). The noise term on the  stochastic basis $(\Omega,\mathscr{F},\{\mathscr{F}_t\}_{t\geq0},\P)$ is described by  a cylindrical Wiener process $\{\W(t)\}_{t\geq 0}$ on $\H$,  and a covariance operator $\Q$. Let $\mathcal{L}(\H,\H)$ be the space of all bounded linear operators on $\H$. Let the covariance operator  $\mathrm{Q}\in\mathcal{L}(\H,\H)$ be  such that $\mathrm{Q}$ is positive, symmetric and trace class operator with ker $\mathrm{Q}=\{\boldsymbol{0}\}$. We assume that there exists a complete orthonormal system $\{\boldsymbol{e}_k\}_{k\in\mathbb{N}}$ in $\mathbb{H}$ of the covariance operator $\Q$ and a bounded sequence $\{\mu_k\}_{k\in\mathbb{N}}$ of positive real numbers such that $\Q \boldsymbol{e}_k=\mu_k \boldsymbol{e}_k,\ k\in\N$. Here $\mu_k$ is an eigenvalue corresponding to the eigenfunction $\boldsymbol{e}_k$ such that following holds:
\begin{align*}
	\Tr\Q=\sum\limits_{k=1}^{\infty} \mu_k<\infty \ \text{ and }\  \sqrt{\Q}\y=\sum\limits_{k=1}^{\infty}\sqrt{\mu_k}(\y,\boldsymbol{e}_k)\boldsymbol{e}_k,  \ \text{ for }\  \y\in\H.
\end{align*}
We know that the stochastic process $\{\W(t)\}_{t\geq0}$ is an $\H$-valued cylindrical Wiener process with respect to a filtration $\{\mathscr{F}_t\}_{t\geq0}$ if and only if for any $t\geq0,$ the process $\{\W(t)\}_{t\geq 0}$ can be expressed as $\W(t)=\sum\limits_{k=1}^{\infty} \beta_k(t)\boldsymbol{e}_k$ (cf. \cite{gdp}), where $\{\beta_k(\cdot), \ k\in\N\}$, is a family of real-valued independent Brownian motions on the probability space $\left(\Omega,\mathscr{F},\{\mathscr{F}_t\}_{t\geq 0},\P\right)$. In this section, we consider the following stochastic CBFeD equations subjected to an external deterministic force $\f$, driven by an additive Gaussian noise in $\V'+\wi\L^{\frac{r+1}{r}}$:
\begin{equation}\label{scbf}
	\left\{
	\begin{aligned}
		\d\X(t)+\{\mu\A\X(t)+\mathcal{B}(\X(t))+\alpha\X(t)+\beta\mathcal{C}_1(\X(t))+\gamma\mathcal{C}_2(\X(t))\}\d t&=\f\d t+ \sqrt{\mathrm{Q}}\d\W(t),\\ 
		\X(0)&=\x,
	\end{aligned}
\right.
\end{equation}
for a.e. $t\in[0,T]$, where $\x\in\H$, $\sqrt{\mathrm{Q}}\d\W$ is a colored noise defined on $(\Omega,\mathscr{F},\{\mathscr{F}_t\}_{t\geq 0},\P)$ with values in $\H$ such that $\sqrt{\Q}\W(t)=\sum\limits_{k=1}^{\infty}\sqrt{\mu_k}\boldsymbol{e}_k\beta_k(t)$. We shall assume further that \begin{align}\label{423}\sqrt{\Q}\in\mathcal{L}(\mathbb{U},\mathbb{V}),\end{align} where $\mathbb{U}$ is a Hilbert space, $\mathbb{H}\subset\mathbb{U}$ and the injection of $\H$ into $\mathbb{U}$ is Hilbert-Schmidt, that is, there exits a one-to-one map $\J:\H\to\U$ with $\y\mapsto\J\y=\y$ such that the Hilbert-Schmidt norm $\|\J\|_{\mathrm{L}_2(\H,\U)}<\infty$. Note that Hilbert-Schmidt operators are compact, so one cannot expect the Hilbert-Schmidt operators from the same Hilbert space to themselves in an infinite-dimensional setup.

\begin{example}\label{exmp4.1}
	For $\e>\frac{d}{2}+1$, one can take $\Q=(\I+\A)^{-\e},$ $\{\mu_k\}_{k\in\N}=\{(1+\lambda_k)^{-\e}\}_{k\in\N}$ and $\mathbb{U}=\V_{-(\e-1)}=\D((\I+\A)^{-\left(\frac{\e-1}{2}\right)})$. Note that the asymptotic growth of  $\lambda_k$ are given by $\lambda_k\sim(k)^{\frac{2}{d}}$, for $k=1,2,\ldots$ (cf. \cite{fmrt}). Then, $\sqrt{\Q}\in\mathcal{L}(\U,\V)$. Indeed for any $\y\in\U$, we find
	\begin{align*}
		\|\sqrt{\Q}\y\|_{\V}^2= \|(\I+\A)^{\frac{1}{2}}\sqrt{\Q}\y\|_{\H}^2= \|(\I+\A)^{\frac{1}{2}-\frac{\e}{2}}\y\|_{\H}^2=\|\y\|_{\D((\I+\A)^{-\left(\frac{\e-1}{2}\right)})}^2<\infty.
	\end{align*} 
Note that, since $\V\hookrightarrow\H$, therefore $\sqrt{\Q}\in\mathcal{L}(\U,\H)$ also. Moreover, $\Q$ is a trace class operator, since 
	\begin{align*}
		\Tr\Q&=\sum\limits_{k=1}^{\infty} (\Q \boldsymbol{e}_k,\boldsymbol{e}_k)=\sum\limits_{k=1}^{\infty} ((\I+\A)^{-\e} \boldsymbol{e}_k,\boldsymbol{e}_k)=\sum\limits_{k=1}^{\infty} (1+\lambda_k)^{-\e}\sim\sum\limits_{k=1}^{\infty} \frac{1}{(1+k^{\frac{2}{d}})^{\e}}\nonumber\\&\leq\sum\limits_{k=1}^{\infty} \frac{1}{k^{\frac{2\e}{d}}}<\infty,
	\end{align*}
provided $\frac{2\e}{d}>1$ or $\e>\frac{d}{2}$. Furthermore, for $\e>\frac{d}{2}+1$, the embedding $\H\hookrightarrow\V_{-(\e-1)}$ is Hilbert-Schmidt, that is, the map $\mathrm{J}:\V_{-(\e-1)}\to\H$ is a Hilbert-Schmidt operator. Indeed
\begin{align*}
	\|\mathrm{J}\|_{\mathrm{L}_2(\H,\U)}^2=\sum\limits_{k=1}^{\infty} \|\mathrm{J}\boldsymbol{e}_k\|_{\U}^2=\sum\limits_{k=1}^{\infty} \|\boldsymbol{e}_k\|_{\U}^2=\sum\limits_{k=1}^{\infty} ((\I+\A)^{-\e+1} \boldsymbol{e}_k,\boldsymbol{e}_k)\leq\sum\limits_{k=1}^{\infty}\frac{1}{k^{\frac{2}{d}(\e-1)}} <\infty,
\end{align*}
provided $\e>\frac{d}{2}+1$. Let us define $\wi\Q:=\A\Q$ and we calculate  
\begin{align*}
	\Tr\wi\Q=\sum\limits_{k=1}^{\infty} (\wi\Q\boldsymbol{e}_k,\boldsymbol{e}_k)_{\H}&= \sum\limits_{k=1}^{\infty} (\A(\I+\A)^{-\e} \boldsymbol{e}_k,\boldsymbol{e}_k)= \sum\limits_{k=1}^{\infty} ((1+\lambda_k)^{-\e}\A \boldsymbol{e}_k,\boldsymbol{e}_k)\nonumber\\&=\sum\limits_{k=1}^{\infty} \lambda_k(1+\lambda_k)^{-\e}\leq\sum\limits_{k=1}^{\infty} \lambda_k^{-\e+1}\sim\sum\limits_{k=1}^{\infty} k^{-\frac{2}{d}(\e-1)}<\infty,
\end{align*}
provided $\frac{2}{d}(\e-1)>1$ or $\e>\frac{d}{2}+1$. Thus the covariance operator $\Q=(\I+\A)^{-\e}$ has the following two properties:
\begin{equation*}
	\left\{
	\begin{aligned}
		\Tr\Q&<\infty, \ \text{ for } \e>\frac{d}{2}, \\
		\Tr(\A\Q)&<\infty, \ \text{ for } \e>\frac{d}{2}+1.
	\end{aligned}
	\right.
\end{equation*}	
\end{example}

	\begin{definition}[Global strong solution]\label{def4.2}
		An $\H$-valued $(\mathscr{F}_t)_{t\geq 0}$-adapted stochastic process $\X(\cdot)$ is called a strong solution to the system \eqref{scbf} if the following conditions are satisfied:
		\begin{enumerate}
			\item [(i)] the process $\X\in\mathrm{L}^2(\Omega;\mathrm{L}^{\infty}(0,T;\H)\cap\mathrm{L}^2(0,T;\V))\cap\mathrm{L}^{r+1}(\Omega;\mathrm{L}^{r+1}(0,T;\widetilde{\L}^{r+1}))$ and $\X(\cdot)$ has a $\V\cap\widetilde{\L}^{r+1}$-valued  modification, which is progressively measurable with continuous paths in $\H$ and $\X\in\C([0,T];\H)\cap\mathrm{L}^2(0,T;\V)\cap\mathrm{L}^{r+1}(0,T;\widetilde{\L}^{r+1})$, $\mathbb{P}$-a.s.,
			\item [(ii)] the following equality holds for every $t\in [0, T ]$, as an element of $\V'+\wi\L^{\frac{r+1}{r}},$ 
			\begin{align*}
				\X(t)&=\x-\int_0^t\left[\mu \A\X(s)+\alpha\X(s)+\B(\X(s))+\beta\mathcal{C}_1(\X(s))+ \gamma\mathcal{C}_1(\X(s))+\f(s)\right]\d s\nonumber\\&\quad+\int_0^t\sqrt{\Q}\d \W(s),\ \mbox{$\mathbb{P}$-a.s.}
			\end{align*}
		\end{enumerate}
	\end{definition}
An alternative version of the condition (ii) is to require that for any  $\v\in\V\cap\widetilde{\L}^{r+1}$:
\begin{align}\label{4.5}
	(\X(t),\v)&=(\x,\v)-\int_0^t\langle\mu\A\X(s)+\alpha\X(s)+\B(\X(s))+\beta\mathcal{C}_1(\X(s)) +\gamma\mathcal{C}_2(\X(s)),\v\rangle\d s\no\\&\quad+\int_0^t\left(\sqrt\Q\d \W(s),\v\right),\ \mathbb{P}\text{-a.s.},
\end{align}
for all $t\in[0,T]$.
\begin{definition}[Pathwise uniqueness]
		A strong solution $\X(\cdot)$ to \eqref{scbf} is called \emph{pathwise  unique} if $\widetilde{\X}(\cdot)$ is an another strong solution, then $$\mathbb{P}\Big\{\omega\in\Omega:\X(t)=\widetilde{\X}(t),\ \text{ for all }\ t\in[0,T]\Big\}=1.$$
\end{definition}
The following result on the existence and uniqueness of strong solutions for the system \eqref{scbf} is proved in \cite{MT4}.

\begin{theorem}\label{regularity}
	Let $\x\in\H$ and $\f\in\mathrm{L}^2(0,T;\V')$ be given and $\Tr(\Q)<\infty$. Then there exists a \emph{unique  strong solution} to the system \eqref{scbf} in the sense of Definition \ref{def4.2} satisfying 
	\begin{align}\label{429}
		&\E\left[\sup_{t\in[0,T]}\|\X(t)\|_{\H}^2+2\mu \int_0^T\|\nabla\X(t)\|_{\H}^2\d t+2\alpha \int_0^T\|\X(t)\|_{\H}^2\d t+
		2\beta\int_0^T\|\X(t)\|_{\widetilde{\L}^{r+1}}^{r+1}\d t\right]\nonumber\\&\leq 2\left[\|\x\|_{\H}^2+\left(\frac{1}{\alpha}+\frac{1}{\mu}\right)\int_0^T\|\f(t)\|_{\V'}^2\d t+\varkappa(2|\gamma|)^{\frac{r+1}{r-q}}|\mathbb{T}^d|T+7\Tr(\Q)T\right],
	\end{align}
	where $\varkappa=\left(\frac{\beta(r+1)}{q+1}\right)^{\frac{q+1}{r-q}}\left(\frac{r-q}{r+1}\right)$. In addition, the following It\^o formula holds true: 
	\begin{align*}
		&\|\X(t)\|_{\H}^2+2\alpha\int_0^t \|\X(s)\|_{\H}^2\d s+2\mu \int_0^t\|\X(s)\|_{\V}^2\d s+ 
		2\beta \int_0^t\|\X(s)\|_{\widetilde{\L}^{r+1}}^{r+1}\d s\nonumber\\&= \|\x\|_{\H}^2-2\gamma\int_0^t\|\X(s)\|_{\widetilde{\L}^{q+1}}^{q+1}\d s+2\int_0^t \langle\f(s),\X(s)\rangle\d s+\text{\emph{Tr}}(\Q)t+2\int_0^t (\sqrt{\Q}\d\W(s),\X(s)),
	\end{align*}
	for all $t\in[0,T]$, $\mathbb{P}$-a.s.

Moreover, if $\x\in\V$, $\f\in\mathrm{L}^2(0,T;\H)$ and $\Tr(\A\Q)<\infty$, then the unique strong solution $\X$ to the system \eqref{scbf} belongs to  $\mathrm{L}^2(\Omega;\mathrm{L}^{\infty}(0,T;\V)\cap\mathrm{L}^2(0,T;\D(\A)))\cap\mathrm{L}^{r+1}(\Omega;\mathrm{L}^{r+1}(0,T;\widetilde{\L}^{3(r+1)}))$ such that 
\begin{align}\label{430}
	&\E\left[\sup_{t\in[0,T]}\|\nabla\X(t)\|_{\H}^2+2\mu \int_0^T\|\A\X(t)\|_{\H}^2\d t+4\alpha\int_0^T \|\nabla\X(t)\|_{\H}^2\d t+2\beta \int_0^T\||\X(t)|^{\frac{r-1}{2}}\nabla\X(t)\|_{\H}^2\d t\right] \nonumber\\&\leq C\left\{\|\nabla\x\|_{\H}^2+\frac{1}{\mu}\int_0^T \|\f(t)\|_{\H}^2\d t +T\Tr(\A\Q)\right\}e^{CT},
\end{align}
and $\X$ has a modification with paths in $ \C([0,T];\V)\cap\mathrm{L}^2(0,T;\D(\A))\cap\mathrm{L}^{r+1}(0,T;\widetilde{\L}^{3(r+1)})$, $\mathbb{P}$-a.s.
	\end{theorem}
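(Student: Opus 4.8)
The plan is to construct the solution via a Faedo--Galerkin approximation, to derive uniform a priori estimates by the It\^o formula, and to pass to the limit on the fixed probability space by combining weak compactness with the stochastic Minty--Browder (monotonicity) technique; the higher regularity is then obtained by testing the equation against $\A\X$. Let $\{\boldsymbol{e}_k\}_{k\in\N}$ be the eigenfunctions of $\A$, put $\H_n=\mathrm{span}\{\boldsymbol{e}_1,\ldots,\boldsymbol{e}_n\}$ and let $\mathrm{P}_n$ be the orthogonal projection of $\H$ onto $\H_n$. Projecting \eqref{scbf} onto $\H_n$ gives a finite-dimensional It\^o system whose coefficients are locally Lipschitz and, after the shift by $\kappa\I$, monotone in the sense of \eqref{mono}; this system has a unique local $\H_n$-valued solution $\X^n$, and the a priori bounds below show that its lifetime is all of $[0,T]$.

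First I would apply the It\^o formula to $\|\X^n(t)\|_{\H}^2$. The decisive cancellations are $(\mathcal{B}(\X^n),\X^n)=b(\X^n,\X^n,\X^n)=0$ and $\langle\mathcal{C}_1(\X^n),\X^n\rangle=\|\X^n\|_{\widetilde{\L}^{r+1}}^{r+1}$, so that the diffusion and the $\beta$-damping provide the coercive terms $2\mu\|\nabla\X^n\|_{\H}^2+2\alpha\|\X^n\|_{\H}^2+2\beta\|\X^n\|_{\widetilde{\L}^{r+1}}^{r+1}$. The possibly destabilising pumping term $\gamma\langle\mathcal{C}_2(\X^n),\X^n\rangle=\gamma\|\X^n\|_{\widetilde{\L}^{q+1}}^{q+1}$ is absorbed into the $\beta$-term using $q<r$ and Young's inequality, which yields the constant $\varkappa(2|\gamma|)^{\frac{r+1}{r-q}}|\mathbb{T}^d|$ of \eqref{429}. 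The It\^o correction contributes $\Tr(\Q)\,t$, and after taking the supremum in time the stochastic integral is controlled by the Burkholder--Davis--Gundy inequality followed by Young's inequality; Gronwall's lemma then produces the bound \eqref{429}, uniformly in $n$.

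These estimates furnish weak-$*$ convergent subsequences in $\mathrm{L}^2(\Omega;\mathrm{L}^{\infty}(0,T;\H))$, $\mathrm{L}^2(\Omega\times(0,T);\V)$ and $\mathrm{L}^{r+1}(\Omega\times(0,T);\widetilde{\L}^{r+1})$, together with weak limits for $\mathcal{B}(\X^n)$ and $\mathcal{C}_1(\X^n)$ in the dual space $\V'+\widetilde{\L}^{\frac{r+1}{r}}$, while the martingale term passes to the limit. Working within the Gelfand triple $\V\cap\widetilde{\L}^{r+1}\hookrightarrow\H\hookrightarrow\V'+\widetilde{\L}^{\frac{r+1}{r}}$, I would identify these weak limits with $\mathcal{B}(\X)$ and $\mathcal{C}_1(\X)$ through the stochastic Minty--Browder argument, using the quasi-$m$-accretivity \eqref{mono} of $\Gamma$, the lower semicontinuity of the norms, and the limiting energy identity. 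Pathwise uniqueness is proved directly: subtracting two solutions, applying the It\^o formula to the difference, and invoking \eqref{mono} (the strict monotonicity \eqref{C1} of $\mathcal{C}_1$ is used here) together with Gronwall's inequality. The It\^o energy equality in the statement then follows by applying the infinite-dimensional It\^o formula to the limit, which is legitimate because it belongs to the above Gelfand triple.

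For the higher regularity I would apply the It\^o formula to $\|\nabla\X^n(t)\|_{\H}^2$, that is, test against $\A\X^n$. On the torus $\mathcal{P}$ and $-\Delta$ commute, so the identity \eqref{3} shows that $\beta\langle\mathcal{C}_1(\X^n),\A\X^n\rangle$ dominates the coercive quantity $2\beta\||\X^n|^{\frac{r-1}{2}}\nabla\X^n\|_{\H}^2$ appearing in \eqref{430}; moreover the It\^o correction now produces $\Tr(\A\Q)\,t$, which is precisely why the stronger trace hypothesis $\Tr(\A\Q)<\infty$ is imposed. \textbf{The main obstacle} is the convective term $(\mathcal{B}(\X^n),\A\X^n)$: unlike for the Navier--Stokes equations, where this term is uncontrollable in three dimensions, here one bounds it by interpolation and Young's inequality so that it is dominated by small multiples of $\mu\|\A\X^n\|_{\H}^2$ and $\beta\||\X^n|^{\frac{r-1}{2}}\nabla\X^n\|_{\H}^2$. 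This absorption is exactly what forces the supercritical range $r>3$, and the critical case $r=3$ only under $2\beta\mu>1$. Once it is secured, Gronwall's inequality gives \eqref{430} uniformly in $n$, and passing to the limit as before yields the regularity $\C([0,T];\V)\cap\mathrm{L}^2(0,T;\D(\A))\cap\mathrm{L}^{r+1}(0,T;\widetilde{\L}^{3(r+1)})$, $\P$-a.s.
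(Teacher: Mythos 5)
Your proposal is correct in outline, but it takes a more self-contained route than the paper. The paper does \emph{not} construct the solution at all: it simply cites \cite{MT4} for existence, pathwise uniqueness and the It\^o energy equality, and devotes the proof entirely to the two estimates \eqref{429} and \eqref{430}, which it obtains by applying the infinite-dimensional It\^o formula \emph{directly to the strong solution} $\X$ (respectively to $\A^{1/2}\X$ after operating with $\A^{1/2}$ on the equation), splitting the gradient estimate into the two cases $r>3$ and $d=r=3$ with $2\beta\mu>1$. You instead rebuild the whole existence theory via Faedo--Galerkin, a priori bounds uniform in $n$, weak compactness and the stochastic Minty--Browder identification of the nonlinear limits -- which is essentially the content of the cited reference \cite{MT4}, so nothing in your plan conflicts with the paper; your estimate computations (cancellation of the convective term against $\X^n$, absorption of the pumping term with the constant $\varkappa(2|\gamma|)^{\frac{r+1}{r-q}}|\mathbb{T}^d|$, BDG plus Young for the supremum of the stochastic integral, the identity \eqref{3} and the absorption of $(\mathcal{B}(\X^n),\A\X^n)$ into $\mu\|\A\X^n\|_{\H}^2$ and $\beta\||\X^n|^{\frac{r-1}{2}}\nabla\X^n\|_{\H}^2$, which is where $r>3$ or $2\beta\mu>1$ enters) coincide with the paper's. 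What your approach buys is independence from the external reference and a clean justification of why the It\^o formula may be applied in the limit; what the paper's approach buys is brevity, and the ability to state \eqref{429} with explicit constants. One small caveat on that last point: for \eqref{429} you invoke Gronwall, but the bound with the stated constants (the prefactor $2$ and $7\Tr(\Q)T$, with no exponential factor) comes from direct absorption of the BDG term $\tfrac14\E\big[\sup_{t\in[0,T]}\|\X(t)\|_{\H}^2\big]+3\Tr(\Q)T$ into the left-hand side; an actual Gronwall step would introduce a spurious factor $e^{CT}$, so you should drop it there (Gronwall is genuinely needed only for \eqref{430}, where the term $(\eta_4+\eta_5)\int_0^T\|\nabla\X\|_{\H}^2\,\d t$ cannot be absorbed).
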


	\begin{proof}
	The existence and uniqueness of a strong solution to the system \eqref{scbf} is available in \cite{MT4}. Therefore, we are providing here proofs  of the energy estimates \eqref{429} and \eqref{430} only. 	On applying the infinite-dimensional It\^o formula to the function $\|\cdot\|_{\H}^2$ and to the process $\X(\cdot)$, we get
		\begin{align*}
			&\|\X(t)\|_{\H}^2+2\mu \int_0^t\|\nabla\X(s)\|_{\H}^2\d s+2\alpha \int_0^t\|\X(s)\|_{\H}^2\d s+ 2\beta\int_0^t\|\X(s)\|_{\widetilde{\L}^{r+1}}^{r+1}\d s\nonumber\\&=\|\x\|_{\H}^2+2\int_0^t \langle\f(s),\X(s)\rangle\d s-2\gamma\int_0^t\|\X(s)\|_{\widetilde{\L}^{q+1}}^{q+1}\d s \nonumber\\&\quad+\int_0^{t}\Tr(\Q)\d s+2\int_0^{t}\left(\sqrt{\Q}\d\W(s),\X(s)\right) \nonumber\\&\leq\|\x\|_{\H}^2+2\int_0^t \langle\f(s),\X(s)\rangle\d s+ 2|\gamma||\mathbb{T}^d|^{\frac{r-q}{r+1}}\int_0^t\|\X(t)\|_{\widetilde{\L}^{r+1}}^{q+1}\d s \nonumber\\&\quad+ \int_0^{t}\Tr(\Q)\d s+2\int_0^{t}\left(\sqrt{\Q}\d\W(s),\X(s)\right) \nonumber\\&\leq\|\x\|_{\H}^2+\left(\frac{1}{\alpha}+\frac{1}{\mu}\right)\int_0^t \|\f(s)\|_{\V'}^2+\alpha\int_0^t \|\X(s)\|_{\H}^2\d s+\mu\int_0^t \|\nabla\X(s)\|_{\H}^2\d s+ \varkappa(2|\gamma|)^{\frac{r+1}{r-q}}|\mathbb{T}^d|t \nonumber\\&\quad +\beta\int_0^t\|\X(t)\|_{\widetilde{\L}^{r+1}}^{r+1}\d s+ \int_0^{t}\Tr(\Q)\d s+2\int_0^{t}\left(\sqrt{\Q}\d\W(s),\X(s)\right), \ \P\text{-a.s.},
		\end{align*}
for all $t\in[0,T]$ and where $\varkappa=\left(\frac{\beta(r+1)}{q+1}\right)^{\frac{q+1}{r-q}}\left(\frac{r-q}{r+1}\right)$. 
 This implies that
 \begin{align*}
&\|\X(t)\|_{\H}^2+\mu \int_0^t\|\nabla\X(s)\|_{\H}^2\d s+\alpha \int_0^t\|\X(s)\|_{\H}^2\d s+ \beta\int_0^t\|\X(s)\|_{\widetilde{\L}^{r+1}}^{r+1}\d s\nonumber\\&\leq	\|\x\|_{\H}^2+\left(\frac{1}{\alpha}+\frac{1}{\mu}\right)\int_0^t \|\f(s)\|_{\V'}^2+ \varkappa(2|\gamma|)^{\frac{r+1}{r-q}}|\mathbb{T}^d|t +\Tr(\Q)t+ 2\int_0^{t}\left(\sqrt{\Q}\d\W(s),\X(s)\right).
 \end{align*}
On taking supremum over $t\in[0,T]$ and then expectation, we obtain
		\begin{align}\label{vreg0}
			&\E\left[\sup\limits_{t\in[0,T]}\|\X(t)\|_{\H}^2+\mu \int_0^T\|\nabla\X(s)\|_{\H}^2\d s+ \alpha \int_0^T\|\X(s)\|_{\H}^2\d s+\beta\int_0^T\|\X(s)\|_{\widetilde{\L}^{r+1}}^{r+1}\d s\right] \nonumber\\&\leq\E\left[\|\x\|_{\H}^2\right]+\left(\frac{1}{\alpha}+\frac{1}{\mu}\right)\int_0^T \|\f(s)\|_{\V'}^2\d s+ \varkappa(2|\gamma|)^{\frac{r+1}{r-q}}|\mathbb{T}^d|T +\Tr(\Q)T\nonumber\\&\quad+2\E\left[\sup\limits_{t\in[0,T]} \left|\int_0^{t}\left(\sqrt{\Q}\d\W(s),\X(s)\right)\right|\right].
		\end{align} 
We estimate the last term in \eqref{vreg0} by using the Burkholder-Davis-Gundy inequality (cf. \cite{bd,IA}) and Young's inequality as
\begin{align}\label{vreg01}
\E\left[\sup\limits_{t\in[0,T]}\left|\int_0^{t}\left(\sqrt{\Q}\d\W(s),\X(s)\right)\right|\right]
&\leq\sqrt{3}\E\left[\int_0^{T}\|\sqrt{\Q}\|_{\mathrm{L}_2(\U,\H)}^2\|\X(t)\|_{\H}^2\d t \right]^{\frac{1}{2}}\nonumber\\&=\sqrt{3}\E\left[\int_0^{T}\Tr(\Q)\|\X(s)\|_{\H}^2\d t \right]^{\frac{1}{2}}\nonumber\\&\leq\sqrt{3}\E\left[\left(\sup\limits_{t\in[0,T]}\|\X(t)\|_{\H}^2\right)^{\frac{1}{2}}\left(\int_0^{T}\Tr(\Q)\d t\right)^{\frac{1}{2}}\right]\nonumber\\&\leq\frac{1}{4}\E\left[\sup\limits_{t\in[0,T]}\|\X(t)\|_{\H}^2\right]+3\Tr(\Q)T.
\end{align}
On substituting \eqref{vreg01} in \eqref{vreg0}, we obtain
\begin{align*}
	&\E\left[\frac{1}{2}\sup\limits_{t\in[0,T]}\|\X(t)\|_{\H}^2+\mu \int_0^T\|\nabla\X(s)\|_{\H}^2\d s+ \alpha \int_0^T\|\X(s)\|_{\H}^2\d s+\beta\int_0^T\|\X(t)\|_{\widetilde{\L}^{r+1}}^{r+1}\d s\right] \nonumber\\&\leq\|\x\|_{\H}^2+\left(\frac{1}{\alpha}+\frac{1}{\mu}\right)\int_0^T \|\f(s)\|_{\V'}^2\d s+ \varkappa(2|\gamma|)^{\frac{r+1}{r-q}}|\mathbb{T}^d|T +7\Tr(\Q)T,
\end{align*}
and \eqref{429} follows.

Let us operate by $\A^{\frac{1}{2}}$ in \eqref{scbf} to obtain the following stochastic differential satisfied by the stochastic process $\A^{\frac{1}{2}}\X$:
	\begin{equation*}
		\left\{
		\begin{aligned}
			&	\d\A^{\frac{1}{2}}\X(t)+\A^{\frac{1}{2}}\{\mu\A\X(t)+\mathcal{B}(\X(t))+\alpha\X(t)+\beta\mathcal{C}_1(\X(t))+\gamma\mathcal{C}_2(\X(t))\}\d t\\&\hspace{1.1cm}=\A^{\frac{1}{2}}\f\d t+ \A^{\frac{1}{2}}\sqrt{\mathrm{Q}}\d\W(t),
		\end{aligned}
		\right.
	\end{equation*}
	for a.e. $t\in[0,T]$. On applying the infinite-dimensional It\^o formula to the function $\|\cdot\|_{\H}^2$ and to the process  $\A^{\frac{1}{2}}\X$, we get
	\begin{align}\label{vreg1}
		&	\|\nabla\X(t)\|_{\H}^2+2\mu \int_0^t\|\A\X(s)\|_{\H}^2\d s+2\alpha \int_0^t\|\nabla\X(s)\|_{\H}^2\d s+2\beta\int_0^t(\mathcal{C}_1(\X(s)),\A\X(s))\d s\nonumber\\&= \|\nabla\x\|_{\H}^2+2\int_0^t (\f(s),\A\X(s))\d s-2\int_0^t(\B(\X(s)),\A\X(s))\d s- 2\gamma\int_0^t(\mathcal{C}_2(\X(s)),\A\X(s))\d s \nonumber\\&\quad+\int_0^{t}\Tr(\A\Q)\d s + 2\int_0^{t}\left(\A^{1/2}\sqrt{\Q}\d\W(s),\A^{1/2}\X(s)\right), \  \P\text{-a.s.,}
	\end{align}
for all $t\in[0,T]$. We consider the following two cases:
		\vskip 2mm
	\noindent
	\textbf{Case-I} \emph{(For $r>3$).} By using H\"older's and Young's inequalities, we estimate $|(\B(\X),\A\X)|$ as 
	\begin{align}\label{vreg1.0}
		|(\B(\X),\A\X)|\leq\frac{\mu}{4}\|\A\X\|_{\H}^2+\frac{\beta}{4} \||\X|^{\frac{r-1}{2}}\nabla\X\|_{\H}^2+\eta_4\|\nabla\X\|_{\H}^2,
	\end{align}
	where $\eta_4:=\frac{r-3}{2\mu(r-1)}\left[\frac{8}{\beta\mu(r-1)}\right]^{\frac{2}{r-3}}.$ Using  H\"older's inequality, we estimate
	\begin{align}\label{vreg2}
		\int_{\mathbb{T}^d}|\X(x)|^{q-1}|\nabla\X(x)|^2\d x\leq
		\left(\int_{\mathbb{T}^d}|\X(x)|^{r-1}|\nabla\X(x)|^2\d x\right)^\frac{q-1}{r-1}
		\left(\int_{\mathbb{T}^d}|\nabla\X(x)|^2 \d x\right)^\frac{r-q}{r-1}.
	\end{align}
	Using \eqref{vreg2}, the identity $\nabla|\y|^k=k \sum\limits_{j=1}^dy_j\nabla y_j|\y|^{k-2}$, and H\"older's and Young's inequalities, we calculate 
	\begin{align}\label{vreg3}
		|\gamma(\mathcal{C}_2(\X),\mathrm{A}\X)|&\leq
		|\gamma|\||\X|^{\frac{q-1}{2}}\nabla\X\|_{\H}^{2} +4|\gamma|\left[\frac{q-1}{(q+1)^2}\right]\|\nabla|\X|^{\frac{q+1}{2}}\|_{\H}^{2}
		\nonumber\\&\leq q|\gamma|\||\X|^{\frac{r-1}{2}}\nabla(\X)\|_{\H}^{\frac{2(q-1)}{r-1}}\|\nabla\X\|_{\H}^{\frac{2(r-q)}{r-1}}\nonumber\\&\leq %C(q)|\gamma|\||\X|^{\frac{r-1}{2}}\nabla\X\|_{\H}^{\frac{2(q-1)}{r-1}}\|\X\|_{\V}^{\frac{2(r-q)}{r-1}}\nonumber\\&\leq
		\frac{\beta}{4}\||\X|^{\frac{r-1}{2}}\nabla(\X)\|_{\H}^2+\eta_5\|\nabla\X\|_{\H}^2,
	\end{align}
	where $\eta_5:=(q|\gamma|)^{\frac{r-1}{r-q}}\left[\frac{4}{\beta}\left(\frac{q-1}{r-1}\right)\right]^ {\frac{q-1}{r-q}}\left({\frac{r-q}{r-1}}\right).$ Combining \eqref{vreg1}  with \eqref{vreg1.0}-\eqref{vreg3} yield
	\begin{align*}
		&	\|\nabla\X(t)\|_{\H}^2+\mu\int_0^t\|\A\X(s)\|_{\H}^2\d s+2\alpha \int_0^t\|\nabla\X(s)\|_{\H}^2\d s+\beta\int_0^t\||\X(s)|^{\frac{r-1}{2}}\nabla\X(s)\|_{\H}^2\d s \nonumber\\&\leq \|\nabla\x\|_{\H}^2+\frac{1}{\mu}\int_0^t \|\f(s)\|_{\H}^2\d s+ (\eta_4+\eta_5)\int_0^t\|\nabla\X(s)\|_{\H}^2\d s +\int_0^{t}\Tr(\A\Q)\d s \nonumber\\&\quad+ 2\int_0^{t}\left(\A^{1/2}\sqrt{\Q}\d\W(s),\A^{1/2}\X(s)\right), \  \P\text{-a.s.},
	\end{align*}
for all $t\in[0,T]$. On taking supremum over time $t\in[0,T]$ followed by an expectation, we deduce
\begin{align}\label{vreg4}
	&	\E\left[\sup\limits_{t\in[0,T]}\|\nabla\X(t)\|_{\H}^2+\mu \int_0^T\|\A\X(s)\|_{\H}^2\d s+2\alpha \int_0^T\|\nabla\X(s)\|_{\H}^2\d s+\beta\int_0^T\||\X(s)|^{\frac{r-1}{2}}\nabla\X(s)\|_{\H}^2\d s\right]\nonumber\\&\leq\|\nabla\x\|_{\H}^2+\frac{1}{\mu}\int_0^t \|\f(s)\|_{\H}^2\d s+(\eta_4+\eta_5) \E\left[\int_0^T\|\nabla\X(s)\|_{\H}^2\d s \right]+\E\left[\int_0^{T}\Tr(\A\Q)\d s \right] \nonumber\\&\quad+2\E\left[\sup\limits_{t\in[0,T]}\left|\int_0^{t}\left(\sqrt{\A\Q}\d\W(s),\A^{1/2}\X(s)\right)\right|\right].
\end{align}
We calculate the last term in \eqref{vreg4} by making use of the Burkholder-Davis-Gundy inequality (cf. \cite{bd,IA}) and Young's inequality as
\begin{align}\label{vreg5} \E\left[\sup\limits_{t\in[0,T]}\left|\int_0^{t}\left(\sqrt{\A\Q}\d\W(s),\A^{1/2}\X(s)\right)\right|\right]
&\leq\sqrt{3}\E\left[\int_0^{T}\|\sqrt{\A\Q}\|_{\mathrm{L}_2(\U,\H)}^2\|\A^{1/2}\X(t)\|_{\H}^2\d t \right]^{\frac{1}{2}}\nonumber\\&=\sqrt{3}\E\left[\int_0^{T}\Tr(\A\Q)\|\nabla\X(t)\|_{\H}^2\d t \right]^{\frac{1}{2}}\nonumber\\&\leq\sqrt{3}\E\left[\left(\sup\limits_{t\in[0,T]}\|\nabla\X(t)\|_{\H}^2\right)^{\frac{1}{2}}\left(\int_0^{T}\Tr(\A\Q)\d t\right)^{\frac{1}{2}}\right] \nonumber\\&\leq\frac{1}{4}\E\left[\sup\limits_{t\in[0,T]}\|\nabla\X(t)\|_{\H}^2\right]+3\Tr(\A\Q)T.
\end{align}
On substituting \eqref{vreg5} in \eqref{vreg4}, we obatin
\begin{align}\label{vreg6}
	&\E\bigg[\frac{1}{2}\sup\limits_{t\in[0,T]}\|\nabla\X(t)\|_{\H}^2+\mu \int_0^T\|\A\X(s)\|_{\H}^2\d s +2\alpha \int_0^T\|\nabla\X(s)\|_{\H}^2\d s +\beta\int_0^T\||\X(s)|^{\frac{r-1}{2}}\nabla\X(s)\|_{\H}^2\d s \bigg]%\nonumber\\&\leq\E\left[\|\nabla\x\|_{\H}^2\right]+\frac{1}{\mu}\int_0^T \|\f(s)\|_{\H}^2\d s+(\eta_6+\eta_7) \E\left[\int_0^T\|\nabla\X(s)\|_{\H}^2\d s \right]+7\E\left[\int_0^{T}\Tr(\A\Q)\d s \right]  
	\nonumber\\&\leq\|\nabla\x\|_{\H}^2+\frac{1}{\mu}\int_0^T \|\f(s)\|_{\H}^2\d s +(\eta_4+\eta_5) \E\left[\int_0^T\|\nabla\X(s)\|_{\H}^2\d s \right]+7T\Tr(\A\Q).
\end{align}
	By the application of Gronwall inequality, we arrive at 
%	\begin{align}\label{vreg7}
%		 \E\left[\sup\limits_{t\in[0,T]}\|\nabla\X(t)\|_{\H}^2\right]\leq\left\{\E\left[2\|\nabla\x\|_{\H}^2\right]+\frac{2}{\mu}\int_0^T \|\f(s)\|_{\H}^2\d s +14T\Tr(\A\Q)\right\}e^{2(\eta_4+\eta_5)T}.
%	\end{align}
%From \eqref{vreg6} and \eqref{vreg7}, it implies that
\begin{align}\label{vreg8}
		&\E\left[\sup\limits_{t\in[0,T]}\|\nabla\X(t)\|_{\H}^2+2\mu \int_0^T\|\A\X(s)\|_{\H}^2\d s+ +4\alpha \int_0^T\|\nabla\X(s)\|_{\H}^2\d s+ 2\beta\int_0^T\||\X(s)|^{\frac{r-1}{2}}\nabla\X(s)\|_{\H}^2\d s \right]\nonumber\\&\leq 2\left\{\|\nabla\x\|_{\H}^2+\frac{1}{\mu}\int_0^T \|\f(s)\|_{\H}^2\d s +7T\Tr(\A\Q)\right\}e^{2(\eta_4+\eta_5)T}.
\end{align}

\vskip 2mm
\noindent
\textbf{Case-II} \emph{(For $d=r=3$ with $2\beta\mu>1$).} By using H\"older's and Young's inequalities, we calculate following:
\begin{align}
	|(\B(\X),\A\X)|&\leq\||\X|\nabla\X\|_{\H}\|\A\X\|_{\H}\leq\theta\mu\|\A\X\|_{\H}^2+\frac{1}{4\theta\mu}\||\X|\nabla\X\|_{\H}^2,\label{vreg9}\\
	|\gamma(\mathcal{C}_2(\X),\mathrm{A}\X)|&\leq\frac{1}{4\theta\mu} \||\X|\nabla\X\|_{\H}^2+\eta_6\|\nabla\X\|_{\H}^2,\label{vreg10.1}\\
|(\f,\A\X)|&\leq\frac{1}{2\mu(1-\theta)}\|\f\|_{\H}^2+\frac{\mu(1-\theta)}{2}\|\A\X\|_{\H}^2,\label{vreg10}
\end{align}
for some $0<\theta<1$ and where $\eta_6=\left(2\theta\mu|\gamma|q(q-1)\right)^{\frac{3-q}{q-1}}\left({\frac{3-q}{2}}\right)$. Moreover, for $r=3$, we write
\begin{align}\label{vreg11}
	(\mathcal{C}_1(\X),\mathrm{A}\X)&=\||\X|\nabla\X\|_{\H}^{2} +\frac{1}{2}\|\nabla|\X|^2\|_{\H}^{2}.
\end{align}
 Using \eqref{vreg9}-\eqref{vreg11} in \eqref{vreg1} and performing the similar calculations as we did for $r>3$, we finally obtain 
\begin{align}\label{vreg12}
	&\E\bigg[\sup\limits_{t\in[0,T]}\|\nabla\X(t)\|_{\H}^2+\mu(1-\theta)\int_0^T\|\A\X(s)\|_{\H}^2\d s+2\left(\beta-\frac{1}{2\theta\mu}\right) \int_0^T\||\X(s)|\nabla\X(s)\|_{\H}^{2}\d s\bigg]\nonumber\\&\leq 2\left\{\|\nabla\x\|_{\H}^2+\frac{1}{2\mu(1-\theta)}\int_0^T\|\f(s)\|_{\H}^2 \d s+7T\Tr(\A\Q)\right\}e^{2\eta_6T},
\end{align}
which completes the proof.
\end{proof}

%Let us take $\mathbb{U}=\D((\I+\A)^{-\left(\frac{\e-1}{2}\right)})$. 
% Further, note that the embedding $\H\hookrightarrow\U$ is Hilbert-Schmidt, that is, the map $\mathrm{J}:\U\to\H$ is a Hilbert-Schmidt operator, since
%\begin{align*}
%	\|\mathrm{J}\|_{\mathrm{L}_2(\H,\U)}^2=\sum\limits_{k=1}^{\infty} \|\mathrm{J}\boldsymbol{e}_k\|_{\U}^2=\sum\limits_{k=1}^{\infty} \|\boldsymbol{e}_k\|_{\U}^2=\sum\limits_{k=1}^{\infty} ((\I+\A)^{-\e+1} \boldsymbol{e}_k,\boldsymbol{e}_k)\leq\sum\limits_{k=1}^{\infty}\frac{1}{k^{\frac{2}{d}(\e-1)}} <\infty,
%\end{align*}
%for $\e>\frac{d}{2}+1$. 

We need the following results to establish our main theorem. The next result shows  $\mathbb{P}$-a.s. bound for the solutions of the system \eqref{scbf}. 
	\begin{proposition}\label{engest}
		For arbitarary $\x\in\H$, the unique solution $\X(\cdot,\cdot)$ to the  stochastic system \eqref{scbf}  satisfies 
		$	\X(\cdot,\omega)\in\mathrm{C}([0,T];\H)\cap\mathrm{L}^2(0,T;\V')\cap\mathrm{L}^{r+1}(0,T;\wi\L^{r+1}),$ $\P$-a.s.,  and 
		\begin{align}\label{est}
			&\sup\limits_{t\in[0,T]}\|\X(t)\|_{\H}^2+\int_0^T\|\nabla\X(t)\|_{\H}^2\d t+ \int_0^T\|\X(t)\|_{\H}^2\d t+ \int_0^T \|\X(t)\|_{\wi\L^{r+1}}^{r+1}\d t\nonumber\\&\leq C\left(\mu,\beta,\gamma,r,T,\Q,|\mathbb{T}^d|,  \|\f\|_{\mathrm{L}^2(0,T;\V')}\right), \ \P\text{-a.s. }
		\end{align}
	\end{proposition}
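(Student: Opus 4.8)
The plan is to remove the stochastic forcing via an Ornstein--Uhlenbeck shift, reducing \eqref{scbf} to a pathwise deterministic evolution equation, and then to run the standard CBFeD energy estimate on the shifted process. Let $\Z$ be the Ornstein--Uhlenbeck process governed by \eqref{oup}, i.e.\ schematically $\d\Z+(\mu\A+\alpha\I)\Z\,\d t=\sqrt{\Q}\,\d\W$ driven by the same Wiener process as in \eqref{scbf}. Under the trace-class hypothesis $\Tr(\A\Q)<\infty$ verified in Example \ref{exmp4.1}, $\Z$ has, $\P$-a.s., paths in $\C([0,T];\V)\cap\mathrm{L}^2(0,T;\D(\A))\cap\mathrm{L}^{r+1}(0,T;\wi\L^{r+1})$, and every norm of $\Z$ used below is an a.s.\ finite random variable whose size is controlled by $\Q$. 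Setting $\Y:=\X-\Z$, the difference solves, for $\P$-a.e.\ $\omega$, the random system \eqref{scbf1},
\[
\frac{\d\Y}{\d t}+\mu\A\Y+\alpha\Y+\mathcal{B}(\Y+\Z)+\beta\mathcal{C}_1(\Y+\Z)+\gamma\mathcal{C}_2(\Y+\Z)=\f,\qquad\Y(0)=\x,
\]
which contains no It\^o term and can therefore be estimated trajectory by trajectory.

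First I would record the path regularity $\X(\cdot,\omega)\in\C([0,T];\H)\cap\mathrm{L}^2(0,T;\V)\cap\mathrm{L}^{r+1}(0,T;\wi\L^{r+1})$, which is immediate: $\X$ enjoys this regularity $\P$-a.s.\ by Definition \ref{def4.2}\,(i) and Theorem \ref{regularity}, and it is equally recovered from $\X=\Y+\Z$ together with the deterministic solvability of \eqref{scbf1} and the a.s.\ regularity of $\Z$ just noted.

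Next I would take the $\H$-inner product of \eqref{scbf1} with $\Y$. Using $b(\cdot,\Y,\Y)=0$ gives $\langle\mathcal{B}(\Y+\Z),\Y\rangle=b(\Y+\Z,\Z,\Y)$, while writing $\X=\Y+\Z$ turns the damping pairing into $\beta\langle\mathcal{C}_1(\X),\Y\rangle=\beta\|\X\|_{\wi\L^{r+1}}^{r+1}-\beta\langle|\X|^{r-1}\X,\Z\rangle$. This produces a differential inequality for $\tfrac12\tfrac{\d}{\d t}\|\Y\|_{\H}^2+\mu\|\nabla\Y\|_{\H}^2+\alpha\|\Y\|_{\H}^2+\beta\|\X\|_{\wi\L^{r+1}}^{r+1}$ whose right-hand side collects the convective cross terms $b(\Y+\Z,\Z,\Y)$, the Forchheimer cross term $\beta\langle|\X|^{r-1}\X,\Z\rangle$, the pumping contribution $\gamma\langle\mathcal{C}_2(\X),\Y\rangle$, and the forcing $\langle\f,\Y\rangle$. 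Each of these I would bound by H\"older's inequality, the interpolation estimates \eqref{2p9} and \eqref{bilin}, and Young's inequality, absorbing the top-order pieces into $\mu\|\nabla\Y\|_{\H}^2$, $\alpha\|\Y\|_{\H}^2$ and $\beta\|\X\|_{\wi\L^{r+1}}^{r+1}$; the subcritical pumping term ($q<r$) is dominated by the damping exactly as in \eqref{vreg2}--\eqref{vreg3}. What remains is a locally integrable, a.s.\ finite function of the $\Z$-norms, so Gronwall's inequality closes the bound for $\Y$, and $\X=\Y+\Z$ then yields \eqref{est} with a random but $\P$-a.s.\ finite constant whose magnitude depends on $\mu,\beta,\gamma,r,T,|\mathbb{T}^d|,\|\f\|_{\mathrm{L}^2(0,T;\V')}$ and, through $\Z$, on $\Q$.

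The main obstacle is the pathwise control of the convective nonlinearity in the critical and supercritical regimes: one must use the Forchheimer damping $\beta\mathcal{C}_1$ to dominate both $\mathcal{B}(\Y+\Z)$ and the pumping term, which succeeds precisely because $r>3$ (or $r=3$ with $2\beta\mu>1$), the interpolation structure behind \eqref{bilin} being what makes the Young absorption work. A secondary subtlety, absent from the moment estimates of Theorem \ref{regularity}, is that one must avoid the Burkholder--Davis--Gundy route entirely and instead keep all absorbing constants as explicit functions of $\Z$, verifying via the Ornstein--Uhlenbeck regularity from Example \ref{exmp4.1} that the resulting right-hand side is a.s.\ finite and integrable in time.
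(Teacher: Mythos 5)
Your proposal follows the same overall strategy as the paper: shift by the Ornstein--Uhlenbeck process solving \eqref{oup}, observe that the difference solves the random PDE \eqref{scbf1} pathwise (no It\^o term), take the $\H$-inner product with the shifted variable, use $b(\cdot,\v,\v)=0$ so that the full damping norm $\beta\|\X\|_{\wi\L^{r+1}}^{r+1}$ survives on the left while every cross term carries an OU factor, and close by H\"older/Young absorption. (One cosmetic difference: after absorption the right-hand side of \eqref{cal4} depends only on $\f$ and on norms of the OU process, so the paper just integrates in time; your final Gronwall step is unnecessary.)

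There is, however, one genuine gap: your control of the Ornstein--Uhlenbeck process. You assert that under $\Tr(\A\Q)<\infty$ its paths lie a.s.\ in $\C([0,T];\V)\cap\mathrm{L}^2(0,T;\D(\A))\cap\mathrm{L}^{r+1}(0,T;\wi\L^{r+1})$. The first two memberships are standard, but the third is precisely what your absorption step needs --- the residuals left by Young's inequality are of order $\|\cdot\|_{\wi\L^{r+1}}^{r+1}$ and $\|\cdot\|_{\wi\L^{2(r+1)/(r-1)}}^{2(r+1)/(r-1)}$ in the OU variable --- and it does not follow from the first two when $d=3$ and $r$ is large. Indeed, interpolating $\C([0,T];\V)\subset\C([0,T];\mathbb{L}^6)$ against $\mathrm{L}^2(0,T;\D(\A))\subset\mathrm{L}^2(0,T;\mathbb{L}^\infty)$ gives
\begin{align*}
\int_0^T\|\cdot\|_{\wi\L^{r+1}}^{r+1}\d t\leq \sup_{t\in[0,T]}\|\cdot\|_{\mathbb{L}^6}^{6}\int_0^T\|\cdot\|_{\mathbb{L}^\infty}^{r-5}\d t,
\end{align*}
which is finite only for $r\leq 7$; since the proposition covers all $r\geq 3$ (the supercritical range being the paper's main interest), your argument is incomplete as stated. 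The paper avoids this entirely with a different regularity statement: by \cite[Theorem 5.22]{gdp}, checked via $\sum_{k}\mu_k(\mu\lambda_k+\alpha)^{\gamma-1}\leq\alpha^{\gamma-1}\Tr(\Q)<\infty$, the OU process has a $\C(\mathbb{T}^d)$-valued version, whence $\sup_{t\in[0,T]}\sup_{x\in\mathbb{T}^d}|\Y(t,x)|<\infty$ $\P$-a.s.\ (estimate \eqref{bddest}). This uniform space-time bound dominates every $\wi\L^p$ norm simultaneously, requires only $\Tr(\Q)<\infty$ rather than $\Tr(\A\Q)<\infty$, and makes the cross-term absorptions work for every $r\geq 3$ at once. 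Replacing your Sobolev-regularity claim by this space-time continuity (or otherwise proving an a.s.\ $\mathrm{L}^{r+1}(0,T;\wi\L^{r+1})$ bound for the stochastic convolution valid for all $r$) repairs the proof; the rest of your estimates then go through and yield \eqref{est}.
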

	\begin{proof}
		Let $\Y(\cdot)$ be the Ornstein-Uhlenback process satisfying
		\begin{equation}\label{oup}
			\left\{
			\begin{aligned}
				\d\Y+(\mu\A\Y(t)+\alpha\Y(t))\d t&=\sqrt{\Q}\d\W(t),\\
				\Y(0)&=\boldsymbol{0},
			\end{aligned}
			\right.
		\end{equation}
		for a.e. $t\in[0,T]$. The unique solution of \eqref{oup} can be represented as (see \cite[Chapter 5] {gdp})
		\begin{align}
			\Y(t)=\int_0^t\mathrm{S}(t-s)\sqrt{\Q}\d\W(s), 
		\end{align}
		where $\mathrm{S}$ is the analytic semigroup generated by the operator $\mu\A+\alpha\I$ (cf. \cite[Chapter 7]{ivb}) and we denote it as $\mathrm{S}(t):=e^{-(\mu\A+\alpha\I)t}$, for all $t\geq 0$.  Since $\mathrm{Tr}(\Q)=\sum\limits_{k=1}^{\infty}\mu_k<+\infty$ and $\sum\limits_{k=1}^{\infty}\frac{\mu_k}{(\mu\lambda_k+\alpha)^{1-\gamma}}\leq \frac{1}{\alpha^{1-\gamma}}\sum\limits_{k=1}^{\infty}\mu_k<\infty$, for any $\gamma\in(0,1)$, by \cite[Theorem 5.22]{gdp}, the process $\Y$ has a $\C(\mathbb{T}^d)$-valued version with continuous paths. Moreover, we have 
		\begin{align}\label{bddest}
			\sup\limits_{t\in[0,T]}\sup\limits_{x\in\mathbb{T}^d}|\Y(t,x)|\leq C(T,\Q)<\infty, \ \mathbb{P}\text{-a.s.} 
		\end{align}
		
		Let us set $\mathcal{Z}:=\X-\Y$, where $\X$ satisfies the stochastic CBFeD \eqref{scbf} and $\Y$ is the solution of the Ornstein-Uhlenback equation \eqref{oup}. On subtracting \eqref{oup} from \eqref{scbf}, we obtain the following system:
		\begin{equation}\label{scbf1}
			\left\{
			\begin{aligned}
				\frac{	\d\mathcal{Z}(t)}{\d t}+\mu\A\mathcal{Z}(t)+\mathcal{B}(\mathcal{Z}(t)+\Y(t))+\alpha\mathcal{Z}(t)+\beta\mathcal{C}_1
				(\mathcal{Z}(t)+\Y(t))+\gamma\mathcal{C}_2(\mathcal{Z}(t)+\Y(t))&=\f,\\  
				\mathcal{Z}(0)&=\x,
			\end{aligned}
			\right.
		\end{equation}
		which is a  deterministic system for each fixed $\omega\in\Omega$. Taking the inner product with $\mathcal{Z}(\cdot)$ in \eqref{scbf1}, we get
		\begin{align}\label{inn1}
			&\frac{1}{2}\frac{\d}{\d t}\|\mathcal{Z}(t)\|_{\H}^2+\mu\|\nabla\mathcal{Z}(t)\|_{\H}^2+ \alpha\|\mathcal{Z}(t)\|_{\H}^2+\beta\|\mathcal{Z}(t)+\Y(t)\|_{\wi\L^{r+1}}^{r+1}\nonumber\\& =\langle\f(t),\mathcal{Z}(t)\rangle+\langle\mathcal{B}(\mathcal{Z}(t)+\Y(t)),\Y(t)\rangle+\beta\langle\mathcal{C}_1(\mathcal{Z}(t)+\Y(t)),\Y(t)\rangle\nonumber\\&\quad+\gamma\langle\mathcal{C}_2(\mathcal{Z}(t)+\Y(t)),\Y(t)\rangle,
		\end{align}
		for a.e. $t\in[0,T]$.  For $d=2,3$ and $r\in[3,\infty)$, let us estimate $|\langle\mathcal{B}(\mathcal{Z}+\Y),\Y\rangle|$  by using H\"older's and Young's inequalities as
		%\begin{align}\label{cal1}
		%&|\langle\mathcal{B}(\mathcal{Z}(t)+\Y(t)),\Y(t)\rangle|\nonumber\\&=|\langle\mathcal{B}(\mathcal{Z}(t)+\Y(t),\mathcal{Z}(t)+\Y(t)),\Y(t)\rangle|
		%\nonumber\\&\leq|\langle\mathcal{B}(\mathcal{Z}(t),\mathcal{Z}(t)),\Y(t)\rangle|+\langle\mathcal{B}(\Y(t),\mathcal{Z}(t)),\Y(t)
		%\rangle|\nonumber\\&\leq\|\nabla\mathcal{Z}(t)\|_{\H}\|\mathcal{Z}(t)\Y(t)\|_{\H}+\|\nabla\mathcal{Z}(t)\|_{\H}\|\Y(t)\|_{\wi\L^4}^2
		%\nonumber\\&\leq\frac{\mu}{2}\|\nabla\mathcal{Z}(t)\|_{\H}^2+\frac{1}{\mu}\|\mathcal{Z}(t)\Y(t)\|_{\H}^2
		%+\frac{1}{\mu}\|\Y(t)\|_{\wi\L^4}^4\nonumber\\&\leq\frac{\mu}{2}\|\nabla\mathcal{Z}(t)\|_{\H}^2+\frac{1}{\mu}\|\Y(t)\|_{\wi\L^4}^4+\frac{1}{\mu}\|\mathcal{Z}(t)\|_{\wi\L^{r+1}}^2\|\Y(t)\|_{\wi\L^{\frac{2(r+1)}{r-1}}}^2\nonumber\\&\leq\frac{\mu}{2}\|\nabla\mathcal{Z}(t)\|_{\H}^2+\frac{1}{\mu}\|\Y(t)\|_{\wi\L^4}^4+\frac{\beta}{4}\|\mathcal{Z}(t)\|_{\wi\L^{r+1}}^{r+1}+\frac{1}{\beta\mu^{\frac{r+1}{r-1}}}\|\Y(t)\|_{\wi\L^{\frac{2(r+1)}{r-1}}}^{\frac{2(r+1)}{r-1}}.
		%\end{align}
		\begin{align}\label{cal1}
			|\langle\mathcal{B}(\mathcal{Z}+\Y),\Y\rangle|&=|\langle\mathcal{B}(\mathcal{Z}+\Y,\mathcal{Z}+\Y),\Y\rangle|\nonumber\\&\leq |\langle\mathcal{B}(\mathcal{Z}+\Y,\mathcal{Z}),\Y\rangle|+|\langle\mathcal{B}(\mathcal{Z}+\Y,\Y),\Y\rangle|\nonumber\\&\leq\|\nabla\mathcal{Z}\|_{\H}\|(\mathcal{Z}+\Y)\Y\|_{\H}\nonumber\\&\leq\frac{\mu}{4}\|\nabla\mathcal{Z}\|_{\H}^2+\frac{1}{\mu}\|(\mathcal{Z}+\Y)\Y\|_{\H}^2
			\nonumber\\&\leq\frac{\mu}{4}\|\nabla\mathcal{Z}\|_{\H}^2+\frac{1}{\mu}\|\mathcal{Z}+\Y\|_{\wi\L^{r+1}}^2\|\Y\|_{\wi\L^{\frac{2(r+1)}{r-1}}}^2\nonumber\\&\leq\frac{\mu}{4}\|\nabla\mathcal{Z}\|_{\H}^2+\frac{\beta}{4}\|\mathcal{Z}+\Y\|_{\wi\L^{r+1}}^{r+1}+\frac{1}{\beta\mu^{\frac{r+1}{r-1}}}\|\Y\|_{\wi\L^{\frac{2(r+1)}{r-1}}}^{\frac{2(r+1)}{r-1}}\nonumber\\&\leq\frac{\mu}{4}\|\nabla\mathcal{Z}\|_{\H}^2+\frac{\beta}{4}\|\mathcal{Z}+\Y\|_{\wi\L^{r+1}}^{r+1}+\frac{1}{\beta\mu^{\frac{r+1}{r-1}}}|\mathbb{T}^d|^{\frac{r-3}{r-1}}\|\Y\|_{\wi\L^{r+1}}^{\frac{2(r+1)}{r-1}},
		\end{align}
		where in the last inequality, we used $\frac{2(r+1)}{r-1}\leq r+1$ for $r\geq3$. Similarly, we calculate
		\begin{align}
			|\langle\mathcal{C}_1(\mathcal{Z}+\Y),\Y\rangle|&\leq\|\mathcal{Z}+\Y\|_{\wi\L^{r+1}}^r\|\Y\|_{\wi\L^{r+1}}\leq\frac{1}{8}\|\mathcal{Z}+\Y\|_{\wi\L^{r+1}}^{r+1}+\frac{1}{(r+1)\left(\frac{r+1}{8r}\right)^r}\|\Y\|_{\wi\L^{r+1}}^{r+1},\label{cal2}\\
			|\langle\mathcal{C}_2(\mathcal{Z}+\Y),\Y\rangle|&\leq\|\mathcal{Z}+\Y\|_{\wi\L^{q+1}}^q\|\Y\|_{\wi\L^{q+1}}\leq|\mathbb{T}^d|^{\frac{q(r-q)}{(r+1)(q+1)}}\|\mathcal{Z}+\Y\|_{\wi\L^{r+1}}^{q}\|\Y\|_{\wi\L^{q+1}} \nonumber\\&\leq\frac{\beta}{8\gamma}\|\mathcal{Z}+\Y\|_{\wi\L^{r+1}}^{r+1} +\frac{(r+1-q)|\mathbb{T}^d|^{\frac{q(r-q)}{(q+1)(r+1-q)}}}{(r+1)\left(\frac{\beta(r+1)}{8\gamma q}\right)^{\frac{q}{r+1-q}}} \|\Y\|_{\wi\L^{q+1}}^{\frac{r+1}{r+1-q}},\label{cal2.0}\\
			|\langle\f,\mathcal{Z}\rangle|&\leq\|\f\|_{\V'}\|\mathcal{Z}\|_{\V}\leq\left(\frac{1}{2\alpha}+\frac{1}{\mu}\right)\|\f\|_{\V'}^2+\frac{\alpha}{2}\|\Z\|_{\H}^2+\frac{\mu}{4}\|\nabla\Z\|_{\H}^2.\label{cal3}
		\end{align}
		%\begin{align}\label{cal3}
		%	|\langle\f,\mathcal{Z}\rangle|\leq\frac{1}{2\alpha}\|\f\|_{\H}^2+\frac{\alpha}{2}\|\mathcal{Z}\|_{\H}^2.
		%\end{align}
		Combining \eqref{cal1}-\eqref{cal3} in \eqref{inn1}, we obtain
			\begin{align}\label{cal4}
				&\frac{1}{2}\frac{\d}{\d t}\|\mathcal{Z}(t)\|_{\H}^2+\frac{\mu}{2}\|\nabla\mathcal{Z}(t)\|_{\H}^2+\frac{\alpha}{2}\|\mathcal{Z}(t)\|_{\H}^2+ \frac{\beta}{2}\|\mathcal{Z}(t)+\Y(t)\|_{\wi\L^{r+1}}^{r+1}\nonumber\\&\leq\left(\frac{1}{2\alpha}+\frac{1}{\mu}\right)\|\f\|_{\V'}^2+\frac{1}{\beta\mu^{\frac{r+1}{r-1}}}|\mathbb{T}^d|^{\frac{r-3}{r-1}}\|\Y(t)\|_{\wi\L^{r+1}}^{\frac{2(r+1)}{r-1}}+\frac{\beta}{(r+1)\left(\frac{r+1}{8r}\right)^r}\|\Y(t)\|_{\wi\L^{r+1}}^{r+1}\nonumber\\&\quad+\frac{\gamma(r+1-q)|\mathbb{T}^d|^{\frac{q(r-q)}{(q+1)(r+1-q)}}}{(r+1)\left(\frac{\beta(r+1)}{8\gamma q}\right)^{\frac{q}{r+1-q}}} \|\Y\|_{\wi\L^{q+1}}^{\frac{r+1}{r+1-q}},
			\end{align}
			for a.e. $t\in[0,T]$. On integrating \eqref{cal4} over $t\in[0,T]$ and using H\"older's inequality and \eqref{bddest}, we finally obtain
		\begin{align*}
			&\sup\limits_{t\in[0,T]}\|\mathcal{Z}(t)\|_{\H}^2+\mu\int_0^T\|\nabla\mathcal{Z}(t)\|_{\H}^2\d t +\alpha\int_0^T\|\mathcal{Z}(t)\|_{\H}^2 \d t+\beta\int_0^T \|\mathcal{Z}(t)+\Y(t)\|_{\wi\L^{r+1}}^{r+1}\d t \nonumber\\&\leq\|\x\|_{\H}^2+\left(\frac{1}{\alpha}+\frac{2}{\mu}\right)
			\int_0^T\|\f(t)\|_{\V'}^2\d t+ \frac{2}{\beta\mu^{\frac{r+1}{r-1}}}(T|\mathbb{T}^d|)^{\frac{r-3}{r-1}}\int_0^T\|\Y(t)\|_{\wi\L^{r+1}}^{r+1}\d t\nonumber\\&\quad+ \frac{2\beta}{(r+1)\left(\frac{r+1}{8r}\right)^r}\int_0^T \|\Y(t)\|_{\wi\L^{r+1}}^{r+1}\d t+\frac{2\gamma(r+1-q)|\mathbb{T}^d|^{\frac{q(r-q)}{(q+1)(r+1-q)}}}{(r+1)\left(\frac{\beta(r+1)}{8\gamma q}\right)^{\frac{q}{r+1-q}}}\int_0^T\|\Y(t)\|_{\wi\L^{q+1}}^{\frac{r+1}{r+1-q}}\d t\nonumber\\& \leq C\left(\mu,\beta,\gamma,r,T,\Q,|\mathbb{T}^d|, \|\f\|_{\mathrm{L}^2(0,T;\V')}\right), \ \P\text{ -a.s.}
		\end{align*}
		Since $\X=\mathcal{Z}+\Y$, therefore one can finally  deduce  \eqref{est}. 
			%	\textcolor{Violet}{For $d=2$ and $r\in[1,3]$, one can estimate $|\langle\mathcal{B}(\mathcal{Z}+\Y),\Y\rangle|$ as} 
		\end{proof}

\section{Irreducibility}\setcounter{equation}{0}\label{sec5}
In this section, as an application of the approximate controllability results established in Section \ref{apcon}, we prove that the transition semigroup $\{\mathrm{P}_t\}_{t\geq0}$ associated to the system  \eqref{scbf} is irreducible. For that, we assume that $r\in[3,\infty)$ for $d=2$ and $r\in[3,\infty)$ for $d=3$ ($2\beta\mu> 1$ for $d=r=3$). 

For any $t\geq 0$ and $\x\in\H$, let $\X(\cdot,\x),$  be the unique strong solution (in the probability sense) to the problem \eqref{scbf}. Let $\{\mathrm{P}_t\}_{t\geq0}$ be the transition semigroup (see \cite[Chapter 2]{gdp2}) associated to the system \eqref{scbf} and is given by 
\begin{align}\label{irrE}
	\mathrm{P}_t(\varphi)=\mathbb{E}\left[\varphi(\X(t,\cdot))\right],\ t\geq 0,\ \varphi\in\C_b(\H),
	\end{align}
where $\C_b(\H)$ is the space of all bounded continuous functions on $\H$.  A probability measure $\nu$ having support in $\V$ is said to be \emph{invariant} for \eqref{scbf}, with respect to semigroup $\{\mathrm{P}_t\}_{t\geq0}$, if 
\begin{align}\label{inv}
	\int_{\H}\mathrm{P}_t\varphi(\x)\nu(\d\x)=\int_{\H}\varphi(\x)\nu(\d\x),\ \varphi\in\C_b(\H).
\end{align}  
In other words, the measure $\nu$ is invariant if $\mathrm{P}_t^*\nu=\nu$, where $\{\mathrm{P}_t^*\}_{t\geq 0}$ is the dual semigroup of $\{\mathrm{P}_t\}_{t\geq 0}$.  We say that the transition semigroup $\{\mathrm{P}_t\}_{t\geq 0}$ on $\H$ is \emph{irreducible}  if the transition probabilities $\mathrm{P}_t(\x,\mathscr{U}):=\mathrm{P}_t\mathds{1}_{\mathscr{U}}(\x)=\mathbb{P}\left\{\X(t,\x)\in\mathscr{U}\right\}$ are strictly positive for all $t>0$, $\x\in\H$ and all open sets $\mathscr{U}\subseteq\H$, where  $\mathds{1}_{\mathscr{U}}(\cdot)$ is the characteristic function on $\mathscr{U}$ (cf. \cite{pjz}). Particularly, for all $r>0$, $\x_0,\x_1\in\H$ and $\mathscr{U}:=\B(\x_1,r)=\{\x\in\H:\|\x-\x_1\|_{\H}<r\}$, then the transition semigroup $\{\mathrm{P}_t\}_{t\geq 0}$ is \emph{irreducible} if 
\begin{align*}
	P_t\mathds{1}_{\B(\x_1,r)}(\x_0)>0,
\end{align*}
for all $t>0$. This means that, starting from any point, the process reaches all points immediately. From \eqref{irrE}, the definition of irreducibility is equivalent to
\begin{align*}
	\mathbb{P}\left\{\|\X(t;\x_0)-\x_1\|_{\H}\geq r\right\}<1,\ \text{ for all }\ t>0, \ r>0,\ \x_0,\x_1\in\H.
\end{align*}
Irreducibility is an important property for the transition semigroup $\{\mathrm{P}_t\}_{t\geq0}$ since it implies that the invariant measure $\nu$ associated with $\mathrm{P}_t$ is \emph{full}, that is, $\nu(\B(\x,r))>0$ for any ball $\B(\x,r)$ of center $\x\in\H$ and $r>0$. In fact from \eqref{inv}, it follows that
$$\nu(\B(\x,r))=\int_{\H}\mathrm{P}_t\mathds{1}_{\B(\x,r)}(\x)\nu(\d \x)>0.$$

The main result of this section is the following: 
\begin{theorem}\label{mainresult}
Let  $r\in[3,\infty)$ for $d=2$ and $r\in[3,\infty)$ for $d=3$ ($2\beta\mu>1$ for $d=r=3$). Then, under the assumption 
\begin{align}\label{54}
	\sqrt{\Q}\in\mathcal{L}(\mathbb{U},\mathbb{V}),
\end{align}the	transition semigroup $\{\mathrm{P}_t\}_{t\geq0}$ associated to the system  \eqref{scbf} is irreducible.
\end{theorem}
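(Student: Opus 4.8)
The plan is to establish the equivalent assertion that for all $\x_0,\x_1\in\H$, all $T>0$ and all $r>0$ one has $\P\{\|\X(T;\x_0)-\x_1\|_{\H}<r\}>0$; by the reformulation given after \eqref{inv}, this is precisely irreducibility. The underlying idea is to \emph{realise a deterministic control through the noise}, carried out directly in the energy space $\H$. First I would invoke the approximate controllability of Section \ref{apcon}. Applying Proposition \ref{prop31} with control operator $\mathrm{B}=\sqrt{\Q}$ --- which maps $\U$ boundedly into $\H$ by \eqref{54} and $\V\hookrightarrow\H$, and has dense range since $\ker\Q=\{\0\}$ --- together with Lemma \ref{dense}, produces a deterministic control $\u\in\C([0,T];\U)$ such that the solution $\y$ of $\frac{\d\y}{\d t}+\Gamma(\y)=\f+\sqrt{\Q}\u$, $\y(0)=\x_0$, satisfies $\|\y(T)-\x_1\|_{\H}<r/2$; here the controllability argument of Lemmas \ref{exact}--\ref{lem3.3} extends to the presence of the fixed forcing $\f$, which only affects the size of $\rho$ in the finite-time extinction estimate.

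Next I would introduce the two paths $\boldsymbol{\Theta}(t):=\int_0^t\sqrt{\Q}\,\d\W(s)$ and $\boldsymbol{\Psi}(t):=\int_0^t\sqrt{\Q}\u(s)\,\d s$, and the small-ball event $E_\delta:=\{\sup_{t\in[0,T]}\|\boldsymbol{\Theta}(t)-\boldsymbol{\Psi}(t)\|_{\V\cap\wi\L^{r+1}}<\delta\}$. Under the non-degeneracy \eqref{54} together with the trace conditions of Example \ref{exmp4.1} (which, for the supercritical range, one strengthens to $\Tr(\A^{s}\Q)<\infty$ with $s>d/2$, so that $\boldsymbol{\Theta}$ is even $\mathrm{L}^\infty_x$-bounded, cf.\ \eqref{bddest} for the allied Ornstein--Uhlenbeck process), $\boldsymbol{\Theta}$ has continuous paths in $\V\cap\wi\L^{r+1}$, while $\boldsymbol{\Psi}$ is a Cameron--Martin path of its Gaussian law; the support theorem for Gaussian measures then gives $\P(E_\delta)>0$ for every $\delta>0$. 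On $E_\delta$ I would compare $\X:=\X(\cdot;\x_0)$ with $\y$ \emph{pathwise}: setting $\h:=\boldsymbol{\Theta}-\boldsymbol{\Psi}$ and $\zeta:=\X-\y-\h$, the process $\zeta$ solves, for each fixed $\omega$, the deterministic equation $\frac{\d\zeta}{\d t}+\Gamma(\X)-\Gamma(\y)=\0$ with $\zeta(0)=\0$, since the stochastic and control forcings cancel up to $\frac{\d\h}{\d t}$.

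Testing this equation with $\zeta$ and writing $\X-\y=\zeta+\h$, the quasi-$m$-accretivity \eqref{mono} yields $-(\Gamma(\X)-\Gamma(\y),\X-\y)\le\kappa\|\X-\y\|_{\H}^2\le2\kappa\|\zeta\|_{\H}^2+2\kappa\|\h\|_{\H}^2$, leaving the single cross term $(\Gamma(\X)-\Gamma(\y),\h)$, which I would bound by $\|\Gamma(\X)-\Gamma(\y)\|_{\V'+\wi\L^{\frac{r+1}{r}}}\,\|\h\|_{\V\cap\wi\L^{r+1}}$ using the local Lipschitz estimates \eqref{bilin} and \eqref{clin1}. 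A Gronwall argument then gives $\sup_{t\in[0,T]}\|\zeta(t)\|_{\H}\le C\,\psi(\delta)$ with $\psi(\delta)\to0$, where $C$ depends on the $\V\cap\wi\L^{r+1}$-norms of $\X$ and $\y$. These norms are deterministically bounded for $\y$ by Lemma \ref{exact}, but only $\P$-a.s.\ finite for $\X$; I would therefore intersect $E_\delta$ with the regularity event $R_M:=\{\sup_{t}\|\X(t)\|_{\V\cap\wi\L^{r+1}}\le M\}$. Since the strong-solution regularity \eqref{430} and Proposition \ref{engest} force $\P(R_M)\to1$ as $M\to\infty$, for $M$ large one has $\P(E_\delta\cap R_M)\ge\P(E_\delta)-\P(R_M^c)>0$; on this set $\|\X(T)-\y(T)\|_{\H}\le\|\zeta(T)\|_{\H}+\|\h(T)\|_{\H}\le C(M)\psi(\delta)+\delta$, which is $<r/2$ once $\delta$ is small. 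Combined with $\|\y(T)-\x_1\|_{\H}<r/2$, this gives $\|\X(T)-\x_1\|_{\H}<r$ on a set of positive probability, proving irreducibility.

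The hard part will be the pathwise energy estimate of the third step. Because $\Gamma$ contains the unbounded Stokes operator $\mu\A$ and the superlinear operators $\mathcal{B},\mathcal{C}_1,\mathcal{C}_2$, the cross term $(\Gamma(\X)-\Gamma(\y),\h)$ forces $\h$ into the strong norm $\V\cap\wi\L^{r+1}$; in the supercritical regime $r>3$ with $d=3$ the embedding $\V\hookrightarrow\wi\L^{r+1}$ fails, so one cannot close the estimate by energy bounds alone and must instead exploit the higher pathwise regularity of the noise guaranteed by $\Tr(\A\Q)<\infty$ (Example \ref{exmp4.1}) and the $\mathrm{L}^{r+1}(0,T;\wi\L^{3(r+1)})$-regularity of the strong solution in \eqref{430}. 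Controlling this term uniformly over $E_\delta\cap R_M$, while keeping the constant $C(M)$ independent of $\omega$, is the delicate point. An alternative that bypasses the support theorem is a Girsanov shift of $\W$ by $\u$, reducing the claim to the positivity of the transition probability of the controlled equation and then to the same comparison; I would retain the support-theorem formulation since it meshes with the density and monotonicity tools already developed in Section \ref{apcon}.
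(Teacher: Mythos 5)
Your proposal has the same architecture as the paper's proof: steer with the approximate controllability of Proposition \ref{prop31} (taking $\mathrm{B}=\sqrt{\Q}$ and Lemma \ref{dense}), compare the stochastic trajectory with the controlled deterministic one pathwise, and extract positive probability from the Gaussian law of the noise. In fact your comparison is literally the paper's: since $\X-\y+\Z=\B\W-\B\mathrm{V}$ with $\Z(t)=\int_0^t(\Gamma(\X(s))-\Gamma(\y(s)))\,\d s$, your $\zeta=\X-\y-\h$ equals $-\Z$, and your energy identity for $\zeta$ is the identity \eqref{0.3} of the paper's Step 3; both reduce to estimating the same cross term $\int_0^T\langle\Gamma(\X)-\Gamma(\y),\h\rangle\,\d t$. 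The gaps lie in the two places where you deviate. The first is that your positivity step is circular as written: to get $C(M)\psi(\delta)+\delta<r/2$ you must shrink $\delta$ \emph{after} $M$ is chosen, but to get $\P(E_\delta\cap R_M)\geq\P(E_\delta)-\P(R_M^c)>0$ you must enlarge $M$ \emph{after} $\delta$ is chosen, because $\P(E_\delta)\downarrow0$ as $\delta\downarrow0$ while $\P(R_M^c)$ does not depend on $\delta$; no order of choices closes both requirements within the union-bound scheme. The paper avoids truncation altogether: Proposition \ref{engest} bounds the energy norms of $\X$ $\P$-a.s.\ by a constant depending only on the data (via the Ornstein--Uhlenbeck reduction), so the constant $C_T$ in \eqref{0.13} is deterministic. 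The repair in your scheme is to observe that on $E_\delta$ (say $\delta\leq1$) the driving path is deterministically bounded by $\sup_{t\in[0,T]}\|\boldsymbol{\Psi}(t)\|+1$, whence the pathwise estimates give $E_\delta\subseteq R_M$ for a deterministic $M$ independent of $\delta$ --- but this has to be said, and it is exactly what Proposition \ref{engest} encodes. Moreover, your event $R_M$ invokes \eqref{430}, which requires $\x\in\V$; you start from $\x_0\in\H$, where only the energy regularity of Proposition \ref{engest} is available. The paper inserts its density argument (Steps 1--2) precisely so that the comparison runs on strong solutions, passing to $\H$-data only afterwards through the uniform Cauchy estimates \eqref{ctsdep}; you skip this step.

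The second gap is that your duality pairing $\|\Gamma(\X)-\Gamma(\y)\|_{\V'+\wi\L^{\frac{r+1}{r}}}\|\h\|_{\V\cap\wi\L^{r+1}}$ --- the honest one at the weak-solution level --- forces the noise into $\C([0,T];\V\cap\wi\L^{r+1})$. Under the theorem's sole hypothesis \eqref{54} the paths of $\sqrt{\Q}\W$ lie in $\C([0,T];\V)$, and for $d=3$, $r>5$ one has $\V\not\hookrightarrow\wi\L^{r+1}$; your proposed remedy ($\Tr(\A^{s}\Q)<\infty$ with $s>d/2$) strengthens the hypotheses, so in that supercritical range you prove a weaker statement than the one claimed. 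The paper's design makes the opposite trade: thanks to the density step, the comparison is run on strong solutions, for which $\Gamma(\X^n(t))-\Gamma(\y^n(t))\in\H$ for a.e.\ $t$, so the cross term is paired as $\V\times\V'$ and only $\|\B\W-\B\mathrm{V}\|_{\C([0,T];\V)}$ enters (Step 3, \eqref{0.8}--\eqref{0.13}) --- precisely the regularity \eqref{54} supplies. (One may object that the paper's bounds \eqref{0.9}--\eqref{0.11} are themselves borrowed from the weak-solution Lipschitz estimates \eqref{bilin} and \eqref{clin1}, which control only the $\V'+\wi\L^{\frac{r+1}{r}}$-norm, so the case $r>5$ is delicate there too; but the strong-solution framework at least renders the pairing meaningful.) In short, the missing ingredient in your outline is the paper's density-to-strong-solutions step, and your truncation scheme must be replaced by deterministic pathwise bounds; with those two changes your argument becomes the paper's proof.
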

\begin{proof}
	The proof is divided into the following number of steps:
	\vskip 2mm
	\noindent
	\textbf{Step 1.} \emph{A density argument.}
Let $\x\in\H$. Since the embedding $\V\hookrightarrow\H$ is dense, so there exists a sequence $\{\x_n\}_{n\in\N}\in\V$ such that
	\begin{align}\label{density}
		\x_n\to\x \ \text{ in } \ \H \ \text{ as }\  n\to\infty.
\end{align}
Further, for any $\f\in\mathrm{L}^2(0,T;\V')$, we find a sequence $\{\f^n\}_{n\in\N}$ in $\mathrm{L}^2(0,T;\H)$ such that $\f^n\to\f$ in $\mathrm{L}^2(0,T;\V')$.
Let $\X^n(\cdot):=\X^n(\cdot,\x_n)$ denote the solution corresponding to $\x_n$ such that for each fixed $\omega\in\Omega$, the following holds: 
\begin{equation}\label{scbfden}
	\left\{
	\begin{aligned}
	&	\d\X^n(t)+\{\mu\A\X^n(t)+\mathcal{B}(\X^n(t))+\alpha\X^n(t)+\beta\mathcal{C}_1(\X^n(t))+\gamma\mathcal{C}_2(\X^n(t))\}\d t\\&\hspace{1.1cm}=\f^n(t)\d t+\sqrt{\mathrm{Q}}\d\W(t),\\ 
	&	\X^n(0)=\x_n,
	\end{aligned}
	\right.
\end{equation}
for a.e. $t\in[0,T]$. One can rewrite the stochastic CBFeD equations \eqref{scbf} in $\H$ as 
\begin{align}\label{0.1}
	\X^n(t)+\int_0^t \Gamma(\X^n(s))\d s=\x_n+ \int_0^t \f^n(s)\d s+\B\W(t),
\end{align}
for all $t\in[0,T]$, where $\B=\sqrt{\Q}$. Similarly, let $\y^n(\cdot,\x_n):=\y^n(\cdot)$ be the solution corresponding to the following controlled problem:
\begin{equation}\label{ccbf}
	\left\{
	\begin{aligned}
		\frac{\d \y^n(t)}{\d t}+\mu\A\y^n(t)+\alpha\y^n(t)+\mathcal{B}(\y^n(t))+\beta\mathcal{C}_1(\y^n(t)) +\gamma\mathcal{C}_2(\y^n(t))&=\f^n(t)+\mathrm{B}\u(t),  \\ 
		\y^n(0)&=\x_n,
	\end{aligned}
	\right. 
\end{equation}	
for a.e. $t\in[0,T]$ and $\u\in\mathrm{L}^2(0,T;\U)$. From \cite{KWH}, we know that the above controlled CBF equations has unique weak solution $\y^n\in\mathrm{C}([0,T];\H)\cap\mathrm{L}^2(0,T;\V)\cap\mathrm{L}^{r+1}(0,T;\wi\L^{r+1})$ satisfying the energy equality. Since $\x_n\in\V$, and $\f^n, \mathrm{B}\u\in\mathrm{L}^2(0,T;\H)$, the solution is strong and has the regularity $\y^n\in\mathrm{C}([0,T];\V)\cap\mathrm{L}^2(0,T;\D(\A))\cap\mathrm{L}^{r+1}(0,T;\wi\L^{3(r+1)})$ and the equation \eqref{ccbf} is satisfied in $\H$ for a.e. $t\in[0,T]$.  Taking the inner product with $\y^n(\cdot)$ to the first equation in \eqref{ccbf}, we find 
\begin{align*}
	&\|\y^n(t)\|_{\H}^2+2\mu\int_0^t\|\nabla\y^n(s)\|_{\H}^2\d s+2\alpha\int_0^t\|\y^n(s)\|_{\H}^2\d s+ 2\beta\int_0^t\|\y^n(s)\|_{\wi\L^{r+1}}^{r+1}\d s\nonumber\\&= \|\x_n\|_{\H}^2-2\gamma\int_0^t\|\y^n(s)\|_{\wi\L^{q+1}}^{q+1}\d s+2\int_0^t(\B\u(s),\y^n(s))\d s+2\int_0^t(\f^n(s),\y^n(s))\d s 
	%\nonumber\\&\leq \|\x_n\|_{\H}^2+2|\gamma||\mathbb{T}^d|^{\frac{r-q}{r+1}}\int_0^t \|\y^n(s)\|_{\wi\L^{r+1}}^{q+1}\d s+2\alpha\int_0^t\|\y^n(s)\|_{\H}^2 + \frac{1}{\alpha}\int_0^t\|\B\u(s)\|_{\H}^2\d s\nonumber\\&\quad+\frac{1}{\alpha}\int_0^t\|\f^n(s)\|_{\H}^2\d s 
	\nonumber\\&\leq\|\x_n\|_{\H}^2+ \varkappa(2|\gamma|)^{\frac{r+1}{r-q}}|\mathbb{T}^d|t+\beta\int_0^t \|\y^n(s)\|_{\wi\L^{r+1}}^{r+1}\d s+ 2\alpha\int_0^t\|\y^n(s)\|_{\H}^2 + \frac{1}{\alpha}\int_0^t\|\B\u(s)\|_{\H}^2\d s\nonumber\\&\quad+\frac{1}{\alpha}\int_0^t\|\f^n(s)\|_{\H}^2\d s,
\end{align*}
for all $t\in[0,T]$ and where $\varkappa=\left(\frac{\beta(r+1)}{q+1}\right)^{\frac{q+1}{r-q}}\left(\frac{r-q}{r+1}\right)$. Thus, it is immediate that 
\begin{align}\label{53}
	&\sup_{t\in[0,T]}\|\y^n(t)\|_{\H}^2+2\mu\int_0^T\|\nabla\y^n(s)\|_{\H}^2\d s+\beta\int_0^T\|\y^n(s)\|_{\wi\L^{r+1}}^{r+1}\d s\nonumber\\& \leq \|\x_n\|_{\H}^2+\varkappa(2|\gamma|)^{\frac{r+1}{r-q}}|\mathbb{T}^d|T+ \frac{1}{\alpha}\int_0^T\|\B\u(s)\|_{\H}^2\d s+\frac{1}{\alpha}\int_0^T\|\f^n(s)\|_{\H}^2\d s. 
\end{align}
Since $\x_n\to\x$ in $\H$, we know that there exists an $\eps>0$ such that $\|\x_n\|_{\H}\leq \varepsilon +\|\x\|_{\H}$ for all $n>N$ and $\|\x_{n}\|_{\H}\leq\max\limits_{1\leq k\leq N}\|\x_k\|_{\H}$. Similarly, one can show that $\|\f^n\|_{\mathrm{L}^2(0,T;\H)}$ is bounded independent of $n$. Therefore, the right hand side of \eqref{53} is independent of $n$ and we denote the constant by $C_T$. Therefore, by the Banach-Alaoglu theorem, we have the following limits: 
\begin{equation}\label{5p9}
	\left\{
\begin{aligned}
	&\y^n\xrightharpoonup{w^*}\y\ \text{ in }\ \mathrm{L}^{\infty}(0,T;\H),\\
	&\y^n\xrightharpoonup{w}\y\ \text{ in }\ \mathrm{L}^2(0,T;\V)\cap\mathrm{L}^{r+1}(0,T;\wi\L^{r+1}),\\
	&\frac{\d\y^n}{\d t}\xrightharpoonup{w}\frac{\d\y}{\d t}\ \text{ in }\ \mathrm{L}^{\frac{r+1}{r}}(0,T;\V'+\L^{\frac{r+1}{r}}),\\
	&\y^n\to\y\ \text{ in }\ \mathrm{L}^2(0,T;\H), 
\end{aligned}
\right. 
\end{equation}
and the final convergence is along a subsequence  due an application of the Aubin-Lions compactness lemma. Moreover, one can pass to the limit $n\to\infty$ in \eqref{ccbf} and obtain that  the limit  $\y(\cdot)$ is the unique weak solution of the following system in $\V'$: 
\begin{equation}\label{5.9}
	\left\{
	\begin{aligned}
		\frac{\d \y(t)}{\d t}+\mu\A\y(t)+\alpha\y(t)+\mathcal{B}(\y(t))+\beta\mathcal{C}_1(\y(t)) +\gamma\mathcal{C}_2(\y(t))&=\f(t)+\mathrm{B}\u(t),  \\ 
		\y(0)&=\x,
	\end{aligned}
	\right. 
\end{equation}	
for a.e. $t\in[0,T]$. 
\vskip 2mm
\noindent
\textbf{Step 2.} \emph{Strong convergence}.
For our purpose, the final strong convergence in \eqref{5p9} is not enough. We need to show that $\y^n\to\y\ \text{ in }\ \mathrm{C}([0,T];\H),$ along a subsequence.

 Let $\{\x_{n_k}\}_{k\in\mathbb{N}}$ and $\{\x_{n_j}\}_{j\in\mathbb{N}}$ be two subsequences of $\{\x_n\}_{n\in\mathbb{N}}$ such that \eqref{ccbf} holds for $\y^{n_k}(\cdot)$ and $\y^{n_j}(\cdot)$ with the initial data $\y^{n_k}(0)=\x^{n_k}$ and $\y^{n_j}(0)=\x^{n_j},$  and the forcing $\{\f^{n_k}\}_{k\in\mathbb{N}}$ and $\{\f^{n_j}\}_{j\in\mathbb{N}}$ in $\mathrm{L}^{2}(0,T;\H)$, respectively. Then, we have
\begin{align}\label{density1}
	\x^{n_k}_0, \x^{n_j}_0\to\x \ \text{ in } \ \H\ \text{ and }\ 
\f^{n_k},	\f^{n_j}\to \f \ \text{ in }\ \mathrm{L}^2(0,T;\V'),
\end{align}
and the completeness of the spaces $\H$ and $\mathrm{L}^2(0,T;\V')$ implies that the sequences  $\{\x_n\}_{n\in\mathbb{N}}$  and $\{\f^{n}\}_{n\in\mathbb{N}}$  are Cauchy in $\H$ and $\mathrm{L}^2(0,T;\V')$, respectively.  Moreover for a.e. $t\in[0,T]$, we also have
\begin{align}\label{512}
	&\frac{\d}{\d t}(\y^{n_k}(t)-\y^{n_j}(t))+ \mu(\A\y^{n_k}(t)-\A\y^{n_j}(t))+
	\beta(\mathcal{C}_1(\y^{n_k}(t))-\mathcal {C}_1(\y^{n_j}(t)))\nonumber\\& = \f^{n_k}(t)-\f^{n_j}(t)-(\mathcal{B}(\y^{n_k}(t)) -\mathcal{B}(\y^{n_j}(t)))-\gamma(\mathcal{C}_2(\y^{n_k}(t))-\mathcal{C}_2(\y^{n_j}(t))),
\end{align} 
in $\H$. Taking the inner product with $\y^{n_k}(\cdot)-\y^{n_j}(\cdot)$ and using the calculations performed  in \cite[Propositions 4.1, 4.3]{sKM}, we obtain for a.e. $t\in[0,T]$
\begin{align}\label{eng5}
	&\frac{1}{2}\frac{\d}{\d t}\|\y^{n_k}(t)-\y^{n_j}(t)\|_{\H}^2 +\frac{\mu}{2} \|\nabla(\y^{n_k}(t)-\y^{n_j}(t))\|_{\H}^2+
	\frac{\beta}{4}\||\y^{n_k}(t)|^{\frac{r-1}{2}}(\y^{n_k}(t)-\y^{n_j}(t))\|_{\H}^2\nonumber\\&\quad+\frac{\beta}{4}\||\y^{n_j}(t)|^{\frac{r-1}{2}}(\y^{n_k}(t)-\y^{n_j}(t))\|_{\H}^2\nonumber\\&
	\leq\eta\|\y^{n_k}(t)-\y^{n_j}(t)\|_{\H}^2+\left(\frac{1}{2\alpha}+\frac{1}{\mu}\right)\|\f^{n_k}(t)-\f^{n_j}(t)\|_{\V'},
\end{align}
where $\eta=\eta_1+\eta_2+\eta_3+\frac{1}{2}$. Using \eqref{C1} in \eqref{eng5},  we arrive at
\begin{align}\label{eng6}
	&\frac{1}{2}\frac{\d}{\d t}\|\y^{n_k}(t)-\y^{n_j}(t)\|_{\H}^2 + \frac{\mu}{2} \|\nabla(\y^{n_k}(t)-\y^{n_j}(t))\|_{\H}^2+ \frac{\beta}{2^{r}}\|\y^{n_k}(t)-\y^{n_j}(t)\|_{\wi\L^{r+1}}^{r+1}\nonumber\\&\leq
	2\eta\|\y^{n_k}(t)-\y^{n_j}(t)\|_{\H}^2+\left(\frac{1}{2\alpha}+\frac{1}{\mu}\right)\|\f^{n_k}(t)-\f^{n_j}(t)\|_{\V'},
\end{align}
for a.e. $t\in[0,T]$. Now integrating \eqref{eng6} over $(0,t)$,  we obtain
\begin{align*}
	& \|\y^{n_k}(t)-\y^{n_j}(t)\|_{\H}^2+\mu\int_0^t \|\y^{n_k}(s)-\y^{n_j}(s)\|_{\V}^2\d s+\frac{\beta}{2^{r-1}} \int_0^t \|\y^{n_k}(s)-\y^{n_j}(s)\|_{\wi\L^{r+1}}^{r+1} \d s\nonumber
	\\&\leq\|\y^{n_k}(0)-\y^{n_j}(0)\|_{\H}^2+2\eta\int_0^t \|\y^{n_k}(s)-\y^{n_j}(s)\|_{\H}^2\d s+\left(\frac{1}{\alpha}+\frac{2}{\mu}\right)\int_0^t\|\f^{n_k}(s)-\f^{n_j}(s)\|_{\V'}\d s, 
\end{align*}
for all $t\in[0,T]$. Applying Gronwall's inequality,  we deduce 
\begin{align}\label{ctsdep}
	& \|\y^{n_k}(t)-\y^{n_j}(t)\|_{\H}^2+\mu\int_0^t \|\y^{n_k}(s)-\y^{n_j}(s)\|_{\V}^2\d s+\frac{\beta}{2^{r-1}} \int_0^t \|\y^{n_k}(s)-\y^{n_j}(s)\|_{\wi\L^{r+1}}^{r+1} \d s\nonumber\\&\leq
	e^{2\eta T}\left\{\|\x^{n_k}-\x^{n_j}\|_{\H}^2+\left(\frac{1}{\alpha}+\frac{2}{\mu}\right)\int_0^T\|\f^{n_k}(t)-\f^{n_j}(t)\|_{\V'}\d t\right\},
\end{align}
for all $t\in[0,T]$ and for all $j,k\in\N.$  This shows that the sequence $\{\y^{n_k}(t)\}_{k\in\N}$ for all $t\in[0,T]$  is a uniformly Cauchy sequence in $\H$ and since $\H$ is complete, we obtain the following uniform convergence:
\begin{align}\label{stc1}
	\y^{n_k}(t)\to\y (t)\   \text{ in } \  \H \  \mbox{ for all }\ t\in[0,T]. 
\end{align}
Since $\y^{n_k}\in\C([0,T];\H)$, the uniform convergence implies $\y\in\C([0,T];\H)$.  By a similar reasoning as above, we obtain
\begin{align}\label{stc2}
	\y^{n_k}\to\y \  \ \text{in} \  \mathrm{L}^2(0,T;\V) \ \text{ and } \  \y^{n_k}\to\y \  \ \text{in} \  \mathrm{L}^{r+1}(0,T;\wi\L^{r+1}).
\end{align}
Note that from \eqref{stc1}, by continuity in time,  we have following result:
\begin{align}\label{cty}
 \y^{n_k}(T,\x^{n_k})\to\y(T,\x) \  \text{ in } \ \H.
\end{align}

 Let $\{\x_{n_k}\}_{k\in\mathbb{N}}$ and $\{\x_{n_j}\}_{j\in\mathbb{N}}$ be two subsequences of $\{\x_n\}_{n\in\mathbb{N}}$ such that \eqref{scbfden} holds for $\X^{n_k}(\cdot)$ and $\X^{n_j}(\cdot)$ with the initial data $\X^{n_k}(0)=\x^{n_k}$ and $\X^{n_j}(0)=\x^{n_j},$ in $\V$,   and the forcing $\{\f^{n_k}\}_{k\in\mathbb{N}}$ and $\{\f^{n_j}\}_{j\in\mathbb{N}}$ in $\mathrm{L}^{2}(0,T;\H)$, respectively. 
%Then, we have
%\begin{align}\label{density2}
%	\x^{n_k}_0, \x^{n_j}_0\to\x \ \text{ in } \ \H\ \text{ and }\ 
%	\f^{n_k},	\f^{n_j}\to \f \ \text{ in }\ \mathrm{L}^2(0,T;\V'),
%\end{align}
%and the completeness of the spaces $\H$ and $\mathrm{L}^2(0,T;\V')$ implies that the sequences  $\{\x_n\}_{n\in\mathbb{N}}$  and $\{\f^{n}\}_{n\in\mathbb{N}}$  are Cauchy in $\H$ and $\mathrm{L}^2(0,T;\V')$, respectively.  
Then \eqref{density1} holds true.  Moreover for a.e. $t\in[0,T]$, we also have 
\begin{align}\label{519}
	&\frac{\d}{\d t}(\X^{n_k}(t)-\X^{n_j}(t))+ \mu(\A\X^{n_k}(t)-\A\X^{n_j}(t))+
	\beta(\mathcal{C}_1(\X^{n_k}(t))-\mathcal {C}_1(\X^{n_j}(t)))\nonumber\\& = \f^{n_k}(t)-\f^{n_j}(t)-(\mathcal{B}(\X^{n_k}(t)) -\mathcal{B}(\X^{n_j}(t)))-\gamma(\mathcal{C}_2(\X^{n_k}(t))-\mathcal{C}_2(\X^{n_j}(t))),
\end{align} 
in $\H$ (a random PDE).  
Since the systems \eqref{512} and \eqref{519} are the same,  we obtain the following uniform convergence:
\begin{align}\label{stc3}
	\X^{n_k}(t)\to\X (t)\   \text{ in } \  \H \  \mbox{ for all }\ t\in[0,T], \ \mathbb{P}\text{-a.s.}
\end{align}
Since $\X^{n_k}\in\C([0,T];\H)$, $\mathbb{P}\text{-a.s.,}$ the uniform convergence implies $\X\in\C([0,T];\H)$, $\mathbb{P}\text{-a.s.}$.

\vskip 2mm
\noindent
\textbf{Step 3:} \emph{Some useful estimates.} We denote $n_k$ as $n$ in the rest of the discussion. 
Let us first write  the system \eqref{ccbf} in the integral form
\begin{align}\label{0.2}
	\y^n(t)+\int_0^t \Gamma(\y^n(s))\d s =\x_n+\int_0^t \f(s)\d s+\B\mathrm{V}(t),\ t\in[0,T],
\end{align}
in $\H$, where $\mathrm{V}(t)=\int_0^t \u(s)\d s$. On subtracting \eqref{0.2} from \eqref{0.1}, we get for all $t\in[0,T]$
\begin{align}\label{0.22}
	\X^n(t)-\y^n(t)+\Z^n(t)= \B\W(t)-\B\mathrm{V}(t),\ \text{ in }\ \H, 
\end{align}
where $\Z^n(t):=\int_0^t (\Gamma(\X^n(s))-\Gamma(\y^n(s)))\d s$. Then, $\dot{\Z}^n(t)=\Gamma(\X^n(t))-\Gamma(\y^n(t))$, for a.e. $t\in[0,T]$. Therefore, on taking the inner product with $\Gamma(\X^n(\cdot))-\Gamma(\y^n(\cdot))$ in \eqref{0.22}, we find
\begin{align}\label{0.3}
	&\int_0^t \langle\X^n(s)-\y^n(s),\Gamma(\X^n(s))-\Gamma(\y^n(s))\rangle\d s+\int_0^t (\Z^n(s),\dot{\Z}^n(s))d s \nonumber\\&
	=\int_0^t \langle\B(\W(s)-\mathrm{V}(s)),\Gamma(\X^n(s))-\Gamma(\y^n(s))\rangle\d s,
\end{align}
for all $t\in[0,T]$. Moreover, we have the following monotone property (\cite{sKM}):
%\begin{align*}
%	&\langle(\F+k_1\I)(\X-\y),\X-\y\rangle\nonumber\\&\geq\frac{\mu}{2} \|\nabla(\X-\y)\|_{\H}^2+(\alpha+k_1-\rho)\|\X-\y\|_{\H}^2
%	+\frac{\beta}{2^r}\|\X-\y\|_{\wi\L^{r+1}}^{r+1},
%\end{align*}
%for sufficiently large $k_1\geq\rho$. So, for $k_1=\rho$, we obtain
\begin{align}\label{0.4}
	&\langle(\Gamma+\rho\I)(\X^n-\y^n),\X^n-\y^n\rangle\geq\frac{\mu}{2} \|\nabla(\X^n-\y^n)\|_{\H}^2+\alpha\|\X^n-\y^n\|_{\H}^2
	+\frac{\beta}{2^r}\|\X^n-\y^n\|_{\wi\L^{r+1}}^{r+1},
\end{align}
where $\eta_7:=\frac{r-3}{2\mu(r-1)}\left(\frac{2}{\beta\mu(r-1)}\right)^{\frac{2}{r-3}}$. Using \eqref{0.4} in \eqref{0.3}, we obtain
\begin{align}\label{0.6}
	&\frac{\mu}{2}\int_0^t\|\nabla(\X^n(s)-\y^n(s))\|_{\H}^2\d s+\alpha\int_0^t \|\X^n(s)-\y^n(s)\|_{\H}^2\d s
	\nonumber\\&\quad+\frac{\beta}{2^r}\int_0^t \|\X^n(s)-\y^n(s)\|_{\wi\L^{r+1}}^{r+1}\d s+ \frac{1}{2}\|\Z^n(t)\|_{\H}^2\nonumber\\&\leq\eta_7\int_0^t \|\X^n(s)-\y^n(s)\|_{\H}^2\d s+\int_0^t \|\B(\W(s)-\mathrm{V}(s))\|_{\V}\|\Gamma(\X^n(s))-\Gamma(\y^n(s))\|_{\V'}\d s\nonumber\\&=:I_1+I_2,
\end{align}
for all $t\in[0,T]$. Let us calculate $I_1$ by using \eqref{0.22} and the Cauchy-Schwarz inequality as
\begin{align*}
&\eta_7\int_0^t \|\X^n(s)-\y^n(s)\|_{\H}^2\d s\nonumber\\&=
	\eta_7\int_0^t (\X^n(s)-\y^n(s),-\Z^n(s)+\B\W(s)-\B\mathrm{V}(s))\d s \nonumber\\&\leq\eta_7\int_0^t \|\X^n(s)-\y^n(s)\|_{\H}\|\Z^n(s)\|_{\H}\d s+\eta_7\int_0^t \|\X^n(s)-\y^n(s)\|_{\H}\|\B\W(s)-\B\mathrm{V}(s)\|_{\H}\d s \nonumber\\&\leq\frac{\alpha}{2}\int_0^t \|\X^n(s)-\y^n(s)\|_{\H}^2\d s+ \frac{\eta_7^2}{2\alpha}\int_0^t \|\Z^n(s)\|_{\H}^2\d s\no\\&\quad+\eta_7\int_0^t \|\X^n(s)-\y^n(s)\|_{\H}\|\B\W(s)-\B\mathrm{V}(s)\|_{\H}\d s,
\end{align*}
for all $t\in[0,T]$. Combining the above estimate together with \eqref{0.6}, we get
\begin{align*}
		&\frac{\mu}{4}\int_0^t\|\nabla(\X^n(s)-\y^n(s))\|_{\H}^2\d s+\frac{\alpha}{2}\int_0^t \|\X^n(s)-\y^n(s)\|_{\H}^2\d s\nonumber\\&\quad
		+\frac{\beta}{2^r}\int_0^t \|\X^n(s)-\y^n(s)\|_{\wi\L^{r+1}}^{r+1}\d s+\frac{1}{2}\|\Z^n(t)\|_{\H}^2 \nonumber\\&\leq\frac{\eta_7^2}{2\alpha}\int_0^t \|\Z^n(s)\|_{\H}^2\d s+\eta_7\int_0^t \|\X^n(s)-\y^n(s)\|_{\H} \|\B\W(s)-\B\mathrm{V}(s)\|_{\H} \d s\nonumber\\&\quad+\int_0^t
	\|\B(\W(s)-\mathrm{V}(s))\|_{\V}\|\Gamma(\X^n(s))-\Gamma(\y^n(s))\|_{\V'}\d s, 
\end{align*}
for all $t\in[0,T]$. On applying Gronwall's inequality, we obtain
\begin{align}\label{67}
&\frac{\mu}{2}\int_0^t\|\nabla(\X^n(s)-\y^n(s))\|_{\H}^2\d s+\frac{\alpha}{2}\int_0^t \|\X^n(s)-\y^n(s)\|_{\H}^2\d s\nonumber\\&\quad+
\frac{\beta}{2^r}\int_0^t \|\X^n(s)-\y^n(s)\|_{\wi\L^{r+1}}^{r+1}\d s+\frac{1}{2}\|\Z^n(t)\|_{\H}^2 \nonumber\\&\leq\bigg\{\int_0^T \|\X^n(s)-\y^n(s)\|_{\H}\|\B\W(s)-\B\mathrm{V}(s)\|_{\H}\d s \nonumber\\&\quad+\int_0^T
\|\B(\W(s)-\mathrm{V}(s))\|_{\V}\|\Gamma(\X^n(s))-\Gamma(\y^n(s))\|_{\V'}\d s\bigg\} e^{\frac{\eta_7^2}{\alpha}T}\nonumber\\&\leq \bigg\{\sup_{t\in[0,T]}\|\B\W(s)-\B\mathrm{V}(s)\|_{\H}\int_0^T\|\X^n(s)-\y^n(s)\|_{\H}\d s \nonumber\\&\quad+\sup_{t\in[0,T]}\|\B(\W(s)-\mathrm{V}(s))\|_{\V}\int_0^T
\|\Gamma(\X^n(s))-\Gamma(\y^n(s))\|_{\V'}\d s\bigg\}e^{\frac{\eta_7^2}{\alpha}T} \nonumber\\&\leq \|\B\W-\B\mathrm{V}\|_{\mathrm{C}([0,T];\V)} \bigg\{\int_0^T\|\X^n(s)-\y^n(s)\|_{\H}\d s +\int_0^T\|\Gamma(\X^n(s))-\Gamma(\y^n(s))\|_{\V'}\d s\bigg\} e^{\frac{\eta_7^2}{\alpha}T},
\end{align}
for all $t\in[0,T]$.   
%Since \textcolor{teal}{$\B:=\sqrt{\Q}\in\mathrm{L}^2(0,T;\H)\cap\mathrm{C}(0,T;\U)$,} where $\U\subset\H$. Thus from above, we have 
%\begin{align}\label{0.7}
%&\frac{\mu}{2}\int_0^T\|\nabla(\X(s)-\y(s))\|_{\H}^2\d s+\frac{\alpha}{2}\int_0^T \|\X(s)-\y(s)\|_{\H}^2\d s+
%\frac{\beta}{2^r}\int_0^T \|\X(s)-\y(s)\|_{\wi\L^{r+1}}^{r+1}\d s+\frac{1}{2}\|\Z(T)\|_{\H}^2 \nonumber\\&\leq\left(\frac{\alpha}{\rho^2}\right)e^{\frac{\rho^2}{\alpha}T}\rho C_1\int_0^T \|\X(s)-\y(s)\|_{\H}\d s+\left(\frac{\alpha}{\rho^2}\right)e^{\frac{\rho^2}{\alpha}T}\rho C_2
%\int_0^T\|\F(\X(s))-\F(\y(s))\|_{\V'+\wi\L^{\frac{r+1}{r}}}\nonumber\\&\leq\left(\frac{\alpha}{\rho}\right) e^{\frac{\rho^2}{\alpha}T}\max\{C_1,C_2\}\left(\int_0^T \|\X(s)-\y(s)\|_{\H}\d s+ \int_0^T\|\F(\X(s))-\F(\y(s))\|_{\V'+\wi\L^{\frac{r+1}{r}}}\right)\nonumber\\&\leq\left(\frac{\alpha}{\rho}\right) e^{\frac{\rho^2}{\alpha}T}\max\{C_1,C_2\}\left(T^{\frac{1}{2}}\left(\int_0^T \|\X(s)-\y(s)\|_{\H}^2\d s\right)^{\frac{1}{2}}+ \int_0^T\|\F(\X(s))-\F(\y(s))\|_{\V'+\wi\L^{\frac{r+1}{r}}}\right),
%\end{align}
%where \textcolor{teal}{$C_1:=\sup\limits_{t\in[0,T]}\|\B\W(t)-\B\mathrm{V}(t)\|_{\H}$} and  \textcolor{teal}{$C_2:=\sup\limits_{t\in[0,T]}\|\B\W(t)-\B\mathrm{V}(t)\|_{\V\cap\wi\L^{r+1}}.$} 
We now calculate the final integral in \eqref{67} as
\begin{align}\label{0.8}
&\int_0^T\|\Gamma(\X^n(t))-\Gamma(\y^n(t))\|_{\V'}\d t\nonumber\\&\leq\mu
\int_0^T\|\A\X^n(t)-\A\y^n(t)\|_{\V'}\d t+\int_0^T
\|\mathcal{B}(\X^n(t))-\mathcal{B}(\y^n(t))\|_{\V'}\d t \nonumber\\&\quad+\beta\int_0^T \|\mathcal{C}_1(\X^n(t))-\mathcal{C}_1(\y^n(t))\|_{\V'}\d t+\gamma\int_0^T \|\mathcal{C}_2(\X^n(t))-\mathcal{C}_2(\y^n(t))\|_{\V'}\d t.
\end{align}
From \cite[Subsection 2.3]{MT2}, we infer by using H\"older's inequality, the estimates \eqref{est} and \eqref{53}  that
\begin{align}\label{0.9}
&\int_0^T\|\mathcal{B}(\X^n(t))-\mathcal{B}(\y^n(t))\|_{\V'}\d t \nonumber\\&\leq\int_0^T \left(\|\X^n(t)\|_{\H}^{\frac{r-3}{r-1}}\|\X^n(t)\|_{\wi\L^{r+1}}^{\frac{2}{r-1}} +\|\y^n(t)\|_{\H}^{\frac{r-3}{r-1}}\|\y^n(t)\|_{\wi\L^{r+1}}^{\frac{2}{r-1}}\right)\|\X^n(t)-\y^n(t)\|_{\wi\L^{r+1}}\d t \nonumber\\&\leq\bigg[\left(\int_0^T\|\X^n(t)\|_{\H}^{\frac{r-3}{r-2}}\d t\right)^{\frac{r-2}{r-1}} \left(\int_0^T\|\X^n(t)\|_{\wi\L^{r+1}}^{r+1}\d t\right)^{\frac{2}{r^2-1}}+ \left(\int_0^T\|\y^n(t)\|_{\H}^{\frac{r-3}{r-2}}\d t\right)^{\frac{r-2}{r-1}} \nonumber\\&\quad\times\left(\int_0^T\|\y^n(t)\|_{\wi\L^{r+1}}^{r+1}\d t\right)^{\frac{2}{r^2-1}}\bigg] \times\left(\int_0^T\|\X^n(t)-\y^n(t)\|_{\wi\L^{r+1}}^{r+1}\d t\right)^{\frac{1}{r+1}}
\nonumber\\&\leq T^{\frac{r-2}{r-1}}\left[\sup\limits_{t\in[0,T]}\|\X^n(t)\|_{\H}^{\frac{r-3}{r-2}} \left(\int_0^T\|\X^n(t)\|_{\wi\L^{r+1}}^{r+1}\d t\right)^{\frac{2}{r^2-1}}+ \sup\limits_{t\in[0,T]}\|\y^n(t)\|_{\H}^{\frac{r-3}{r-2}}\left(\int_0^T\|\y^n(t)\|_{\wi\L^{r+1}}^{r+1}\d t\right)^{\frac{2}{r^2-1}}\right]\nonumber\\&\quad\times \left(\int_0^T\|\X^n(t)-\y^n(t)\|_{\wi\L^{r+1}}^{r+1}\d t\right)^{\frac{1}{r+1}}
\nonumber\\&\leq
C_T\left(\int_0^T\|\X^n(t)-\y^n(t)\|_{\wi\L^{r+1}}^{r+1}\d t\right)^{\frac{1}{r+1}},
\end{align}
for $r\geq 3$. By using  the estimates \eqref{est} and \eqref{53},  we estimate 
\begin{align}\label{0.10}
	&\int_0^T\|\mathcal{C}_1(\X^n(t))-\mathcal{C}_1(\y^n(t))\|_{\V'}\d t \nonumber\\&\leq C\int_0^T \left(\|\X^n(t)\|_{\wi\L^{r+1}}^{r-1} + \|\y^n(t)\|_{\wi\L^{r+1}}^{r-1}\right)\|\X^n(t)-\y^n(t)\|_{\wi\L^{r+1}}\d t \nonumber\\&\leq T^{\frac{1}{r+1}}
	\left[\left(\int_0^T\|\X^n(t)\|_{\wi\L^{r+1}}^{r+1}\d t\right)^{\frac{r-1}{r+1}} +\left(\int_0^T\|\y^n(t)\|_{\wi\L^{r+1}}^{r+1}\d t\right)^{\frac{r-1}{r+1}}\right] \left(\int_0^T\|\X^n(t)-\y^n(t)\|_{\wi\L^{r+1}}^{r+1}\d t\right)^{\frac{1}{r+1}}\nonumber\\&\leq C_T\left(\int_0^T\|\X^n(t)-\y^n(t)\|_{\wi\L^{r+1}}^{r+1}\d t\right)^{\frac{1}{r+1}},
\end{align}
and 
\begin{align*}
	&\int_0^T\|\mathcal{C}_2(\X^n(t))-\mathcal{C}_2(\y^n(t))\|_{\V'}\d t\leq C_T\left(\int_0^T\|\X^n(t)-\y^n(t)\|_{\wi\L^{q+1}}^{q+1}\right)^{\frac{1}{q+1}}.
\end{align*}
Therefore, from \eqref{0.8}, we conclude 
\begin{align}\label{0.11}
&\int_0^T\|\Gamma(\X^n(s))-\Gamma(\y^n(s))\|_{\V'}\d t\nonumber\\&\leq\mu T^{\frac{1}{2}} \left(\int_0^T \|\nabla(\X^n(t)-\y^n(t))\|_{\H}^2\d t\right)^{\frac{1}{2}}+C_T \left(\int_0^T\|\X^n(t)-\y^n(t)\|_{\wi\L^{r+1}}^{r+1}\right)^{\frac{1}{r+1}}\nonumber\\&\quad+C_T\left(\int_0^T\|\X^n(t)-\y^n(t)\|_{\wi\L^{q+1}}^{q+1}\right)^{\frac{1}{q+1}}.
\end{align}
On substituting \eqref{0.11} in \eqref{67}, we obtain
\begin{align}\label{0.12}
	&\frac{\mu}{2}\int_0^T\|\nabla(\X^n(s)-\y^n(s))\|_{\H}^2\d s+\frac{\alpha}{2}\int_0^T \|\X^n(s)-\y^n(s)\|_{\H}^2\d s\nonumber\\&\quad+\frac{\beta}{2^r}\int_0^T \|\X^n(s)-\y^n(s)\|_{\wi\L^{r+1}}^{r+1}\d s+ \frac{1}{2}\|\Z^n(T)\|_{\H}^2\nonumber\\&\leq \|\B\W-\B\mathrm{V}\|_{\mathrm{C}([0,T];\V)}\bigg\{T^{\frac{1}{2}}\left(\int_0^T \|\X^n(s)-\y^n(s)\|_{\H}^2\d s\right)^{\frac{1}{2}}+\mu T^{\frac{1}{2}} \left(\int_0^T \|\nabla(\X^n(t)-\y^n(t))\|_{\H}^2\d t\right)^{\frac{1}{2}}\nonumber\\&\quad+ C_T\int_0^T\|\X^n(t)-\y^n(t)\|_{\wi\L^{r+1}}^{r+1}\bigg\}^{\frac{1}{r+1}}e^{\frac{\eta_7^2}{\alpha} T}\nonumber\\&\leq C_T\|\B\W-\B\mathrm{V}\|_{\mathrm{C}([0,T];\V)} \bigg[\left(\int_0^T \|\X^n(s)-\y^n(s)\|_{\H}^2\d s\right)^{\frac{1}{2}} +\left(\int_0^T \|\nabla(\X^n(t)-\y^n(t))\|_{\H}^2\d t\right)^{\frac{1}{2}}\nonumber\\&\quad+ \left(\int_0^T\|\X^n(t)-\y^n(t)\|_{\wi\L^{r+1}}^{r+1}\right)^{\frac{1}{r+1}}+\left(\int_0^T\|\X^n(t)-\y^n(t)\|_{\wi\L^{q+1}}^{q+1}\right)^{\frac{1}{q+1}}\bigg].
\end{align}
By using  the estimates \eqref{est} and \eqref{53}, we find for $r\geq1$ 
\begin{align*}
\left(\int_0^T \|\X^n(s)-\y^n(s)\|_{\H}^2\d s\right)^{\frac{1}{2}}&=\left(\int_0^T \|\X^n(s)-\y^n(s)\|_{\H}^2\d s\right)^{\frac{1}{2}-\frac{1}{r+1}}\left(\int_0^T \|\X^n(s)-\y^n(s)\|_{\H}^2\d s\right)^{\frac{1}{r+1}}
\nonumber\\&\leq C_T\left(\int_0^T \|\X^n(s)-\y^n(s)\|_{\H}^2\d s\right)^{\frac{1}{r+1}},
\end{align*}
and for $r>q$ and an application of H\"older's inequality yields 
\begin{align*}
\left(\int_0^T\|\X^n(t)-\y^n(t)\|_{\wi\L^{q+1}}^{q+1}\d t\right)^{\frac{1}{q+1}}\leq \left(T|\mathbb{T}^d|\right)^{\frac{r-q}{(r+1)(q+1)}}\left(\int_0^T\|\X^n(t)-\y^n(t)\|_{\wi\L^{r+1}}^{r+1}\d t\right)^{\frac{1}{r+1}}.
\end{align*}
Similarly, one can show that 
\begin{align*}
	\left(\int_0^T \|\nabla(\X^n(s)-\y^n(s))\|_{\H}^2\d s\right)^{\frac{1}{2}}\leq C_T\left(\int_0^T \|\nabla(\X^n(s)-\y^n(s))\|_{\H}^2\d s\right)^{\frac{1}{r+1}}.
\end{align*}
Using the above estimates in \eqref{0.12}, we obtain 
\begin{align*}
&\min\left\{\frac{\mu}{2},\frac{\alpha}{2},\frac{\beta}{2^r}\right\}\bigg(\int_0^T\|\nabla(\X^n(s)-\y^n(s))\|_{\H}^2\d s+\int_0^T \|\X^n(s)-\y^n(s)\|_{\H}^2\d s\quad\\&\quad+\int_0^T \|\X^n(s)-\y^n(s)\|_{\wi\L^{r+1}}^{r+1}\d s\bigg) \nonumber\\&\leq C_T\|\B\W-\B\mathrm{V}\|_{\mathrm{C}([0,T];\V)}\bigg[\left(\int_0^T \|\X^n(s)-\y^n(s)\|_{\H}^2\d s\right)^{\frac{1}{r+1}} +\left(\int_0^T \|\nabla(\X^n(t)-\y^n(t))\|_{\H}^2\d t\right) ^{\frac{1}{r+1}}\nonumber\\&\quad+\left(\int_0^T\|\X^n(t)-\y^n(t)\|_{\wi\L^{r+1}}^{r+1}\right) ^{\frac{1}{r+1}}\bigg]\nonumber\\&\leq C_T\|\B\W-\B\mathrm{V}\|_{\mathrm{C}([0,T];\V)} 	\bigg[\int_0^T\|\nabla(\X^n(s)-\y^n(s))\|_{\H}^2\d s+\int_0^T \|\X^n(s)-\y^n(s)\|_{\H}^2\d s\nonumber\\&\quad+\int_0^T \|\X^n(s)-\y^n(s)\|_{\wi\L^{r+1}}^{r+1}\d s\bigg]^{\frac{1}{r+1}}.
\end{align*}
Therefore, it is immediate that 
\begin{align*}
&\left(\int_0^T\|\nabla(\X^n(s)-\y^n(s))\|_{\H}^2\d s+\int_0^T \|\X^n(s)-\y^n(s)\|_{\H}^2\d s+\int_0^T \|\X^n(s)-\y^n(s)\|_{\wi\L^{r+1}}^{r+1}\d s \right)^{\frac{r}{r+1}} \nonumber\\&\leq \frac{C_T}{\min\left\{\frac{\mu}{2},\frac{\alpha}{2},\frac{\beta}{2^r}\right\}}\|\B\W-\B\mathrm{V}\|_{\mathrm{C}([0,T];\V)}. 
\end{align*}
%or one can write 
%\begin{align*}
%	&\int_0^T\|\nabla(\X(s)-\y(s))\|_{\H}^2\d s+\int_0^T \|\X(s)-\y(s)\|_{\H}^2\d s+\int_0^T \|\X(s)-\y(s)\|_{\wi\L^{r+1}}^{r+1}\d s\nonumber\\&\leq \left(\frac{C_T}{\min\left\{\frac{\mu}{2},\frac{\alpha}{2},\frac{\beta}{2^r}\right\}}\right)^{\frac{r+1}{r}}\|\B\W-\B\mathrm{V}\|_{\mathrm{C}([0,T];\V)}^{\frac{r+1}{r}}.
%\end{align*}
Thus from \eqref{0.12}, we finally arrive at 
\begin{align*}
	&\int_0^T\|\nabla(\X^n(s)-\y^n(s))\|_{\H}^2\d s+\int_0^T \|\X^n(s)-\y^n(s)\|_{\H}^2\d s+ \int_0^T \|\X^n(s)-\y^n(s)\|_{\wi\L^{r+1}}^{r+1}\d s\nonumber\\&+\|\Z^n(T)\|_{\H}^2\leq C_T\|\B\W-\B\mathrm{V}\|_{\mathrm{C}([0,T];\V)}^{\frac{r+1}{r}}.
\end{align*}
Therefore, from \eqref{0.22} at $t=T$,  we have 
\begin{align}\label{0.13}
	\|\X^n(T)-\y^n(T)\|_{\H}&\leq\|\Z^n(T)\|_{\H}+\|\B\W(T)-\B\mathrm{V}(T)\|_{\H}
	\nonumber\\&\leq C_T\|\B\W-\B\mathrm{V}\|_{\mathrm{C}([0,T];\V)}^{\frac{r+1}{2r}} +\|\B\W(T)-\B\mathrm{V}(T)\|_{\H}.
\end{align}

\vskip 2mm
\noindent
\textbf{Step 4:} \emph{Irreducibility.}
We calculate
\begin{align}\label{0.14}
	&\|\X(T,\x)-\x_1\|_{\H}\nonumber\\&\leq\|\X(T,\x)-\X^n(T,\x_n)\|_{\H}+\|\X^n(T,\x_n)-\y^n(T,\x_n)\|_{\H}+\|\y^n(T,\x_n)-\y(T,\x)\|_{\H}\nonumber\\&\quad+\|\y(T,\x)-\x_1\|_{\H}.
\end{align}
 By the approximate controllability result (see Theorem \ref{prop31}), for each $\eps>0$, $T>0$ and all $\x,\x_1\in\H$, there exists $\v\in\mathrm{C}([0,T];\U)$ such that 
 \begin{align}\label{approx}
 	\|\y(T,\x)-\x_1\|_{\H}\leq C_T\eps.
 \end{align}
% Moreover, from \eqref{density}, for any $\eps>0$, there exists $n_0\in\N$ such that 
% \begin{align}\label{denlim}
% 	\|\x_n-\x\|_{\H}<\frac{\eps}{2} \  \text{ for all } n\geq n_0.
% \end{align}
Moreover, from the strong convergences \eqref{stc1} and \eqref{stc3}, we find for any $\eps>0$, there exist $n_1,n_2\in\N$ such that 
\begin{equation}\label{stc4}
	\left\{
\begin{aligned}
	\|\y^n(T,\x_n)-\y(T,\x)\|_{\H}&<\frac{\eps}{2}, \   \text{ for all } n\geq n_1 \ \text{ and }\\	\|\X(T,\x)-\X^n(T,\x_n)\|_{\H}&<\frac{\eps}{2}, \  \text{ for all } n\geq n_2.
\end{aligned}
\right.
\end{equation}
 Using \eqref{approx} and \eqref{stc4} in \eqref{0.14}, we deduce 
\begin{align*}
	\|\X(T,\x)-\x_1\|_{\H}\leq C_T\|\B\W-\B\mathrm{V}\|_{\mathrm{C}([0,T];\V)}^{\frac{r+1}{2r}}+\|\B\W(T)-\B\mathrm{V}(T)\|_{\H}+(C_T+1)\eps.
\end{align*}
 Thus, for any $k>0$, $0<\eps<\frac{k}{C_T+1}$ and $n>\max\{n_1,n_2\}$, we have 
\begin{align}\label{irred}
&	\P\left(\|\X(T,\x)-\x_1\|_{\H}\geq k\right)\nonumber\\&\leq \P\left(C_T\|\B\W-\B\mathrm{V}\|_{\mathrm{C}([0,T];\V)}^{\frac{r+1}{2r}}+\|\B\W(T)-\B\mathrm{V}(T)\|_{\H}+(C_T+1)\eps\geq k\right)\nonumber\\&= \P\left(C_T\|\B\W-\B\mathrm{V}\|_{\mathrm{C}([0,T];\V)}^{\frac{r+1}{2r}}+\|\B\W(T)-\B\mathrm{V}(T)\|_{\H}\geq k-(C_T+1)\eps\right)\nonumber\\&\leq \P\left(\|\B\W-\B\mathrm{V}\|_{\mathrm{C}([0,T];\V)}^{\frac{r+1}{2r}}+\|\B\W(T)-\B\mathrm{V}(T)\|_{\H}\geq\frac{k-(C_T+1)\eps}{\max\{C_T,1\}}\right).
\end{align}
From the representation $\sqrt{\Q}\W(t)=\sum\limits_{k=1}^{\infty}\sqrt{\mu_k}\boldsymbol{e}_k\beta_k(t)$, we conclude that $(\sqrt{\Q}\W(\cdot),\sqrt{\Q}\W(T))$ is a Gaussian random variable in $\Lambda:=\mathrm{C}([0,T];\V)\times\H$ (see \cite[Propositions 2.1.6 and 2.1.10]{wlm}). Now, if $(\sqrt{\Q}\W(\cdot),\sqrt{\Q}\W(T))$ is non-degenrate, then one can immediately conclude from \eqref{irred} that $\P\left(\|\X(T,\x_0)-\x_1\|_{\H}\geq k\right)<1$, and the  assertion of the theorem follows  if  we show that $(\sqrt{\Q}\W(\cdot),\sqrt{\Q}\W(T))$ is non-degenerate. 
\vskip 2mm
\noindent
\textbf{Step 5:} \emph{Non-degeneracy}.
Let us take an arbitrary element $\begin{pmatrix} \boldsymbol{\psi} \\ \x\end{pmatrix}$ in $\Lambda$ and let $\mathcal{Q}$ be the covariance operator of the Gaussian random variable $\begin{pmatrix} \sqrt{\Q}\W(t) \\ \sqrt{\Q}\W(T)\end{pmatrix}$. Since, $\E\begin{pmatrix} \sqrt{\Q}\W(t) \\ \sqrt{\Q}\W(T)\end{pmatrix}=0$ for all $t\in[0,T]$, then for any $\begin{pmatrix} \boldsymbol{\psi} \\ \x\end{pmatrix}\in\Lambda$ and $t\in[0,T]$, we calculate
\begin{align}\label{nondeg}
&	\left\langle\mathcal{Q}\begin{pmatrix} \boldsymbol{\psi} \\ \x\end{pmatrix},\begin{pmatrix} \boldsymbol{\psi} \\ \x\end{pmatrix}\right\rangle\nonumber\\&=\E\left[\left|\left\langle\begin{pmatrix} \sqrt{\Q}\W(t) \\ \sqrt{\Q}\W(T)\end{pmatrix},\begin{pmatrix} \boldsymbol{\psi} \\ \x\end{pmatrix}\right\rangle\right|^2\right] \nonumber\\&=\E\left\{\left[\left(\sqrt{\Q}\W(t),\boldsymbol{\psi}(t)\right)+\left(\sqrt{\Q}\W(T),\x\right)\right]^2\right\}\nonumber\\&=\E\left[\left(\sqrt{\Q}\W(t),\boldsymbol{\psi}(t)\right)^2+\left(\sqrt{\Q}\W(T),\x\right)^2+2\left(\sqrt{\Q}\W(t),\boldsymbol{\psi}(t)\right)\left(\sqrt{\Q}\W(T),\x\right)\right].
\end{align}
Since, $\E\left[\left(\sqrt{\Q}\W(t),\boldsymbol{\psi}(t)\right)^2\right]=t\left(\Q\boldsymbol{\psi}(t),\boldsymbol{\psi}(t)\right)$, $\E\left[\left(\sqrt{\Q}\W(T),\x\right)^2\right]=T\left(\Q\x,\x\right)$ and 
\begin{align*}
	\E\left[\left(\sqrt{\Q}\W(t),\boldsymbol{\psi}(t)\right)\left(\sqrt{\Q}\W(T),\x\right)\right]=(t\land T)\left(\Q\boldsymbol{\psi}(t),\x\right).
\end{align*}
Therefore, it implies from \eqref{nondeg} that 
\begin{align}\label{nondeg1}
 \left\langle\mathcal{Q}\begin{pmatrix} \boldsymbol{\psi} \\ \x\end{pmatrix},\begin{pmatrix} \boldsymbol{\psi} \\ \x\end{pmatrix}\right\rangle&=t\left(\Q\boldsymbol{\psi}(t),\boldsymbol{\psi}(t)\right)+T\left(\Q\x,\x\right)+2t\left(\Q\boldsymbol{\psi}(t),\x\right)\nonumber\\&=\left\langle\begin{pmatrix} t\Q\boldsymbol{\psi}(t)+2t\Q\x \\ T\Q\x\end{pmatrix},\begin{pmatrix} \boldsymbol{\psi} \\ \x\end{pmatrix}\right\rangle,
\end{align}
for all $\begin{pmatrix} \boldsymbol{\psi} \\ \x\end{pmatrix}\in\Lambda$. 
Let $\begin{pmatrix} \tilde{\boldsymbol{\psi}}\\ \tilde{\x}\end{pmatrix}\in\Lambda$ be such that $\mathcal{Q}\begin{pmatrix} \tilde{\boldsymbol{\psi}}\\ \tilde{\x}\end{pmatrix}=0.$ Then, from \eqref{nondeg1}, it implies for all $t\in[0,T]$
\begin{align*}
	t\Q\tilde{\boldsymbol{\psi}}(t)+2t\Q\tilde{\x}=0\ \text{ and } \ T\Q\tilde{\x}=0.
\end{align*}
Since $\Q$ is non-degenerate, the last equation in the above gives $\tilde{\x}=0$.  Consequently the first equation in above implies that $\tilde{\boldsymbol{\psi}}(t)=0$ for all $t\in [0,T]$. This, completes the proof that the random variable $(\sqrt{\Q}\W(\cdot),\sqrt{\Q}\W(T))$ is non-degenerate. 
\end{proof}
\subsection{Accessibility}
Let us define $\mathbb{X}:=\left\{\X(t,\x):t\geq 0, \x\in\H\right\}$, where $\X(\cdot,\x),$  is the unique strong solution (in the probability sense) to the problem \eqref{scbf} with the initial data $\x\in\H$. Then $\mathbb{X}$ is said to be \emph{accessible} to $\x_0\in\H$  if the resolvent $\mathrm{R}_{\lambda}$, $\lambda>0$ satisfies 
$$\mathrm{R}_{\lambda}(\x,\mathcal{U}):=\lambda\int_0^{\infty}e^{-\lambda t}\mathbb{P}\left\{\X(t,\x)\in\mathcal{U}\right\}\d t>0,$$ for all $\x\in\H$ and all neighborhoods $\mathcal{U}$ of $\x_0$, where $\lambda>0$ is arbitrary (\cite{JWHYJZ}). An immediate consequence of Theorem \ref{mainresult} is that 
\begin{corollary}\label{cor5.2}
	Under the assumptions of Theorem \ref{mainresult}, $\mathbb{X}$ is accessible to $\x_0\in\H$.  
\end{corollary}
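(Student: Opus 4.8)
The plan is to deduce accessibility as a direct corollary of the irreducibility established in Theorem \ref{mainresult}. First I would recall that irreducibility gives, for every $t>0$, every $\x\in\H$, and every open set $\mathscr{U}\subseteq\H$, the strict positivity of the transition probability $\mathrm{P}_t(\x,\mathscr{U})=\mathbb{P}\{\X(t,\x)\in\mathscr{U}\}>0$. Given a neighborhood $\mathcal{U}$ of $\x_0$, it contains an open neighborhood $\mathcal{U}'\ni\x_0$, and by monotonicity of measures $\mathbb{P}\{\X(t,\x)\in\mathcal{U}\}\geq\mathbb{P}\{\X(t,\x)\in\mathcal{U}'\}$, so it suffices to treat the case where $\mathcal{U}$ itself is open.

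Next I would examine the integrand defining the resolvent. Set $g(t):=e^{-\lambda t}\,\mathbb{P}\{\X(t,\x)\in\mathcal{U}\}$ for $t\geq 0$. Since $\X(\cdot,\x)$ is an $(\mathscr{F}_t)$-adapted process with $\mathbb{P}$-a.s. continuous paths in $\H$ (Definition \ref{def4.2} and Theorem \ref{regularity}), the map $t\mapsto\mathrm{P}_t(\x,\mathcal{U})$ is Borel measurable, and hence so is $g$. The function $g$ is nonnegative on $[0,\infty)$, and by the irreducibility recalled above it is \emph{strictly} positive for every $t>0$. Consequently $g>0$ Lebesgue-almost everywhere on $(0,\infty)$, which forces $\int_0^{\infty}g(t)\,\d t>0$.

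Finally, since $\lambda>0$, this yields
\begin{align*}
	\mathrm{R}_{\lambda}(\x,\mathcal{U})=\lambda\int_0^{\infty}e^{-\lambda t}\,\mathbb{P}\{\X(t,\x)\in\mathcal{U}\}\,\d t>0,
\end{align*}
for all $\x\in\H$, all neighborhoods $\mathcal{U}$ of $\x_0$, and every $\lambda>0$, which is precisely the assertion that $\mathbb{X}$ is accessible to $\x_0$. The argument is essentially immediate once irreducibility is in hand; the only point requiring a word of justification is the Borel measurability of $t\mapsto\mathrm{P}_t(\x,\mathcal{U})$, which I would obtain from the pathwise continuity and progressive measurability of the strong solution, so there is no substantial obstacle to overcome here.
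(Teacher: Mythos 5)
Your proposal is correct and follows exactly the route the paper intends: the paper offers no written proof, simply declaring the corollary ``an immediate consequence'' of Theorem \ref{mainresult}, and your argument is precisely that consequence spelled out — irreducibility makes the integrand $e^{-\lambda t}\,\mathbb{P}\{\X(t,\x)\in\mathcal{U}\}$ strictly positive for all $t>0$, so the resolvent integral is positive. Your extra care about reducing to open neighborhoods and about Borel measurability of $t\mapsto\mathrm{P}_t(\x,\mathcal{U})$ (via path continuity and progressive measurability from Definition \ref{def4.2}) is sound and only fills in routine details the paper leaves implicit.
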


\medskip\noindent
\textbf{Acknowledgments:} The first author would like to thank Ministry of Education, Government of India - MHRD for financial assistance. M. T. Mohan would  like to thank  the Science $\&$ Engineering Research Board (SERB),  Department of Science and Technology (DST), Govt. of India for a MATRICS grant (MTR/2021/000066). The first author would like to thank Dr. Kush Kinra and Mr. Ankit Kumar for helpful discussions and insightful comments.

\end{document}